\setlist[enumerate]{noitemsep}
\setlist[description]{noitemsep}
\setlist[itemize]{noitemsep}
\theoremstyle{plain}
\newtheorem{theorem}{Theorem}
\newtheorem{proposition}{Proposition}
\newtheorem{lemma}{Lemma}
\theoremstyle{definition}
\newtheorem{definition}{Definition}
\theoremstyle{remark}
\newtheorem{example}{Example}
\DeclareMathAlphabet{\mathpzc}{OT1}{pzc}{m}{it}
\newcommand{\tildegG}{\,\tilde{\gG}}
\newcommand{\ingraph}[1]{~\textnormal{\bf in}\,#1}
\newcommand{\textnot}{\textnormal{\bf not}~}
\newcommand{\underdist}[1]{~\textnormal{\bf under}\,#1}
\newcommand{\DMG}{\mathbb{G}}
\newcommand{\DMGc}{\mathbb{G}^{\ast}}
\newcommand{\DiG}{\mathbb{G}^{\ast}_{\textnormal{D}}}
\newcommand{\ADMG}{\mathbb{G}_{\textnormal{A}}}
\newcommand{\ADMGc}{\mathbb{G}^{\ast}_{\textnormal{A}}}
\newcommand{\DAG}{\mathbb{G}^{\ast}_{\textnormal{DA}}}
\newcommand{\UG}{\mathbb{UG}}
\newcommand{\BDG}{\mathbb{G}^{\ast}_{\textnormal{B}}}
\newcommand{\UnDMGc}{\mathbb{G}_{\textnormal{U}}^{\ast}}
\newcommand{\UnADMGc}{\mathbb{G}_{\textnormal{UA}}^{\ast}}
\newcommand{\statmodel}{\mathbb{P}}
\newcommand{\statmodelGM}{\mathbb{P}_{\textnormal{GM}}}
\newcommand{\statmodelUM}{\mathbb{P}_{\textnormal{UM}}}
\newcommand{\statmodelE}{\mathbb{P}_{\textnormal{E}}}
\newcommand{\statmodelNE}{\mathbb{P}_{\textnormal{NE}}}
\newcommand{\statmodelPE}{\mathbb{P}_{\textnormal{PE}}}
\newcommand{\statmodelCE}{\mathbb{P}_{\textnormal{CE}}}
\newcommand{\statmodelLM}{\mathbb{P}_{\textnormal{LM}}}
\newcommand{\statmodelNM}{\mathbb{P}_{\textnormal{NM}}}
\newcommand{\statmodelF}{\mathbb{P}_{\textnormal{F}}}
\newcommand{\statmodelEF}{\mathbb{P}_{\textnormal{EF}}}
\newcommand{\statmodelA}{\mathbb{P}_{\textnormal{A}}}
\newcommand{\causalmodel}{\mathbb{CP}}
\let\P\relax
\DeclareMathOperator{\P}{\mathsf{P}} 
\let\Q\relax
\DeclareMathOperator{\Q}{\mathsf{Q}} 
\let\p\relax
\DeclareMathOperator{\p}{\mathsf{p}} 
\let\q\relax
\DeclareMathOperator{\q}{\mathsf{q}} 
\DeclareMathOperator{\marg}{margin}
\DeclareMathOperator{\fix}{fix}
\DeclareMathOperator{\augg}{augment}
\DeclareMathOperator{\expand}{expand}
\DeclareMathOperator{\expandC}{expand_C}
\DeclareMathOperator{\expandP}{expand_P}
\DeclareMathOperator{\expandN}{expand_N}
\DeclareMathOperator{\gG}{G}
\DeclareMathOperator{\sE}{\mathcal{E}}
\DeclareMathOperator{\sD}{\mathcal{D}}
\DeclareMathOperator{\sB}{\mathcal{B}}
\DeclareMathOperator{\sI}{\mathcal{I}}
\DeclareMathOperator{\sJ}{\mathcal{J}}
\DeclareMathOperator{\sK}{\mathcal{K}}
\DeclareMathOperator{\sL}{\mathcal{L}}
\DeclareMathOperator{\sM}{\mathcal{M}}
\DeclareMathOperator{\an}{\mathrm{an}}
\DeclareMathOperator{\de}{\mathrm{de}}
\DeclareMathOperator{\nd}{\mathrm{nd}}
\DeclareMathOperator{\pa}{\mathrm{pa}}
\DeclareMathOperator{\ch}{\mathrm{ch}}
\DeclareMathOperator{\mb}{\mathrm{mb}}
\DeclareMathOperator{\mbg}{\mathrm{mbg}}
\DeclareMathOperator{\dis}{\mathrm{dis}}
\newcommand\independent{\protect\mathpalette{\protect\independenT}{\perp}}
\def\independenT#1#2{\mathrel{\rlap{$#1#2$}\mkern2mu{#1#2}}}
\providecommand{\leftsquigarrow}{%
  \mathrel{\mathpalette\reflect@squig\relax}%
}
\newcommand{\reflect@squig}[2]{%
  \reflectbox{$\m@th#1\rightsquigarrow$}%
}
\author{Qingyuan Zhao\thanks{Statistical Laboratory, University of Cambridge,
    qyzhao@statslab.cam.ac.uk.}}
\date{\today}
\title{On statistical and causal models associated with acyclic
  directed mixed graphs}
\begin{document}

\maketitle
\begin{abstract}
  Causal models in statistics are often described using acyclic directed
  mixed graphs (ADMGs), which contain directed and bidirected edges
  and no directed cycles. This article surveys various interpretations
  of ADMGs, discusses their relations in different sub-classes of
  ADMGs, and argues that one of them---the noise expansion (NE)
  model---should be used as the default interpretation.  Our endorsement of the NE model is based on two
  observations. First, in a subclass of ADMGs called
  unconfounded graphs (which retain most of the good properties of
  directed acyclic graphs and bidirected graphs), the NE model is
  equivalent to many other interpretations including the global Markov
  and nested Markov models. Second, the NE model for an arbitrary ADMG is
  exactly the union of that for all unconfounded expansions of that
  graph. This property is referred to as \emph{completeness}, as it
  shows that the model does not commit to any specific latent variable
  explanation. In proving that the NE model is nested Markov, we
  also develop an ADMG-based theory for causality. Finally, we compare
  the NE model with the closely related but different interpretation
  of ADMGs as directed acyclic graphs (DAGs) with latent variables
  that is commonly used in the literature. We argue that the ``latent
  DAG'' interpretation is mathematically unnecessary, makes obscure
  ontological assumptions, and discourages practitioners from
  deliberating over important structural assumptions.
\end{abstract}


\section{Introduction}
\label{sec:introduction}

Acyclic directed mixed graphs (ADMGs) are first used by
\textcite{wright34_method_path_coeff} to describe causal relationships
between a collection of random variables. They play a central role in
the modern statistical theory for causality; see, for example,
\textcite{pearlCausality2009} 
and
\textcite{richardsonNestedMarkovProperties2023}, although Pearl uses a
different terminology.
ADMGs have two types of edges---directed and bidirected. When
interpreting model assumptions encoded by ADMGs, two heuristics are
commonly used:
\begin{enumerate}
\item A directed edge means direct causal influence and a bidirected
  edge means exogenous correlation
  (\textcite{wright34_method_path_coeff} calls this ``residual
  correlation'').
\item ADMG describes a latent variable model because, in the
  definition of ``latent projection'' of ADMGs by
  \textcite{vermaEquivalenceSynthesisCausal1990}, the graphical
  structures
  \[
    V_1 \ldedge V_2 \rdedge V_3,~V_1 \bdedge V_2 \rdedge
    V_3,~\text{and}~V_1 \ldedge V_2 \bdedge V_3
  \]
  all marginalize to $V_1 \bdedge V_3$ when we treat $V_2$ as
  unobserved.
\end{enumerate}

Based on these heuristics, many interpretations of ADMGs have been
proposed in the literature \parencite[see
e.g.][]{richardsonMarkovPropertiesAcyclic2003,petersElementsCausalInference2017,bareinboimPearlHierarchyFoundations2022}.
Unfortunately, these interpretations generally do not agree with each
other, and it is notoriously
difficult to describe the complicated constraints imposed by the
latent variables on the probability distribution of the observed
variables. The purpose of this article is to
give a survey of those interpretations of ADMGs, discuss their
relations, and put forward a case that one of those
interpretations---the nonparametric equation system (the E model
below)---should be used as the default. 
A key
argument is that the E model is \emph{complete} with respect to certain
latent variable explanations, a concept that will be defined shortly.


Before moving to any technical discussion, it is useful to first
consider in what sense we can claim an interpretation is more natural
than others.
Generally speaking, an interesting mathematical definition
can be found in at least two ways:
\begin{description}
\item[Equivalence] When many definitions motivated by apparently different
  considerations are equivalent to each other, we may believe they
  describe a natural mathematical concept.
\item[Completion] When there exists a natural definition for a smaller class of
  mathematical objects, we may try to find a ``completion'' of that
  definition to a larger class of objects.
\end{description}
In fact, the Equivalence argument is regularly used in the graphical models
literature. A prominent example is the Hammersley-Clifford theorem,
which shows that two statistical models (of distributions with
positive densities) associated with a undirected
graph---one defined via factorization and another via Markov
property---are equivalent \parencite[Theorem
3.9]{lauritzenGraphicalModels1996}. Another familiar example is the
equivalence of
the factorization model (``Bayesian networks'') and the global Markov
model associated with directed acyclic graphs (DAGs)
\parencite[Theorem 3.27]{lauritzenGraphicalModels1996}. However, the
Equivalence argument by itself cannot define the ``right''
interpretation of ADMGs. In fact, we will see shortly that most common
interpretations of ADMGs in statistics are genuinely different. Given
this, one may be tempted to use the Completion argument instead. We will
see below that this is indeed possible but requires a careful
definition of completeness. 

\subsection{Directed mixed graphs}
\label{sec:direct-mixed-graphs}

A directed mixed graph $\gG = (V, \sD, \sB)$ consists of a vertex set
$V$, a set $\sD \subseteq V \times
V$ of \emph{directed edges}, and a set $\sB \subseteq V \times V$ of
\emph{bidirected edges} that are required to be symmetric:
\[
  (V_j,V_k) \in \sB \Longleftrightarrow (V_k,V_j) \in \sB,~\text{for
    all}~V_j,V_k \in V.
\]
It is helpful to think about the edges as relations between the
vertices and write
\[
  V_j \rdedge V_k \ingraph{\gG} \Longleftrightarrow (V_j,V_k) \in
  \sD\quad\text{and}\quad V_j \bdedge V_k \ingraph{\gG}
  \Longleftrightarrow (V_j,V_k) \in \sB.
\]
The choice of drawing edges in $\sB$ as bidirected instead of
undirected is intentional and crucial. This is also where the name
``directed mixed graph'' comes from
\parencite{richardsonMarkovPropertiesAcyclic2003}. Let $\DMG(V)$
denote
the set of all such graphs. Note that loops, whether bidirected (such
as $V_j \bdedge V_j$) or
directed (such as $V_j \rdedge V_j$), are allowed. For most of this
article, we will work with graphs that contain all bidirected
loops, that is, $V_j \bdedge V_j \ingraph{\gG}$ for all
$V_j \in V$.\footnote{Loosely speaking, this means that we allow
  ``random innovations'' at each vertex in the corresponding statistical
  models.} Let $\DMGc(V)$ denote the collection of all such
``canonical'' graphs.

Some important subclasses of $\DMGc(V)$ include:
\begin{itemize}
\item $\BDG(V)$: the class of bidirected graphs (i.e.\ the directed
  edge set $\sD = \emptyset$);
\item $\DiG(V)$: the class of directed graphs that contain no
  bidirected edges other than bidirected loops;
\item $\ADMGc(V)$: the class of acyclic directed mixed graphs (ADMGs),
  where by \emph{acyclic}, we mean there exists no cyclic directed
  walks like $V_j \rdedge \dots \rdedge V_j$ for any $V_j \in V$;
\item $\DAG(V) = \DiG(V) \cap \ADMGc(V)$: the class of directed
  acyclic graphs (DAGs).
\end{itemize}

It is convenient to not actually draw the bidirected loops for
graphs in $\DMGc(V)$. Indeed, this defines an isomorphism from
$\DMGc(V)$ that contains \emph{all} bidirected loops to the subclass
of $\DMG(V)$ that contains \emph{no} bidirected loops. For this
reason, we will generally not distinguish between these two subclasses
in this article.\footnote{One might ask why we do not start with graphs without
  bidirected loops in the first place. This is mainly because in some
  problems (not considered here) it is useful to consider graphs in
  which some vertices have bidirected loops and some do not. One example is
  the single-world intervention graphs
  \parencite{richardson2013single}.}

Let us introduce a new subclass of $\DMGc(V)$ that will play an
important role in our argument below.

\begin{definition}
  Given $\gG \in \DMGc(V)$, the set of \emph{exogenous} vertices is
  defined as
  \[
    E = \{V_j \in V: V_k \nordedge V_j~\text{for all}~V_k \in
    V\}.
  \]
  We say $\gG$ is \emph{unconfounded} if for all $V_j, V_k \in V$ such
  that $V_j \neq V_k$, we have
  \begin{equation*}
    V_j \bdedge V_k \ingraph{\gG} \Longrightarrow V_j, V_k \in
    E.
  \end{equation*}
  Let $\UnDMGc(V)$ denote the set of all such unconfounded
  graphs with vertex set $V$ and $\UnADMGc(V) = \UnDMGc(V) \cap
  \ADMGc(V)$.
\end{definition}

The semantics of a unconfounded ADMG is simple: the exogenous vertices
have some underlying structure described by the bidirected edges, and
they influence the rest of the \emph{endogenous} vertices in a
recursive way through the directed edges.
The name ``unconfounded''
is derived from the fact that when such graphs are interpreted
causally, all interventional distributions can be identified from the
distribution of $V$ because all vertices in the graph are ``fixable'';
see \Cref{sec:causal-model} for more detail.
It is
obvious that this subclass contains DAGs and bidirected graphs:
$\DAG(V) \subseteq
\UnADMGc(V)$ and $\BDG(V) \subseteq \UnADMGc(V)$. We will see shortly
that unconfounded ADMGs share many good properties as DAGs and
bidirected graphs.  

Note that a similar but different type of graphs is considered by
\textcite{kiiveriRecursiveCausalModels1984}. There, the exogenous
variables are connected by undirected edges and are required to
satisfy the global Markov property for undirected graphs, so what they
consider is a subclass of the chain graph models
\parencite{lauritzenGraphicalModelsAssociations1989,frydenbergChainGraphMarkov1990}.

\subsection{Statistical models associated with ADMGs and their relations}

In graphical statistical models, vertices in the graph are random
variables, and there are different ways to interpret the edges as
relationships between the variables. 
To formalize such interpretations as statistical models, it is helpful
to take the more
abstract point of view that a statistical model is a collection of
probability distributions. Let $V =
(V_1,\dots,V_d)$ be a random vector that takes values in a product
measure space $\mathbb{V} =\mathbb{V}_1 \times \dots \times
\mathbb{V}_d$. The largest statistical model that we will consider is the
set of all probability distributions on $\mathbb{V}$ with a density
function, denoted as $\mathbb{P}(\mathbb{V})$. With the
possible addition of some regularity (e.g.\ smoothness)
conditions on the density function, this is often referred to as the
\emph{nonparametric model} in the statistics literature. 

Graphical statistical models associate graphs with subclasses of
$\mathbb{P}(\mathbb{V})$; in other words, they are maps from
$\ADMGc(V)$ to the power set of $\mathbb{P}(\mathbb{V})$. Let us
illustrate this by introducing some common ADMG models here:

\begin{enumerate}
\item One approach is to associate certain separating relations in
  the graph with conditional independences in the probability
  distribution. For example, for $\gG
  \in \DMGc(V)$, the \emph{global Markov} (GM) model collects all
  distributions $\P \in \statmodel(\mathbb{V})$ that obeys the global
  Markov property with respect to $\gG$: m-separation in $\gG$ (this
  will be defined in \Cref{sec:walk-algebra}) implies conditional
  independence under $\P$. This is first formally introduced by
  \textcite{richardsonMarkovPropertiesAcyclic2003} but goes back to
  investigations of (cyclic) linear structural equation models in the
  previous decade.
\item Another approach is to consider certain ``expansions'' of the
  graph that have a simpler structure. For example, the \emph{clique
    expansion} (CE) model expands every clique of bidirected edges with
  a latent variable, so the resulting graph is a DAG. The \emph{noise
    expansion} (NE) model associate each vertex in the graph with a
  latent variable that inherits all its bidirected edges, so the
  resulting graph is unconfounded. See \Cref{fig:examples} below for some
  examples. Many authors take this approach implicitly without fully
  defining their model; a more explicit account is given in
  \textcite[Section 4.1]{richardsonNestedMarkovProperties2023}.
\item Alternatively, one can consider a system of equations that obey
  the local structure of the graph. The
  \emph{nonparametric system of equations} (E) model 
  collects all distribution $\P$ of $V$ such that $V$ can be written as
  (the following event has probability $1$ under $\P$):
  \[
    V_j = f_j(V_{\pa(j)}, E_j),\quad\text{for all}~V_j \in V,
  \]
  for some functions $f_1,\dots,f_d$, where $\pa(j) = \{k: V_k \rdedge
  V_j \ingraph{\gG}\}$ is the parent
  set of $V_j$ in $\gG$ and, importantly, the distribution of the
  ``noise variables'' $E = (E_1,\dots,E_d)$ is global Markov with
  respect to the bidirected component of $\gG$. This is closely related
  to the ``semi-Markovian'' causal model in \textcite[p.\
  30]{pearlCausality2009} and
  \textcite{bareinboimPearlHierarchyFoundations2022} who leave the
  distribution of $E$ unspecified.
\end{enumerate}

\begin{figure}[t]
  \centering
  \begin{subfigure}[b]{0.35\linewidth}
    \[
      \begin{tikzcd}[row sep=small, column sep=small]
        \textnormal{PE} \arrow[d, Rightarrow] & & \\
        \textnormal{CE} \arrow[d, Rightarrow] & & \\
        \textnormal{NE} \arrow[d, Rightarrow] \arrow[r, Leftrightarrow] & \textnormal{E} & \\
        \textnormal{NM} \arrow[d, Rightarrow] & & \\
        \textnormal{LM} \arrow[r, Leftrightarrow] \arrow[d, Rightarrow] & \textnormal{GM}
        \arrow[r, Leftrightarrow] & \textnormal{A}
        \\
        \textnormal{UM} & &
      \end{tikzcd}
    \]
    \caption{$\gG$ is an ADMG: $\gG \in \ADMGc(V)$.}
    \label{fig:relation-statmodel-admg}
  \end{subfigure} \qquad
  \begin{subfigure}[b]{0.55\linewidth}
    \[
      \begin{tikzcd}[row sep=small, column sep=small]
        \textnormal{PE} \arrow[d, Rightarrow] & & \\
        \textnormal{CE} \arrow[d, Rightarrow] & & \\
        \textnormal{NE} \arrow[r, Leftrightarrow] \arrow[d, Rightarrow] &
        \textnormal{E} \arrow[r, Leftrightarrow] & \textnormal{NM} \arrow[r,
        Leftrightarrow] & \textnormal{EF} \arrow[r,
        Leftrightarrow] & \textnormal{LM} \arrow[r, Leftrightarrow] & \textnormal{GM}
        \arrow[r, Leftrightarrow] & \textnormal{A}  \\
        \textnormal{UM}
      \end{tikzcd}
    \]
    \caption{$\gG$ is an unconfounded ADMG: $\gG \in \UnADMGc(V)$.}
    \label{fig:relation-statmodel-unconfounded}
  \end{subfigure}

  \vspace{20pt}

  \begin{subfigure}[b]{0.8\linewidth}
    \[
      \begin{tikzcd}[row sep=small, column sep=small]
        \textnormal{PE} \arrow[r, Leftrightarrow] \arrow[d, Rightarrow] &
        \textnormal{CE} \arrow[r,
        Leftrightarrow] &
        \textnormal{NE} \arrow[r,
        Leftrightarrow] & \textnormal{E} \arrow[r,
        Leftrightarrow] & \textnormal{NM} \arrow[r,
        Leftrightarrow] & \textnormal{EF} \arrow[r,
        Leftrightarrow] & \textnormal{F} \arrow[r,
        Leftrightarrow] & \textnormal{LM} \arrow[r, Leftrightarrow] & \textnormal{GM}
        \arrow[r, Leftrightarrow] & \textnormal{A}  \\
        \textnormal{UM}
      \end{tikzcd}
    \]
    \caption{$\gG$ is a DAG: $\gG \in \DAG(V)$.}
    \label{fig:relation-statmodel-dag}
  \end{subfigure}

  \vspace{20pt}

  \begin{subfigure}[b]{0.8\linewidth}
    \[
      \begin{tikzcd}[row sep=small, column sep=small]
        \textnormal{PE} \arrow[d, Rightarrow] & & \\
        \textnormal{CE} \arrow[d, Rightarrow] & & \\
        \textnormal{NE} \arrow[r, Leftrightarrow] & \textnormal{E}
        \arrow[r, Leftrightarrow] & \textnormal{NM} \arrow[r,
        Leftrightarrow] & \textnormal{EF} \arrow[r,
        Leftrightarrow] & \textnormal{LM} \arrow[r, Leftrightarrow] & \textnormal{GM}
        \arrow[r, Leftrightarrow] & \textnormal{A}  \arrow[r,
        Leftrightarrow] & \textnormal{UM}
      \end{tikzcd}
    \]
    \caption{$\gG$ is a bidirected graph: $\gG \in \BDG(V)$.}
    \label{fig:relation-statmodel-bdg}
  \end{subfigure}
  \caption{Relations between some statistical models associated with (subclasses
    of) ADMGs that are formally defined in
    \Cref{sec:stat-models-assoc}. 
    (A:
    Augmentation; CE: Clique Expansion; E: Nonparametric
    Equations; EF: Exogenous Factorization; F: Factorization; GM: Global Markov; LM: Local
    Markov; NE: Noise Expansion; NM: Nested Markov; PE: Pairwise
    Expansion; UM: Unconditional Markov.)}
  \label{fig:relation-statmodel}
\end{figure}
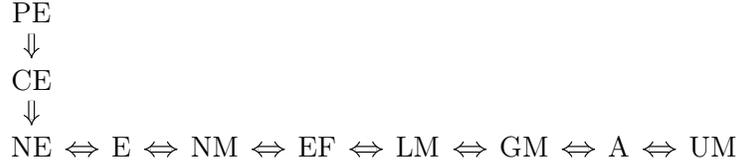

We will formally define the above models and some other
interpretations of ADMGs in \Cref{sec:stat-models-assoc}.

The next Theorem summarizes the relations between those models. 
Many results in this Theorem are already obtained in the
literature. Among the new claims, the most non-trivial result is that
the E/NE model is nested Markov (NM), although this is not totally
surprising given that \textcite[Section
4.1]{richardsonNestedMarkovProperties2023} have shown that the
marginal of any DAG model is nested Markov with respect to the
corresponding marginal ADMG (which basically means $\text{CE}
\Rightarrow \text{NM}$ in our terminology). We prove
$\text{E/NE} \Rightarrow \text{NM}$ by considering a causal Markov
model associated with ADMGs, and this proof is outlined in
\Cref{sec:causal-model}. All other proofs can be found in the
Appendix.

\begin{theorem} \label{thm:relation-statmodel}
  The relations in \Cref{fig:relation-statmodel} hold for all $\gG$ in
  the corresponding classes of graphs, where
  $\Rightarrow$ ($\Leftrightarrow$) should be interpreted as
  $\supseteq$ ($=$) for the corresponding graphical statistical
  models with the same state space $\mathbb{V}$. Moreover,
  all $\Rightarrow$ in \Cref{fig:relation-statmodel} are strict in the
  sense that the reverse implications are not true for some
  graphs in the corresponding subclasses.
\end{theorem}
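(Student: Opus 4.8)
The statement is really a compendium of set relations among eleven models, so the plan is to verify the diagram one arrow at a time and then recover the three special panels as collapses of the general one. For a general ADMG (panel~(a)) I would isolate each link as its own lemma, grouping them into four kinds: the expansion inclusions relating $\statmodelPE$, $\statmodelCE$ and $\statmodelNE$; the equality $\statmodelNE=\statmodelE$; the purely separation-based identities and inclusions among $\statmodelNM$, $\statmodelLM$, $\statmodelGM$, $\statmodelA$ and $\statmodelUM$; and the single new and hard edge $\statmodelNE\subseteq\statmodelNM$. The panels~(b)--(d) then follow once the extra reverse inclusions that turn some of these links into equalities are supplied, together with the classical DAG factorization theory; strictness is obtained at the end by producing, for each strict arrow, a graph in the relevant class together with a distribution witnessing that the reverse inclusion fails.

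Several of these edges are routine or already in the literature. The expansion inclusions follow from the observation that a coarser latent structure can emulate a finer one: a single clique latent can carry the joint vector of the pairwise latents of the edges it covers, and the per-vertex noise variables of the noise expansion---whose joint law is allowed to range over all distributions global Markov to the bidirected component---subsume any common-cause latent, so each step is a map of expansion graphs that only enlarges the attainable set of $V$-marginals. The equality $\statmodelNE=\statmodelE$ is a matter of unfolding definitions, since the structural equations $V_j=f_j(V_{\pa(j)},E_j)$ with $E$ global Markov to the bidirected component are exactly the structural equations of the unconfounded noise expansion. Finally, $\statmodelLM=\statmodelGM=\statmodelA$ and $\statmodelNM\subseteq\statmodelGM$ are known from \textcite{richardsonMarkovPropertiesAcyclic2003,richardsonNestedMarkovProperties2023}, and $\statmodelGM\subseteq\statmodelUM$ is immediate because the unconditional m-separation statements form a subset of all m-separation statements.

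The one genuinely new and delicate edge is $\statmodelNE\subseteq\statmodelNM$---that every nonparametric-equation distribution obeys the nested Markov property---and this is the main obstacle, which is why a causal detour rather than a direct algebraic check is needed. The plan is to give the equation system an interventional semantics and then show that the graphical \emph{fixing} operation defining the nested Markov model coincides, on any distribution in $\statmodelE$, with an identifiable intervention: when a vertex is fixable, dividing the current kernel by the conditional law of that vertex given its Markov blanket returns exactly the post-intervention kernel of the equation system, which is well defined precisely because the noise law is global Markov to the bidirected component. Iterating over a valid fixing sequence reproduces the nested factorization over intrinsic sets, and the conditional independences demanded at each reachable subgraph follow from the induced independences among the remaining noise variables. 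The crux is establishing that fixing is consistent---order-independent and in agreement with the structural intervention---for an arbitrary ADMG; this should be stated and proved as its own proposition in the causal section and then simply invoked here.

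For strictness I would exhibit minimal witnesses of the appropriate type. The bidirected triangle separates $\statmodelPE$ from $\statmodelCE$, since a three-way common cause can force $V_1=V_2=V_3$ whereas independent pairwise latents cannot; a bidirected graph whose clique common-cause model obeys inequality constraints absent from the general covariance-graph noise of $\statmodelNE$ separates $\statmodelCE$ from $\statmodelNE$; an instrumental- or Bell-type graph separates $\statmodelNE$ from $\statmodelNM$ through an inequality constraint that the latent model satisfies but the nested Markov model, capturing only equality constraints, does not; the Verma graph separates $\statmodelNM$ from $\statmodelGM$ via a dormant (Verma) constraint not implied by m-separation; and $\statmodelGM\subsetneq\statmodelUM$ is witnessed by any genuinely conditional independence such as $V_1\independent V_3\mid V_2$ along $V_1\rdedge V_2\rdedge V_3$. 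The collapses in the remaining panels come from the reverse inclusions: on an unconfounded graph every global Markov distribution can be turned into a compatible equation system, giving $\statmodelGM\subseteq\statmodelNE$, which closes the cycle $\statmodelNE\subseteq\statmodelNM\subseteq\statmodelGM\subseteq\statmodelNE$ and collapses the whole chain; on a DAG all latents are vacuous and classical theory (\textcite{lauritzenGraphicalModels1996}) collapses factorization, local and global Markov to the DAG model; and on a bidirected graph the noise expansion already realizes the full covariance-graph model while every m-separation reduces to an unconditional one, so $\statmodelNE=\statmodelGM=\statmodelUM$. In all three panels the expansion links $\statmodelPE\subseteq\statmodelCE\subseteq\statmodelNE$ remain strict.
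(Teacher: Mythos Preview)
Your overall plan matches the paper's: decompose into per-arrow lemmas, cite \textcite{richardsonMarkovPropertiesAcyclic2003,richardsonNestedMarkovProperties2023} for the Markov-property equivalences, prove $\statmodelE\subseteq\statmodelNM$ by lifting to the causal model and showing that fixing a fixable vertex computes an interventional kernel, and recover panels (b)--(d) by supplying reverse inclusions. Your strictness witnesses are also essentially the paper's (bidirected triangle; Bell-type bidirected graph; IV/Balke--Pearl; Verma; a DAG chain/fork).

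Two points need correction. First, $\statmodelNE=\statmodelE$ is not merely ``unfolding definitions''. The NE model is $\marg_V$ of distributions that are \emph{global Markov} with respect to $\expandN(\gG)$; nothing in that definition hands you structural equations. The paper obtains $\statmodelNE\subseteq\statmodelE$ by first using $\text{GM}=\text{EF}$ on the unconfounded graph $\expandN(\gG)$ to get an exogenous factorization, and then applying conditional quantile transforms to manufacture deterministic functions $f_j$ and noise variables in $[0,1]$ (with a bijection $[0,1]^2\to[0,1]$ to fold the auxiliary uniforms back into a single $E_j$). The reverse inclusion $\statmodelE\subseteq\statmodelNE$ likewise goes through $\text{EF}=\text{GM}$ on the unconfounded noise-expansion graph. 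You should plan for these as genuine lemmas, not definitional.

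Second, your last sentence is wrong: the expansion links $\statmodelPE\subseteq\statmodelCE\subseteq\statmodelNE$ do \emph{not} remain strict in all three special panels. For a DAG there are no bidirected edges, so $\expandP(\gG)=\expandC(\gG)=\gG$ and the noise expansion adds only independent per-vertex latents; hence $\statmodelPE=\statmodelCE=\statmodelNE=\statmodelGM$ and the entire top chain collapses in panel~(c). The only strict arrow there is $\statmodelPE\Rightarrow\statmodelUM$. Your claim as written would also leave you unable to produce DAG witnesses for $\statmodelPE\subsetneq\statmodelCE$ or $\statmodelCE\subsetneq\statmodelNE$---because there are none.
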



Although we have not introduced many statistical models in
\Cref{fig:relation-statmodel} yet, some high-level observations can
already be made:
\begin{enumerate}
\item Unconfounded ADMGs share the equivalences of statistical models
  that are found for DAGs and bidirected graphs. For example,
  $\text{E} \Leftrightarrow \text{GM}$ is true for unconfounded ADMGs
  (and thus DAGs and bidirected graphs) but not all ADMGs. For this
  reason, unconfounded ADMGs may be considered as the natural
  generalization of DAGs and bidirected graphs.
\item A general ADMG is associated with many ``tiers'' of
  non-equivalent statistical models. Thus, the Equivalence argument
  does not give a natural definition of statistical model for all
  ADMGs.
\end{enumerate}



\subsection{Graph expansion and complete models}
\label{sec:agnostic-models}

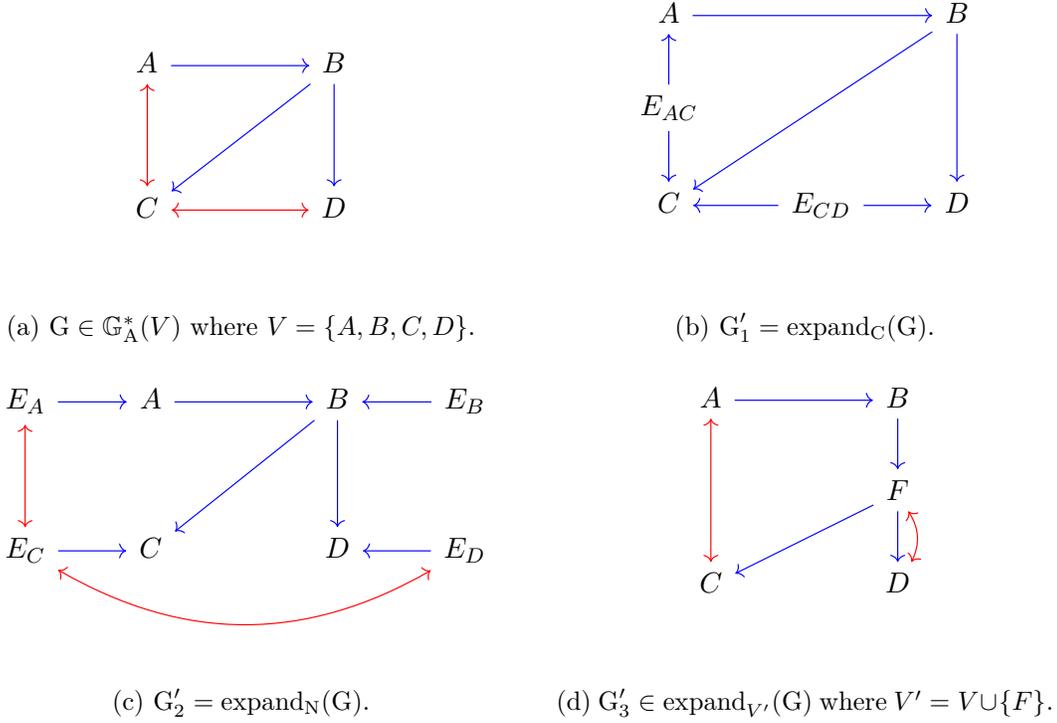
\begin{figure}[t]
  \centering
  \begin{subfigure}[b]{0.4\linewidth}
    \[
      \begin{tikzcd}
        A \arrow[rr, blue] 
        \arrow[dd, red,
        leftrightarrow] & & B \arrow[lldd, blue]
        \arrow[dd, blue] \\
        & & \\
        C \arrow[rr, red, leftrightarrow] & & D \\
      \end{tikzcd}
    \]
    \caption{$\gG \in \ADMGc(V)$ where $V = \{A,B,C,D\}$.}
    \label{fig:examples-G}
  \end{subfigure}
  \qquad
  \begin{subfigure}[b]{0.4\linewidth}
    \[
      \begin{tikzcd}
        A \arrow[rr, blue] 
        & & B \arrow[lldd, blue]
        \arrow[dd, blue] \\
        E_{AC} \arrow[u, blue] \arrow[d, blue] & & \\
        C & E_{CD} \arrow[l, blue] \arrow[r, blue] & D \\
      \end{tikzcd}
    \]
    \caption{$\gG_1' = \expandC(\gG)$.}
    \label{fig:examples-expandC}
  \end{subfigure}

  \begin{subfigure}[b]{0.4\linewidth}
    \[
      \begin{tikzcd}
        E_A \arrow[r, blue] \arrow[dd, leftrightarrow, red] & A
        \arrow[rr, blue] 
        & & B \arrow[lldd, blue]
        \arrow[dd, blue] & E_B \arrow[l, blue] \\
        & & \\
        E_C \arrow[r, blue] \arrow[rrrr, leftrightarrow, red, bend
        right] & C & & D & E_D \arrow[l, blue]\\
      \end{tikzcd}
    \]
    \caption{$\gG_2' = \expandN(\gG)$.}
    \label{fig:examples-expandE}
  \end{subfigure}
  \qquad
  \begin{subfigure}[b]{0.4\linewidth}
    \[
      \begin{tikzcd}
        A \arrow[rr, blue] 
        \arrow[dd, red,
        leftrightarrow] & & B
        \arrow[d, blue] \\
        & & F \arrow[lld, blue] \arrow[d, blue] \arrow[d,
        leftrightarrow, red, bend left] \\
        C & & D \\
      \end{tikzcd}
    \]
    \caption{$\gG_3' \in \expand_{V'}(\gG)$ where $V' = V \cup
      \{F\}$.}
    \label{fig:examples-expand}
  \end{subfigure}
  \caption{Examples of graph expansion (all bidirected loops are
    omitted).}
  \label{fig:examples}
\end{figure}

We will now turn to the Completion argument and define what we mean by
``complete''. To this end, let $\marg_V$ denote the (overloaded)
``marginalization'' operator on ADMGs (that maps $\ADMGc(V')$ to
$\ADMGc(V)$ for some $V' \supseteq V$) and probability distributions
(that maps $\statmodel(V')$ to $\statmodel(V)$); these will be
formally defined in \Cref{sec:marginalization}. Let $\expand_{V'}$ denote the
pre-image of $\marg_V$, that is,
\[
  \expand_{V'}(\gG) = \left \{\gG' \in \ADMGc(V'): \marg_V(\gG') = \gG
  \right\}.
\]

\begin{definition} \label{def:agnostic}
  For every possible vertex set $V$, let $\mathcal{G}_0(V) \subseteq
  \ADMGc(V)$ be a given subclass of (canonical) ADMGs. A collection of
  statistical models $\mathbb{P}(\gG)$ for different ADMGs $\gG$ is said to be
  \emph{complete} (with respect to expansions in $\mathcal{G}_0$) if
  it is equal to the union of the $V$-marginal of all
  $\mathcal{G}_0$-``expanded'' models, that is,
  \begin{equation}
    \label{eq:agnosticism}
    \mathbb{P}(\gG) = \bigcup_{V' \supset V}\bigcup_{\gG'}
    \marg_V(\mathbb{P}(\gG')),
  \end{equation}
  where the second union is over $\gG' \in \expand_{V'}(\gG) \cap
  \mathcal{G}_0(V')$.
\end{definition}

Completeness is a desirable property because it allows us to be
agnostic about the particular graph expansion (latent variable
``explanation'' of the distribution). In other words, a complete ADMG model does not try
to tell us \emph{why} two variables are related. For instance, when
the ADMG is interpreted as a causal model, a directed edge is usually
interpreted as a direct causal effect not through other variables in
the graph. It is entirely possible that such a direct causal effect is
mediated through one or multiple latent variables, but that is not
part of a complete model.

Equation \eqref{eq:agnosticism} is essentially a way to extend
the ``base models''---statistical models for a smaller class of graphs---to
a larger class of graphs. This heuristic can be widely used in the
literature to interpret ADMGs; for example, \textcite[p.\
76]{pearlCausality2009} writes
``... especially true in semi-Markovian models (i.e., DAGs involving
unmeasured variables)''. This intuitive ``latent DAG'' interpretation is
formalized in \textcite[Section 4.1]{richardsonNestedMarkovProperties2023}
who use DAGs as the base model (i.e.\ $\mathcal{G}_0(V) =
\DAG(V)$). \Cref{thm:statmodelE-agnostic} below further shows that
this latent DAG interpretation is equivalent to the clique expansion
(CE) model. Thus, rather than using \eqref{eq:agnosticism} as a rather
abstract definition of statistical model, we present it as a
completeness property of a model.

Besides DAGs ($\mathcal{G}_0(V) = \DAG(V)$), we will also consider
using unconfounded graphs as the base model ($\mathcal{G}_0(V) =
\UnADMGc(V)$). The next Theorem summarizes our second main result. 

\begin{figure}[t]
  \centering
  \[
    \begin{tikzcd}[row sep=small, column sep=small]
      \textnormal{PE} \arrow[d, Rightarrow] & & & \textnormal{PE}
      \arrow[d, Rightarrow]  & & &  \\
      \textnormal{CE} \arrow[d, Rightarrow] & & & \textnormal{CE}
      \arrow[d, Rightarrow] \arrow[lll, red, very thick,
      "\text{Completion}", swap] & & & \textnormal{PE} \arrow[r,
      Leftrightarrow] \arrow[dddd, Rightarrow] \arrow[lll, red, very thick,
      "\text{Completion}", swap] & \cdots \arrow[r,
      Leftrightarrow] & \textnormal{A} \\
      \textnormal{NE} \arrow[d, Rightarrow] \arrow[r, Leftrightarrow]
      & \textnormal{E} & & \textnormal{NE} \arrow[r, Leftrightarrow]
      \arrow[ddd, Rightarrow] \arrow[ll, red,
      very thick, "\text{Completion}", swap] &
      \cdots \arrow[r, Leftrightarrow] & \textnormal{A}  \\
      \textnormal{NM} \arrow[d, Rightarrow] & & & \\
      \textnormal{LM} \arrow[r, Leftrightarrow] \arrow[d, Rightarrow] & \textnormal{GM}
      \arrow[r, Leftrightarrow] & \textnormal{A} &       \\
      \textnormal{UM} & & & \textnormal{UM} \arrow[lll, red,
      very thick, "\text{Completion}", swap] & & & \textnormal{UM}
      \arrow[lll, red, very thick, "\text{Completion}", swap] \\
      \begin{tabular}{c}
        \textnormal{General} \\
        \textnormal{ADMGs} \\
        $\gG \in \ADMGc(V)$
      \end{tabular}
      & & &
      \begin{tabular}{c}
        \textnormal{Unconf.} \\
        \textnormal{ADMGs} \\
        $\gG \in \UnADMGc(V)$
      \end{tabular}
      & & &
      \begin{tabular}{c}
        \textnormal{DAGs} \\
        $\gG \in \DAG(V)$
      \end{tabular}
    \end{tikzcd}
  \]
  \caption{Completion of ADMG models.}
  \label{fig:completion}
\end{figure}
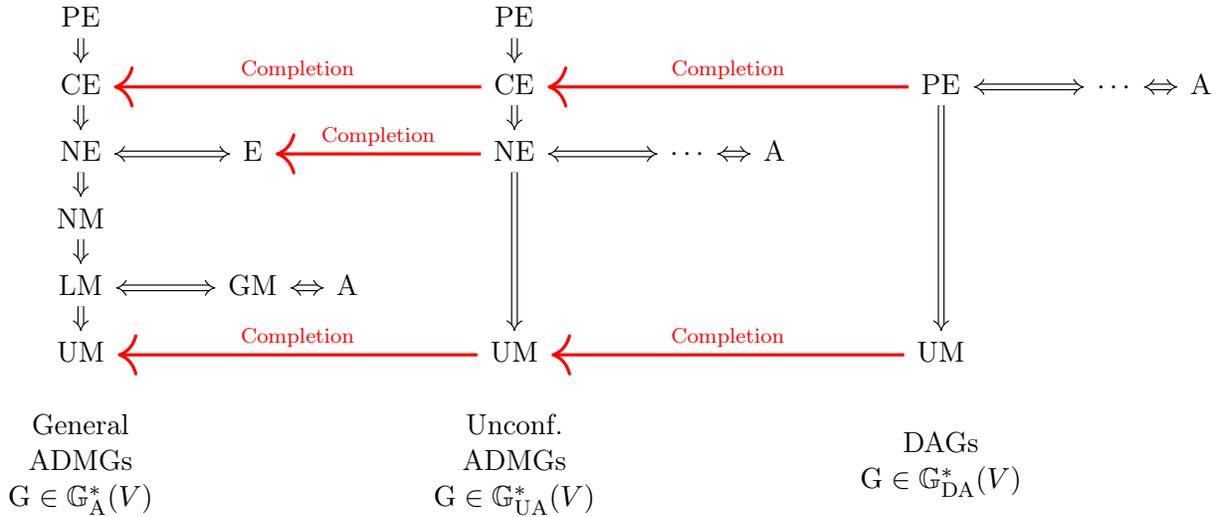

\begin{theorem} \label{thm:statmodelE-agnostic}
  Consider all the ADMG models in
  \Cref{fig:relation-statmodel-admg}. We have the following:
  \begin{enumerate}
  \item Taking $\mathcal{G}_0(V) = \UnADMGc(V)$ in
    \Cref{def:agnostic}, only the CE, E, NE, and UM models are
    complete.
  \item Taking $\mathcal{G}_0(V) = \DAG(V)$ in
    \Cref{def:agnostic}, only the CE and UM models
    are complete.
  \end{enumerate}
\end{theorem}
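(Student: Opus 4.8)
The plan is to compute, for each model in \Cref{fig:relation-statmodel-admg}, the right-hand side of \eqref{eq:agnosticism}---which I will call the $\mathcal{G}_0$-completion and denote $\overline{\mathbb{P}}_X(\gG)$---and to compare it with $\mathbb{P}_X(\gG)$ itself. By the equivalences in \Cref{thm:relation-statmodel} it is enough to treat one representative of each equivalence class, namely PE, CE, NE, NM, GM, and UM. The device that makes this tractable is that the whole hierarchy collapses on the two base classes: on a DAG every model except UM coincides with the single DAG model (\Cref{fig:relation-statmodel-dag}), and on an unconfounded graph the models NE, NM, EF, LM, GM, A all coincide, with CE and PE as submodels and UM as a supermodel (\Cref{fig:relation-statmodel-unconfounded}). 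Since the base model appearing inside \eqref{eq:agnosticism} is only ever evaluated on graphs $\gG'\in\mathcal{G}_0(V')$, these collapses force many different models to have \emph{the same} completion, and the proof reduces to identifying that common completion and comparing it with $\mathbb{P}_X(\gG)$ using the strict inclusions of \Cref{thm:relation-statmodel}.

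I would dispatch the case $\mathcal{G}_0(V)=\DAG(V)$ first, since it is essentially a corollary. As every model but UM equals the DAG model on a DAG, for each $X\neq\mathrm{UM}$ the completion $\overline{\mathbb{P}}_X(\gG)$ is literally the same set: the union over all DAG expansions $\gG'$ of $\marg_V$ of the DAG model on $\gG'$. The crux is the identity
\[
  \bigcup_{V'\supseteq V}\ \bigcup_{\gG'\in\expand_{V'}(\gG)\cap\DAG(V')}\marg_V\bigl(\statmodelNE(\gG')\bigr)=\statmodelCE(\gG),
\]
which is precisely the statement that CE is complete with respect to DAG expansions; I would prove it by showing, via the latent-projection analysis of \textcite[Section 4.1]{richardsonNestedMarkovProperties2023}, that every DAG-expansion marginal obeys the clique-expansion construction, while the clique expansion $\expandC(\gG)$ attains the union. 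Granting this, $\overline{\mathbb{P}}_X(\gG)=\statmodelCE(\gG)$ for all $X\neq\mathrm{UM}$, so such an $X$ is complete if and only if $\mathbb{P}_X(\gG)=\statmodelCE(\gG)$ for every $\gG$. This holds for $\mathrm{CE}$ and fails for PE, NE, NM, and GM, each of which \Cref{thm:relation-statmodel} separates strictly from CE on some graph.

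For $\mathcal{G}_0(V)=\UnADMGc(V)$ the engine is the completeness of NE, i.e.\ $\overline{\mathbb{P}}_{\mathrm{NE}}(\gG)=\statmodelNE(\gG)$. Here the inclusion $\statmodelNE(\gG)\subseteq\overline{\mathbb{P}}_{\mathrm{NE}}(\gG)$ is immediate, since $\expandN(\gG)\in\UnADMGc(V')$ is one of the admissible expansions and $\statmodelNE(\gG)$ is by construction the $V$-marginal of the (collapsed) unconfounded model on it; the reverse inclusion is the substantive marginalisation statement that $\marg_V(\statmodelE(\gG'))\subseteq\statmodelE(\marg_V(\gG'))=\statmodelNE(\gG)$ for every unconfounded $\gG'$ with $\marg_V(\gG')=\gG$, i.e.\ that the nonparametric equation model is closed under marginalisation along latent projections. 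Given this, the remaining cases follow from the collapse. For $X\in\{\mathrm{NM},\mathrm{GM},\mathrm{A}\}$ the base model equals the common unconfounded model on every $\gG'\in\UnADMGc(V')$, so $\overline{\mathbb{P}}_X(\gG)=\overline{\mathbb{P}}_{\mathrm{NE}}(\gG)=\statmodelNE(\gG)$, which is a \emph{strict} submodel of $\mathbb{P}_X(\gG)$ on any graph where \Cref{thm:relation-statmodel} separates NE from NM (resp.\ GM); hence these models are not complete. For $X=\mathrm{PE}$, evaluating the completion at the clique-expansion term $\gG'=\expandC(\gG)$---a DAG, hence unconfounded, on which PE collapses to the DAG model---already produces $\statmodelCE(\gG)$ by the $\DAG$-case identity, so $\overline{\mathbb{P}}_{\mathrm{PE}}(\gG)\supseteq\statmodelCE(\gG)\supsetneq\statmodelPE(\gG)$ and PE is not complete. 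Finally CE is complete: the inclusion ``$\supseteq$'' is witnessed by $\expandC(\gG)$, and for ``$\subseteq$'' each term $\marg_V(\statmodelCE(\gG'))$ with $\gG'$ unconfounded equals $\marg_V$ of a DAG model over $\expandC(\gG')$, which by transitivity of marginalisation is a DAG-expansion marginal of $\gG$ and hence lies in $\statmodelCE(\gG)$ by the case already settled.

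The model UM falls outside every collapse and must be argued directly, but its completeness in \emph{both} parts rests on a single property of latent projection: marginalisation preserves and reflects unconditional $\gG$-separation among the retained vertices. Granting this, ``$\supseteq$'' follows by extending any $\P\in\statmodelUM(\gG)$ to the canonical expansion with copied or independent latent coordinates, and ``$\subseteq$'' follows because the $V$-marginal of a distribution obeying the unconditional independences of $\gG'$ obeys exactly those of $\marg_V(\gG')=\gG$; the argument is indifferent to whether $\mathcal{G}_0$ is $\DAG$ or $\UnADMGc$, which matches UM being the only non-CE model complete in both cases. I expect the main obstacle to be the marginalisation lemma $\marg_V(\statmodelE(\gG'))\subseteq\statmodelE(\marg_V(\gG'))$ underlying NE-completeness: this is where the structural-equation content resides, and it is most naturally proved through the ADMG-based causal theory and latent-projection properties developed in \Cref{sec:causal-model} and \Cref{sec:marginalization} rather than by manipulating densities directly.
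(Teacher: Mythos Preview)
Your plan is essentially the paper's: exploit the collapse of the hierarchy on the base classes to compute the completions, identify the key marginalisation closures $\marg_{\tilde V}(\statmodelE(\gG))\subseteq\statmodelE(\marg_{\tilde V}(\gG))$ and $\marg_{\tilde V}(\statmodelCE(\gG))\subseteq\statmodelCE(\marg_{\tilde V}(\gG))$, and handle UM via invariance of unconditional m-separation under latent projection. Two small divergences are worth noting. First, your expectation that the $\statmodelE$-marginalisation lemma ``is most naturally proved through the ADMG-based causal theory \ldots\ of \Cref{sec:causal-model}'' is misdirected: the paper proves it (as \Cref{lem:margin-E}) by an elementary one-vertex-at-a-time argument---substitute $V_j=f_j(V_{\pa(j)},E_j)$ into the equations for $\ch(j)$, merge $E_j$ into the noises of the children via a bijection $[0,1]^2\to[0,1]$, and verify the bidirected Markov property of the new noise by a short case analysis; \Cref{sec:causal-model} is used only for $\mathrm{E}\Rightarrow\mathrm{NM}$, which is a different statement. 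Second, for CE the paper proves the analogous closure directly (\Cref{lem:margin-CE}) rather than reducing the unconfounded case to the DAG case via transitivity of latent projection as you propose; your route is valid and arguably tidier, but note that the ``DAG-case identity'' you invoke still needs a proof of the $\subseteq$ direction, and \textcite[Section~4.1]{richardsonNestedMarkovProperties2023} gives $\mathrm{CE}\Rightarrow\mathrm{NM}$ rather than the CE-closure you need---so you will end up proving something like \Cref{lem:margin-CE} anyway. Your treatment of PE (witnessing $\overline{\mathbb P}_{\mathrm{PE}}\supseteq\statmodelCE\supsetneq\statmodelPE$ via $\expandC(\gG)$) is cleaner than the paper's explicit bidirected-3-cycle counterexample and works for both choices of $\mathcal G_0$.
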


\Cref{fig:completion} visualizes the results in
\Cref{thm:relation-statmodel,thm:statmodelE-agnostic}. It shows that
if we use both the Equivalence and the Completion arguments, there are
two good candidates for ADMG model:
\begin{enumerate}
\item The clique expansion (CE) model, which is complete with respect
  to DAG expansions (if we use any of the equivalent models for DAGs).
\item The noise expansion (NE) model (or equivalent the E model),
  which is complete with respect to unconfounded expansions (if we use
  any of the equivalent models for confounded graphs).
\end{enumerate}
The choice between these two models rest on whether one finds it more
attractive to use DAGs or unconfounded graphs as the base model. In
the author's opinion, the latter is more natural because no
explanation is needed for the bidirected edges. Intuitively, a
bidirected edge means that two variables are correlated in an
exogenous way, possibly due to one or multiple latent common
causes. However, the nature of that exogenous correlation is not part
of the NE model. We will return to more discussion on this in
\Cref{sec:discussion}.

Let us take a moment to illustrate the definition of
completeness. Consider
the graphs in \Cref{fig:examples}, and let us use unconfounded graphs as
the base model so $\mathcal{G}_0(V) = \UnADMGc(V)$. The graph $\gG \in
\ADMGc(V)$ for $V = \{A, B, C, D\}$ in \Cref{fig:examples-G} is not
unconfounded, but after the
``clique'' (\Cref{fig:examples-expandC}) or
``noise'' expansion (\Cref{fig:examples-expandE}), it becomes an
unconfounded graph. The remark after \Cref{thm:relation-statmodel}
suggests that the nonparametric system of equations is a natural
statistical model associated with such graphs.
\Cref{fig:examples-expand} shows another possible
expansion of $\gG$ that involves a latent variable $F$, but the
expanded graph is confounded because of $A \bdedge C \ldedge F$ and $B
\rdedge F \bdedge D$ (one can further expand the bidirected edges to
make the graph unconfounded). By requiring the model
$\statmodel(\gG)$ to be complete with respect to unconfounded graphs,
it should contain the $V$-marginals of $\statmodel(\gG_1')$,
$\statmodel(\gG_2')$.



The rest of this article is organized as follows. In
\Cref{sec:notation-terminology}, we introduce some basic notation and
terminology for graphical statistical models. In
\Cref{sec:stat-models-assoc}, we formally define the statistical models
that appear in \Cref{thm:relation-statmodel}. In
\Cref{sec:causal-model}, we outline a proof of the assertion that the
nonparametric equation system is nested Markov by building a theory
for causality based on ADMGs. In \Cref{sec:discussion}, we give some
further remarks on causal models associated with ADMGs. Technical
proofs can be found in the Appendix.

\section{Basic notation and terminology}
\label{sec:notation-terminology}

\subsection{Conditional independence}
\label{sec:cond-indep}

As a notational convention, we use sans serif font $\P$ to denote a
probability measure or its cumulative distribution function and $\p$
to denote its probability density function. We use bold font
$\mathbb{P}$ to denote a collection of probability distributions.

Intuitively, a graphical statistical model imposes
algebraic (and semi-algebraic) constraints on probability
distributions according to certain structures in the graph. Perhaps
the simplest form of algebraic constraints on
probability distributions is conditional independence: for disjoint
subsets $V_{\sJ}, V_{\sK}, V_{\sL}$ of $V$, define
\[
  V_{\sJ} \independent V_{\sK} \mid V_{\sL} \underdist{\P}
  \Longleftrightarrow
  \p(v_{\sJ}, v_{\sK} \mid v_{\sL}) = \p(v_{\sJ} \mid v_{\sL})
  \p(v_{\sK} \mid v_{\sL})~\text{for all}~v_{\sL}~\text{such
    that}~\p(v_{\sL}) > 0,
\]
where $\p(v_{\sJ}, v_{\sK} \mid v_{\sL})$ is the conditional density
function of $V_{\sJ}$ and $V_{\sK}$ given $V_{\sL}$ evaluated at
$(v_{\sJ}, v_{\sK}, v_{\sL})$ (under law $\P$) and other conditional
densities are defined similarly.

Conditional independence satisfies a number of ``graphoid axioms''
that bear a close relation to graph separation; see
\textcite{pearlProbabilisticReasoningIntelligent1988,lauritzenGraphicalModels1996}.


\subsection{Walk algebra}
\label{sec:walk-algebra}

We adopt the notation and terminology in
\textcite{zhaoMatrixAlgebraGraphical2024} to describe the walk algebra
generated by directed mixed graphs. For $V_j, V_k \in V$, we say $w$
is a \emph{walk} from $V_j$ to $V_k$
if it is a sequence of connecting edges (edge directions are ignored
when deciding connection), its first edge starts at $V_j$, and its
last edge ends at $V_k$. We say a walk is \emph{simple} if its
end-points appear only once, and we say a walk is a \emph{path} if all
vertices in it appear only once. We say a walk is \emph{blocked} by $L
\subseteq V$ if
\begin{enumerate}
\item $w$ contains a collider $V_l$ (so part of $w$ looks like
  $\rdedge V_l \ldedge$, $\bdedge V_l \ldedge$, or $\rdedge V_l
  \bdedge$) such that $V_l \not \in L$; \text{or}
\item $w$ contains a non-collider $V_l$ such that $V_l \in L$.
\end{enumerate}
This is slightly different from (but in canonical ADMGs equivalent to)
the notion of blocking for paths usually used in the literature which
requires that no descendants of any collider $V_l$ is in $L$. See
\textcite{zhaoMatrixAlgebraGraphical2024} for further discussion.

If $\{V_j\}$, $\{V_k\}$, and $L$ are disjoint and there exists an
unblocked walk from $V_j$ to $V_k$ given $L$, we say $V_j$ is
\emph{m-connected} to $V_k$ given $L$ and write $V_j
\mconn V_k \mid L \ingraph{\gG}$; the half arrowheads mean the walk
can end with or without arrowheads on both sides, and the asterisk is
a wildcard character to indicate that the walk may have any number of
colliders. If no such walk
exists, we say $V_j$ and $V_k$ are \emph{m-separated} given $L$ in
$\gG$ and write $\textnot V_j \mconn V_k \mid L \ingraph{\gG}$. This
definition naturally extends to sets of vertices: for disjoint $J,
K, L \subset V$, we write
\[
  J \mconn K \mid L \ingraph{\gG} \Longleftrightarrow
  V_j \mconn V_k \mid L \ingraph{\gG}~\text{for some}~V_j \in
  J, V_k \in K.
\]

We now introduce some special types of walks and associated concepts
that play important roles in the theory of ADMG models:
\begin{enumerate}
\item $\rdedge$ and $\bdedge$: these are the basic edges that generate
  the walk algebra. We write $\pa_{\gG}(V_j) = \{V_k \in V: V_k \rdedge V_j\}$
  as the \emph{parents} of $V_j$ and $\ch_{\gG}(V_j) = \{V_k \in V: V_j
  \rdedge V_k\}$ as the \emph{children} of $V_j$. When it is more
  convenient to work with indices of the variables, we often
  use the notation $\pa_{\gG}(j) = \{k \in [d]: V_k \rdedge V_j\}$ and
  likewise for $\ch_{\gG}(j)$. We sometimes omit the graph $\gG$ in
  the subscript when it is clear from the context.
\item $\rdpath$: this means a \emph{(right-)directed walk} that
  consists of one or more $\rdedge$. We write $\an(V_j) = \{V_k \in V:
  V_k \rdedge V_j\}$
  as the \emph{ancestors} of $V_j$ and $\de(V_j) = \{V_k \in V: V_j
  \rdedge V_k\}$ as the \emph{descendants} of $V_j$. The corresponding
  indices are denoted as $\an(j)$ and $\de(j)$, respectively. We say a
  subset of vertices $J \subseteq V$ is \emph{ancestral} in $\gG$ if
  it contains all its ancestors, that is, $\an(J) \subseteq J$ where
  \[
    \an(J) = \bigcup_{V_j \in J} \an(V_j) = \{V_k \in V: V_k \rdpath J
    \ingraph{\gG}\}.
  \]
  This concept is useful because the ancestral marginal of an ADMG is
  simply its induced subgraph, that is, if $J$ is ancestral, then
  $\marg_J(\gG) = (J, \sD \cap (J \times J), \sB \cap (J \times J))$
  for $\gG = (V, \sD, \sB)$; see \Cref{sec:marginalization} below for
  the definition of graph marginalization.
\item $\mconnarc$: this means an \emph{arc}, a walk with no colliders.
\item $\confarc$: these are all arcs that are not $\rdpath$ or
  $\ldpath$ (consisting of one or more $\ldedge$). When a walk like
  $\confarc$ is a path, we call it a \emph{confounding arc}.
\item $\samedist$: this means a walk consisting one or more
  $\bdedge$. The set $\dis_{\gG}(V_j) = \{V_k \in V: V_k \samedist
  V_j \ingraph{\gG}\}$ is called the \emph{district} of $V_j$ in
  $\gG$.\footnote{This terminology is due to
    \textcite{richardsonMarkovPropertiesAcyclic2003}. The same concept
    is called \emph{c-component} in
    \textcite{tianTestableImplicationsCausal2002a}.}
\item $\colliderconn$: this means a \emph{collider-connected walk} in
  which all non-endpoints are colliders. The set $\mb_{\gG}(V_j) =
  \{V_k \in V: V_k \colliderconn V_j \ingraph{\gG}\}$ is called the
  \emph{Markov boundary} of $V_j$ in $\gG$.\footnote{This is closely
    related to the concept of Markov blanket and Markov boundary
    (minimal Markov blanket) in
    conditional independence models; see
    \textcite{pearlProbabilisticReasoningIntelligent1988}.} The
  corresponding set of indices is denoted as $\mb_{\gG}(j)$.
\item $\markovblanket$: this is a collider-connected walk that ends
  with an arrowhead. The set $\mbg_{\gG}(V_j) =
  \{V_k \in V: V_k \markovblanket V_j \ingraph{\gG}\}$ is called the
  \emph{Markov background} of $V_j$ in
  $\gG$.\footnote{When $V_j$ has no children in $\gG$, it is obvious
    that $\mb_{\gG}(V_j) = \mbg_{\gG}(V_j)$. For this reason,
    \textcite{richardsonNestedMarkovProperties2023} also refers to
    $\mbg_{\gG}(V_j)$ as the Markov blanket/boundary of
    $V_j$. However, this terminology is confusing when $V_j$ is not
    childless and thus avoided here.} The
  corresponding set of indices is denoted as $\mbg_{\gG}(j)$.
\item $\mconn$: this is a walk consisting of one or more arcs, which
  is simply any walk in the graph.
\end{enumerate}
A formal definition of these and some other important types of walks can be
found in \textcite{zhaoMatrixAlgebraGraphical2024}.

\subsection{Marginalization}
\label{sec:marginalization}

As our argument rests on considering latent variable expansions of
graphical models, let us take some care to
define the related concepts. Consider an ADMG $\gG \in \ADMGc(V)$. We
restrict ourselves
to the case where each vertex $V_j \in V, j=1,\dots,d$, of the graph
is a finite-dimensional real random variable, so
$\mathbb{V}_j \subseteq \mathbb{R}^{n_j}$ for some $n_j \in
\mathbb{Z}^+$. We assume that $\mathbb{V}_j$ is a measure space and
the choice of measure will be implicit in the definitions below; in
practice, this is usually the Lebesgue measure if the random variable
is continuous or the counting measure if the random variable is
discrete. Let $\mathbb{V} = \mathbb{V}_1 \times \dots \times
\mathbb{V}_d$ and $\mathbb{P}(\mathbb{V})$ denote the set of all
probability measures on $\mathbb{V}$ with a density function, so
$\mathbb{P}(\mathbb{V})$ is isomorphic to the set of non-negative
functions on $\mathbb{V}$ with integral $1$.

The marginalization operator can act on product spaces,
probability distributions, and graphs. For any subset $\sJ
\subseteq [d]$ and $J = V_{\sJ} \subseteq V$, denote the $J$-marginal
of $\mathbb{V}$ as
\[
  \marg_{J}(\mathbb{V}) = \mathbb{V}_{\sJ} = \prod_{j \in \sJ}
  \mathbb{V}_j.
\]
Further, let $\marg_J(\P)$ denote the marginal distribution of $J$
when the joint distribution of $V$ is $\P$, so the density function of
$\marg_J(\P)$ is simply the marginal density function $\p(v_{\sJ})$ of
$V_{\sJ}$. Finally, for an ADMG $\gG \in
\ADMGc(V)$, its $J$-marginal is defined as its image under the map
\begin{align*}
  \marg_J: \ADMGc(V) &\to \ADMGc(J), \\
  \gG &\mapsto \gG',
\end{align*}
where $\gG'$ is defined by the following equivalences for all $V_j,V_k
\in J$ such that $V_j \neq V_k$:
\begin{align*}  
  V_j \rdedge V_k \ingraph{\gG'}&\Longleftrightarrow
                                  P[V_j \rdpath V_k \mid J
                                  \ingraph{\gG}] \neq \emptyset, \\ 
  V_j \bdedge V_k \ingraph{\gG'}&\Longleftrightarrow
                                  P[V_j \confarc V_k \mid J \ingraph{\gG}]
                                  \neq
                                  \emptyset. 
\end{align*}
Here, $P$ means the set of paths, so $P[V_j \rdpath V_k \mid J
\ingraph{\gG}]$ is the set of all
directed paths from $V_j$ to $V_k$ in $\gG$ with no
non-endpoints in $J$, and $P[V_j \confarc V_k \mid J \ingraph{\gG}]$
is the set of all confounding arcs (paths with no collider and
two end-point arrowheads) from $V_j$ to $V_k$ in $\gG$ with no
non-endpoints in $J$. It can be shown that the order of
graph marginalization does not matter. Marginalization
of directed mixed graphs is often referred to as ``latent projection''
in the literature and is first considered by
\textcite{vermaEquivalenceSynthesisCausal1990}. The reader is invited
to check that the graphs in
\Cref{fig:examples-expandC,fig:examples-expandE,fig:examples-expand}
all marginalize to the graph in \Cref{fig:examples-G}.

\section{Statistical models associated with directed mixed graphs}
\label{sec:stat-models-assoc}

\subsection{Global Markov (GM) property}
\label{sec:global-markov-models}

The global Markov model assumes that every m-separation in the
graph implies a conditional independence in the probability
distribution.

\begin{definition}
  The \emph{global Markov model} with respect to $\gG \in \DMGc(V)$ is defined
  as
  \begin{align*}
    &\statmodelGM(\gG, \mathbb{V}) \\
    =& \{\P \in \statmodel(\mathbb{V}):
       \textnot J \mconn
       K \mid L \ingraph{\gG} \Longrightarrow J \independent K \mid L
       \underdist{\P}~\text{for all disjoint}~J,K,L \subset V \}.
  \end{align*}
\end{definition}

The global Markov model takes simpler forms in some subclasses of
$\DMGc(V)$. When the graph is directed (so $\gG \in \DiG(V)$), walks
must consist of
directed edges (if we ignore bidirected loops) so $\mconn$ can be
written as $\dconn$ (where $d$ means the walk consist of directed
edges only). When the graph is bidirected (so $\gG \in \BDG(V)$),
walks must consist of bidirected edges and $\mconn$ can be written as
$\samedist$ (meaning one or more bidirected edges). See
\textcite{zhaoMatrixAlgebraGraphical2024} for further discussion.

\subsection{Unconditional Markov (UM) model}
\label{sec:conn-mark-models}


The next model only requires the unconditional independences in the
global Markov model. 

\begin{definition}
  The \emph{unconditional Markov model} with respect to $\gG \in \DMGc(V)$ is
  defined as
  \[
    \statmodelUM(\gG, \mathbb{V}) = \{\P \in \statmodel(\mathbb{V}):
    \textnot J \mconnarc K \ingraph{\gG} \Longrightarrow J
    \independent K \underdist{\P}~\text{for all disjoint}~J,K
    \subset V \}.
  \]
\end{definition}

When the graph $\gG \in \BDG(V)$ is
bidirected, this reduces to the \emph{connected set Markov property}
in \textcite{richardsonMarkovPropertiesAcyclic2003} which says every
connected set (via bidirected edges) is independent of its
non-neighbours.

\subsection{Ordered local Markov (LM) property}
\label{sec:ordered-local-markov}

The ordered local Markov property tries to reduce the conditional
independences required by the global Markov model.
Given an ADMG $\gG \in \ADMGc(V)$, we say a strict order $\prec$ on
the vertex set $V$ is a \emph{topological order} of $\gG$ if
\[
  V_k \rdedge V_j \ingraph{\gG} \Longrightarrow V_k \prec
  V_j~\text{for all}~ V_j, V_k \in V.
\]
An ADMG may have multiple topological orders. Let
$\text{pre}_{\prec}(V_j) = \{V_k \in V: V_k \prec V_j\}$ collect all
vertices before $V_j$ in the order $\prec$.
Recall that the \emph{Markov boundary} of $V_j \in V$ in $\gG \in
\ADMGc(V)$ is defined as all vertices that can be connected to $V_j$
via colliders:
\[
  \mb_{\gG}(V_j) = \{V_k \in V: V_k \colliderconn V_j \ingraph{\gG}\}.
\]
If an ancestral set $K \subseteq V$ contains $V_j$ ($V_j \in K$) but
not any children of $V_j$ ($V_j \nordedge K \ingraph{\gG}$), the
Markov boundary of $V_j$ in $K$ is defined as
\[
  \mb_{\gG}(V_j, K) =
  \{V_k \in K: V_k \markovblanket V_j \ingraph{\gG}\} =
  \mb_{\gG_K}(V_j) = \mbg_{\gG}(V_j) \cap K,
\]
where $\gG_K$ is the subgraph of $\gG$ restricted to $K$.
The reader is invited to verify the last two equalities.

\begin{definition}
  The \emph{ordered local Markov model} with respect to $\gG \in \ADMGc(V)$
  and a topological order $\prec$ of $\gG$ is defined as
  \begin{align*}
    \statmodelLM(\gG, \prec, \mathbb{V}) = \Big\{\P \in
    \statmodel(\mathbb{V}): &V_j
                              \independent K \setminus \mb_{\gG}(V_j,K) \setminus V_j \mid
                              \mb_{\gG}(V_j,K)
                              \underdist{\P} \\
                            &\text{for all}~V_j~\text{and
                              ancestral}~K~\text{such that}~V_j \in K
                              \subseteq
                              \text{pre}_{\prec}(V_j)  \Big\}.
  \end{align*}
\end{definition}


This definition is due to \textcite[p.\
151]{richardsonMarkovPropertiesAcyclic2003}.
It can be shown that the model $\statmodelLM(\gG, \prec, \mathbb{V})$
actually does not depend on which topological order $\prec$ is
used. For this reason, we will write it as $\statmodelLM(\gG,
\mathbb{V})$.

When $\gG$ is a DAG (i.e.\ $\gG \in \DAG(V)$), the Markov boundary of
$V_j \in V$ reduces to
\[
  \mb_{\gG}(V_j) = \{V_k \in V: V_k \rdedge V_j, V_k \ldedge
  V_j,~\text{or}~V_k \rdedge V_l \ldedge V_j~\text{for some}~V_l \in
  V\}.
\]
If $K$ is an ancestral set that contains $V_j$ but none of its
children, it is easy to see that the Markov boundary of $V_j$ in $K$
is precisely its parents (and thus does not depend on $K$):
\[
  \mb_{\gG}(V_j, K) = \pa_{\gG}(V_j) = \{V_k \in V: V_k \rdedge V_j\}.
\]
Therefore, the definition of ordered local Markov model for DAGs is
consistent with that in \textcite[p.\
50]{lauritzenGraphicalModels1996}.


\subsection{Factorization (F) and exogenous factorization (EF)
  properties}
\label{sec:fact-prop}

\begin{definition}
  For a DAG $\gG \in \DAG(V)$, the \emph{factorization model} is defined as
  \[
    \statmodelF(\gG, \mathbb{V}) = \Big\{\P \in \statmodel(\mathbb{V}):
    \p(v) = \prod_{j=1}^p \p(v_j \mid v_{\pa_{\gG}(j)})~\text{whenever
      the right hand side is well defined} \Big\},
  \]
  where $\p(v)$ is the density function of $V$ and $\p(v_j \mid
  v_{\pa_{\gG}(j)})$ is the conditional density function $V_j$ given
  its parents in $\gG$.
\end{definition}

Some authors refer to a probability distribution in the above model as
a \emph{Bayesian network}, a terminology due to
\textcite{pearlBayesianNetworksModel1985}.
Next, we given an extension of this definition to unconfounded ADMGs.

\begin{definition}
  Consider an unconfounded ADMG $\gG \in \UnADMGc(V)$ with exogenous
  vertices $E \subseteq V$. The \emph{exogenous factorization
    model} with respect to $\gG$ and $E$ is defined as
  \begin{align*}
    \statmodelEF(\gG, \mathbb{V}) = \Big\{\P \in \statmodel(\mathbb{V}):
    &\p(v) = \p(e) \prod_{V_j \in V \setminus E} \p(v_j \mid
      v_{\pa_{\gG}(j)})~\text{whenever well defined}, \\
    &\marg_E(\P) \in \statmodelGM(\marg_E(\gG),
      \marg_E(\mathbb{V})) \Big\},
  \end{align*}
  where $\p(e) = \marg_E(\p)$ is the marginal density function of $E$.
\end{definition}


It is easy to see that $\statmodelEF(\gG, \mathbb{V}) =
\statmodelF(\gG, \mathbb{V})$ if $\gG \in \DAG(V)$ and
$\statmodelEF(\gG, \mathbb{V}) = \statmodelGM(\gG, \mathbb{V})$ if
$\gG \in \BDG(V)$. So exogenous factorization is a concept that
generalizes factorization with respect to DAGs and global Markov
property with respect to bidirected graphs. The requirement that the
marginal distribution of $E$ is global Markov is not essential and can
be replaced by other equivalent definitions (see
\Cref{fig:relation-statmodel-bdg}).

\subsection{Nested Markov (NM) property}
\label{sec:nest-mark-prop}

To describe the nested Markov property, we will need to introduce a
new class of graphs. Let $\ADMGc(V,W)$ collects the set of
\emph{conditional ADMGs}:\footnote{It is perhaps more appropriate to
  call these graphs ``fixed ADMGs'', but since we will not use them
  very often, we will call them ``conditional ADMGs'' to be consistent with
  \textcite{richardsonNestedMarkovProperties2023}.}
\[
  \ADMGc(V,W) = \{\gG \in \ADMG(V \cup W): V_j \bdedge V_j
  \ingraph{\gG}~\text{and}~\textnot V \cup W \halfstraigfull
  W_k~\text{for all}~V_j \in V,W_k \in W\}.
\]
Because there are no arrowheads pointing to vertices in $W$, one may
refer to them as ``fixed'' vertices and draw them in the graph with
boxes, as done in \textcite{richardsonNestedMarkovProperties2023}. By
definition, $\ADMGc(V,\emptyset) = \ADMGc(V)$.

The nested Markov property is defined through the \emph{fixing}
operator that applies to product spaces, probability distributions and
conditional ADMGs
\parencite{richardsonNestedMarkovProperties2023}. First, for any $V_j
\in V$, $\fix_{V_j}(\mathbb{V}) = \mathbb{V}_{-j}$ because
$V_j$ will be ``fixed''. Next, when acting on a graph $\gG \in
\ADMGc(V,W)$, the fixing operator $\fix_{V_j}:\ADMGc(V,W) \to
\ADMGc(V_{-j}, W \cup \{V_j\})$ removes all edges with an arrowhead
into $V_j$ (so $V_j$ is ``fixed'' and is moved to part of $W$) and
keeps all other edges. Finally, when acting on probability
distributions, the fixing operator $\fix_{V_j=v_j}:
\ADMGc(V,W) \times \statmodel(\mathbb{V}) \to \statmodel(\fix_{V_j}(\mathbb{V}))$ is
defined as the following transformation of the density function:
\[
  (\fix_{V_j=v_j}(\gG,\p))(v_{-j}) = \frac{\p(v)}{\p(v_j \mid
    v_{\mbg_{\gG}(j)})},
\]
The dependence on the conditional ADMG $\gG$ is often omitted.
It is easy to verify that the image is indeed a density
function for $V_{-j}$ (non-negative and integrates to $1$) that is
indexed by $v_j \in \mathbb{V}_j$.\footnote{Fixing
  is well defined
  whenever $\p(v_j \mid v_{\mbg_{\gG}(j)})$ is not $0$ or
  $\infty$. An argument similar to that in \textcite[Theorem
  5.12]{pollardUserGuideMeasure2001} shows that such event has
  probability $0$ and thus is inconsequential in defining the density
  function of the probability distribution after fixing.
}
We deliberately denoted the fixing operator as $\fix_{V_j = v_j}$
because it it closely related to identifying the interventional
distribution of $V_{-j}$ when $V_j$ is set to $v_j$; see
\Cref{prop:fixing} and equation \eqref{eq:invariance-fixing} in
\Cref{sec:causal-model} below.
Let $\fix_{V_j}(\p) = (\fix_{V_j=v_j}(\p): v_j \in \mathbb{V}_j)$
collects all the fixed distributions; this is called a ``kernel'' in
\textcite{richardsonNestedMarkovProperties2023} following
\textcite{lauritzenGraphicalModels1996}.

Because fixing is closely related to causal identification, not all
fixing operations are ``legal''. Given $\gG
\in \ADMGc(V,W)$, we say $V_j \in V$ is \emph{fixable} in $\gG$ if
there exists no $V_k \in V$ such that $V_j \rdpath
V_k~\text{and}~V_j \samedist V_k \ingraph{\gG}$. In other words, $V_j$
is fixable if none of its descendants is in the same district as $V_j$.

For a sequence of distinct vertices $J = V_{\sJ} =
(V_{j_1},\dots,V_{j_n})$, define
\[
  \fix_J = \fix_{V_{j_1}} \circ \dots \circ \fix_{V_{j_n}},
\]
which can be applied to product spaces, graphs, and sets of
probability distributions.
We say the sequence $J$ is fixable in $\gG$ if $V_{j_m}$ is fixable
in $\fix_{V_{j_1}} \circ
\dots \circ \fix_{V_{j_{m-1}}}(\gG)$ for all $m = 1,\dots,n$.  Not all
permutations of $J$ are fixable, but all fixable permutations of $J$
define the same fixing operator on ADMGs and on nested Markov
distribution \parencite[Theorem
31]{richardsonNestedMarkovProperties2023}; see also the remark
at the end of \Cref{sec:proof-crefthm:-ne}. So with a slight abuse of
notation, $\fix_J$ can also be defined for any (unordered) subset $J
\subseteq V$ that has at least one fixable permutation. We use the
convention that $\fix_\emptyset(\cdot)$ is just the identity. 

In a nutshell, the nested Markov model requires that the
probability distribution after fixing satisfies an extended global
Markov property with respect to the fixed graph
\parencite[Definitions
4,12,13]{richardsonNestedMarkovProperties2023}.
Consider disjoint subsets $V_{\sK},V_{\sL},V_{\sM} \subseteq V$. If
$V_{\sK} \cap V_{\sJ} = \emptyset$, define
\[
  V_{\sK} \independent V_{\sL} \mid V_{\sM}
  \underdist{\fix_{V_{\sJ}}(\P)} \Longleftrightarrow \fix_{V_{\sJ} =
    v_{\sJ}}(\p)(v_{\sK} \mid v_{(\sL \cup \sM) \setminus
    \sJ})~\text{is a function only of}~v_{\sK}~\text{and}~v_{\sM}.
\]
To make this definition symmetric, if $V_{\sK} \cap V_{\sJ} \neq
\emptyset$, the conditional independence
$V_{\sK} \independent V_{\sL} \mid V_{\sM}$ holds if and only if
$V_{\sL} \cap V_{\sJ} = \emptyset$ and $V_{\sL} \independent V_{\sK}
\mid V_{\sM}$. So in this extended notion of conditional independence,
it is required that at least one of $V_{\sK}$ and $V_{\sL}$ contains
no fixed vertices.

\begin{definition}

  We say $\P \in \statmodel(\mathbb{V})$ is \emph{nested Markov} with
  respect to $\gG \in \ADMGc(V)$ if for all fixable $V_{\sJ} \subseteq
  V$ and disjoint $V_{\sK}, V_{\sL}, V_{\sM} \subseteq V$,
  \begin{align}
       \textnot V_{\sK} \mconn V_{\sL} \mid V_{\sM}
       \ingraph{\widetilde{\fix}_{V_{\sJ}}(\gG)}
       \Longrightarrow V_{\sK} \independent V_{\sL} \mid V_{\sM}
       \underdist{\fix_{V_{\sJ}}(\P)}, \label{eq:cadmg-global-markov}
  \end{align}
  where $\widetilde{\fix}_{V_{\sJ}}(\gG)$ is the graph
  $\fix_{V_{\sJ}}(\gG)$ with the additional edges $V_j \bdedge V_k$
  for all $V_j,V_k \in V_{\sJ}$. Let $\statmodelNM(\gG, \mathbb{V})$
  collects all such distributions.
\end{definition}


\subsection{Augmentation (A) criterion}
\label{sec:augm-crit}

The augmentation criterion links statistical models associated with
directed graphs with those associated with undirected graphs. To this
end, let us introduce some additional notation. Let $\UG(V)$ denote
the collection of all simple undirected graphs with vertex set $V$;
specifically, $\UG(V)$ contains all graphs $\gG' = (V, \sE)$ such that
$\sE \subseteq V \times V$, $(V_j, V_j) \not \in \sE$,
and $(V_j,V_k) \in \sE$ implies that $(V_k,V_j) \in \sE$ for all
$V_j,V_k \in V$. This definition is not different from a bidirected
graph besides the requirement of no self-loops, but the semantics of
undirected and bidirected graphs are different in terms of graph
separation. Specifically, for
$\gG' \in \UG(V)$ and disjoint subsets $J, K, L \subset V$, we say
$L$ \emph{separate} $J$ and $K$ in $\gG'$ and write
\[
  \textnot J \uconn K \mid L \ingraph{\gG'},
\]
if every path from a vertex in $J$ to a vertex in $K$ contains an
non-endpoint in $L$. The global Markov model associated with an
undirected graph $\gG' \in \UG(V)$ is defined as
\begin{align*}
  \statmodelGM(\gG', \mathbb{V}) = \{\P \in \statmodel(\mathbb{V}):
  \textnot J \uconn K \mid L \ingraph{\gG'} \Longrightarrow J
  \independent K \mid L \underdist{\P}&\\
  \text{for all disjoint}~J,K,L \subset V&\}.
\end{align*}

Consider the following \emph{augmentation} map from directed mixed
graphs to undirected graphs:
\begin{align*}
  \augg: \DMGc(V) &\to \UG(V), \\
  \gG &\mapsto \gG',
\end{align*}
where $\gG' = \augg(\gG)$ is an undirected graph with the same vertex
set $V$ such that
\begin{equation*}
  V_j \udedge V_k \ingraph{\gG'} \Longleftrightarrow V_j
  \colliderconn V_k \ingraph{\gG}~\text{for all}~V_j, V_k \in V, V_j
  \neq V_k.
\end{equation*}
That is, $V_j$ is connected to all vertices in its Markov boundary.
When this map is restricted to DAGs,
this is often known as \emph{moralization} in the literature
because it connects any two parents with the same child
\parencite{lauritzenGraphicalModelsAssociations1989,frydenbergChainGraphMarkov1990}. For
ADMGs, the augmentation criterion below is introduced in
\textcite{richardsonMarkovPropertiesAcyclic2003}.

\begin{definition}
  The augmentation model for $\gG \in \ADMGc(V)$ is defined as
  \begin{align*}
    &\statmodelA(\gG, \mathbb{V}) \\
    =& \{\P \in \statmodel(\mathbb{V}):
    \marg_J(\P) \in \statmodelGM(\augg\circ \marg_J(\gG),
    \marg_J(\mathbb{V}))~\text{for all ancestral}~J \subseteq V\}.
  \end{align*}
\end{definition}

\subsection{Pairwise (PE), clique (CE), and noise (NE) expansions}
\label{sec:pairw-cliq-noise}

One way to define statistical models associated with a general ADMG is
through expanding the graph to ``simpler graphs''. First, let us
define graph expansion, which is simply the pre-image of graph
marginalization. Specifically, given $\gG \in \DMGc(V)$, define
\[
  \expand(\gG) = \{\gG' \in \DMGc(V'): V' \supseteq V, \marg_V(\gG') =
  \gG\}.
\]
Obviously, graph marginalization is not injective, so $\expand(\gG)$
is an infinite set of graphs that can marginalize to $\gG$.

There are several possible ``natural'' definitions that pick a
specific element of $\expand(\gG)$ as ``the'' expanded graph. Consider
$V = \{V_1,\dots,V_{d}\}$ and $\gG \in \DMGc(V)$. The \emph{pairwise
  expansion} replaces every bidirected edge by a latent common
parent. Formally, the pairwise expansion graph $\expandP(\gG)$ has
vertex set $V \cup E$ with $E = \{E_{jk}: V_j \bdedge V_k
\ingraph{\gG}, j < k\}$ and the following edges:
\begin{align*}
  E_{jk} \rdedge V_j, E_{jk} \rdedge V_k
  \ingraph{\expandP(\gG)},&~\text{for all}~1 \leq j < k \leq
                            d~\text{such that}~V_j \bdedge V_k
                            \ingraph{\gG}, \\
  V_j \rdedge V_k \ingraph{\expandP(\gG)},&~\text{for all}~j, k \in [d]
                                            ~\text{such that}~V_j \rdedge V_k
                                            \ingraph{\gG}.
\end{align*}
The \emph{clique expansion}
replaces every bidirected clique (in which every two vertices are connected by a
bidirected edge) by a latent common parent. Formally, if we let
$\mathcal{C}(\gG)$ denote (the vertex indices of) all bidirected
cliques in $\gG$, that is,\footnote{One can also define bidirected
  cliques as the \emph{maximal} sets connected by bidirected edges in
  the graph, but that does not change the clique expansion model. The
  definition employed here simplifies our proof in the Appendix that
  the clique expansion model is complete (particularly
  \Cref{lem:margin-CE}).}
\[
  \mathcal{C}(\gG) = \{\sJ \subseteq 2^{[d]}: V_j \bdedge V_k~\text{for all}~j,k
  \in \sJ\},
\]
then the clique expansion graph $\expandC(\gG)$ has vertex set $V\cup
E$ with $E = \{E_{\sJ}: \sJ \in \mathcal{C}(\gG)\}$ and the following edges:
\begin{align*}
  E_{\sJ} \rdedge V_j \ingraph{\expandC(\gG)},&~\text{for all}~j \in
                                                \sJ \in \mathcal{C}(\gG), \\
  V_j \rdedge V_k \ingraph{\expandC(\gG)},&~\text{for all}~j, k \in [d]
                                            ~\text{such that}~V_j \rdedge V_k
                                            \ingraph{\gG}.
\end{align*}
It is easy to see that pairwise and clique expansion graphs are DAGs.

The \emph{noise expansion}, on the other hand, results in an unconfounded
graph where the bidirected and directed edges are
``separated''. Formally, the noise expansion graph $\expandN(\gG)$ has
vertex set $V \cup E$ with $E = \{E_1, \dots, V_d\}$ and the following edges:
\begin{align*}
  E_j \rdedge V_j \ingraph{\expandN(\gG)},&~\text{for all}~j \in [d],
  \\
  E_j \bdedge E_k \ingraph{\expandN(\gG)},&~\text{for all}~j, k \in
                                            [d]~\text{such that}~V_j
                                            \bdedge V_k \ingraph{\gG}, \\
  V_j \rdedge V_k \ingraph{\expandN(\gG)},&~\text{for all}~j, k \in [d]
                                            ~\text{such that}~V_j \rdedge V_k
                                            \ingraph{\gG}.
\end{align*}

\begin{definition}
  For $\gG = (V, \sB, \sD) \in \ADMGc(V)$, the \emph{pairwise
    expansion model}, \emph{clique expansion model}, and \emph{clique
    expansion model} are defined as the $V$-marginal of the global
  Markov model for the corresponding expanded graphs:
  \begin{align*}
    \statmodelPE(\gG, \mathbb{V}) &=
                                    \marg_V\Big(\statmodelGM\big(\expandP(\gG), \mathbb{V} \times
                                    [0,1]^{|\sB|}\big)\Big), \\
    \statmodelCE(\gG, \mathbb{V}) &=
                                    \marg_V\Big(\statmodelGM\big(\expandC(\gG), \mathbb{V} \times
                                    [0,1]^{|\mathcal{C}(\gG)|}\big)\Big),
    \\
    \statmodelNE(\gG, \mathbb{V}) &=
                                    \marg_V\Big(\statmodelGM\big(\expandN(\gG), \mathbb{V} \times
                                    [0,1]^{|V|}\big)\Big).
  \end{align*}
\end{definition}

This definition assumes that the latent variables are all supported on the
unit interval, which is large enough for most purposes.

\subsection{Nonparametric equation (E) systems}
\label{sec:nonp-syst}


\begin{definition}[Nonparametric system] \label{def:np-system}
  Consider $\gG \in \ADMGc(V)$. The \emph{nonparametric equation
    system}  $\mathbb{P}_{\textnormal{E}}(\gG, \mathbb{V})$ collects
  all probability distribution $\P \in \statmodel(\mathbb{V})$ on a
  random vector $V = (V_1,\dots,V_d)$ such that the following event
  has probability $1$ under $\P$: $V$ solves the equations
  \begin{equation}
    \label{eq:np-system}
    V_j = f_j(V_{\pa_{\gG}(j)}, E_j),~j=1,\dots,d
  \end{equation}
  for some (measurable) functions $f_j: \mathbb{V}_{\pa_{\gG}(j)}
  \times [0,1] \to
  \mathbb{V}_j$, $j=1,\dots,d$ and random vector
  $E = (E_1,\dots,E_{d}) \in [0,1]^d$ whose joint distribution $\Q$ is
  unconditionally Markov with respect to the bidirected component of
  $\gG$, that is, for all disjoint $\sJ, \sK \subset [d]$, we have
  \begin{equation}
    \label{eq:independent-error}
    V_{\sJ} \nobdedge V_{\sK} \ingraph{\gG}
    \Longrightarrow E_{\sJ} \independent E_{\sK} \underdist{\Q}.
  \end{equation}
\end{definition}

This definition is closely related to the ``semi-Markovian'' causal model
in \textcite[p.\ 30]{pearlCausality2009}, but there are some subtle
distinctions. First, Pearl does not explicitly state
\eqref{eq:independent-error} as the Markov condition on the
distribution of the noise variables and just calls the model
semi-Markovian if the noises are correlated. In another definition of
semi-Markovian models, \textcite[p.\
542-543]{bareinboimPearlHierarchyFoundations2022}
define its causal diagram by adding a bidirected edge between $V_j$
and $V_k$ if the corresponding noise variables are
correlated. However, equation \eqref{eq:independent-error} is
stronger: it further requires that the pairwise independence relationships
can be combined (so the conditional independences form a
compositional semi-graphoid). Second, Pearl intends to
interpret \eqref{eq:np-system} not just as a statistical model but
also as a causal model. A formal treatment of causal models
usually requires the potential outcomes of $V$ under
interventions. This is investigated in \Cref{sec:causal-model} below.

\section{Causal Markov model and the nested Markov property}
\label{sec:causal-model}

The nonparametric equation system gives a natural definition of
potential outcomes using recursive substitution
\parencite{pearlCausality2009,richardson2013single}. In this Section,
we will introduce this causal model and use it to prove that the
nonparametric equation system is nested Markov as formally stated below.

\begin{theorem} \label{thm:admg-ne-nm}
  For $\gG \in
\ADMGc(V)$ and any product space $\mathbb{V}$, we have
  $\statmodelE
  (\gG, \mathbb{V}) \subseteq
  \statmodelNM
  (\gG,\mathbb{V})$. 
  In other words, the implication
  $\text{E} \Rightarrow \text{NM}$ in
  \Cref{fig:relation-statmodel-admg} holds.
\end{theorem}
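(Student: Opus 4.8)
The plan is to route the argument through a potential-outcome (causal Markov) reading of the equation system and to identify the fixing operation with intervention. Starting from $\P \in \statmodelE(\gG,\mathbb{V})$, fix a representation $V_j = f_j(V_{\pa_{\gG}(j)}, E_j)$ with noise vector $E \sim \Q$ as in \Cref{def:np-system}. For any $\sJ \subseteq [d]$ and any value $v_{\sJ}$, recursive substitution into these equations along a topological order of $\gG$ defines the potential outcome $V_k(v_{\sJ})$ for each $k \notin \sJ$; since every $V_k(v_{\sJ})$ is a measurable function of the single noise vector $E$, all these potential outcomes live on one probability space, which is the causal model generated by $\P$. Writing $V_{-\sJ}$ for the vector of non-fixed vertices, the central task is to show that whenever $V_{\sJ}$ is fixable, the fixed density coincides with the interventional density of $(V_k(v_{\sJ}))_{k \notin \sJ}$, i.e. $\fix_{V_{\sJ}=v_{\sJ}}(\p)(v_{-\sJ})$ equals the density under $\splitg(V_{\sJ}=v_{\sJ})$. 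This is exactly the invariance-under-fixing identity \eqref{eq:invariance-fixing} (\Cref{prop:fixing}).

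I expect this identification to be the main obstacle. For a single fixable vertex $V_j$ (no descendant of $V_j$ lies in $\dis_{\gG}(V_j)$), one must verify that dividing $\p(v)$ by $\p(v_j \mid v_{\mbg_{\gG}(j)})$ reproduces the truncated structural density; fixability is precisely what makes $\p(v_j \mid v_{\mbg_{\gG}(j)})$ the correct factor to remove and what guarantees that the resulting interventional law is again of equation-system form rather than being non-identified. The general case then follows by induction along a fixable sequence, using that fixing one vertex of a conditional ADMG again yields a conditional ADMG, that the intermediate interventional distribution is again an equation system (so the single-vertex argument applies verbatim), and that the composite operator is order-independent.

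Given the identification, the proof closes quickly. After fixing a fixable $V_{\sJ}$, the kernel $\fix_{V_{\sJ}}(\P)$ is the law of $V_{-\sJ}(v_{\sJ})$ viewed as a function of the context $v_{\sJ}$, and by the previous step it is generated by the truncated system $V_k = f_k(V_{\pa_{\gG}(k)}, E_k)$, $k \notin \sJ$, with the intervened values substituted for any parents in $\sJ$ and with noise $E_{-\sJ}$ distributed as the corresponding marginal of $\Q$. The direct-edge (compositional) independences demanded by \eqref{eq:independent-error} are inherited by marginals, and fixing leaves the bidirected edges among the non-fixed vertices unchanged; hence this marginal is unconditionally Markov with respect to the bidirected structure that $\fix_{V_{\sJ}}(\gG)$ induces on $V_{-\sJ}$, and the truncated system is an equation system for the conditional ADMG $\fix_{V_{\sJ}}(\gG)$ with $v_{\sJ}$ treated as free context.

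Finally I would invoke the $\text{E} \Rightarrow \text{GM}$ direction for conditional ADMGs, which is provable directly from the recursive structure of an equation system and is independent of the nested Markov property: the law of $V_{-\sJ}(v_{\sJ})$, read as a kernel indexed by $v_{\sJ}$, obeys the global Markov property of $\fix_{V_{\sJ}}(\gG)$. Because the extra bidirected edges that $\widetilde{\fix}_{V_{\sJ}}(\gG)$ places among the vertices of $V_{\sJ}$ can only connect more vertices, every m-separation $\textnot V_{\sK} \mconn V_{\sL} \mid V_{\sM} \ingraph{\widetilde{\fix}_{V_{\sJ}}(\gG)}$ is also a separation in $\fix_{V_{\sJ}}(\gG)$ and therefore yields the corresponding independence under $\fix_{V_{\sJ}}(\P)$; this is the role of the restriction that at most one of $V_{\sK}, V_{\sL}$ meets $V_{\sJ}$. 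Verifying \eqref{eq:cadmg-global-markov} for every fixable $V_{\sJ}$ is the definition of $\P$ being nested Markov, so $\statmodelE(\gG, \mathbb{V}) \subseteq \statmodelNM(\gG, \mathbb{V})$.
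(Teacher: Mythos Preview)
Your proposal is correct in its overall architecture and shares the paper's opening moves: lift $\P$ to a causal Markov distribution via recursive substitution (\Cref{lem:npsem-causal}), then identify the fixing operator with intervention so that $\fix_{V_{\sJ}=v_{\sJ}}(\p)$ is the density of $V_{-\sJ}(v_{\sJ})$ (\Cref{prop:fixing}, equation \eqref{eq:invariance-fixing}). You correctly flag the single-vertex case of this identification as the crux, and your inductive extension along a fixable sequence matches the paper's sketch.

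The divergence is in how you close the argument. The paper does not pass through the statement ``the fixed kernel is an E-system for the conditional ADMG, hence global Markov.'' Instead it invokes the equivalence of the extended global and local Markov properties for conditional ADMGs (Theorem 16 of \textcite{richardsonNestedMarkovProperties2023}) and then verifies the local property \eqref{eq:local-nested-markov} by a direct computation on $\fix_{V_{\sJ}=v_{\sJ}}(\p_V)(v_k \mid v_{\sL\setminus(\sJ\cup\{k\})})$: apply \eqref{eq:invariance-fixing}, then consistency (\Cref{prop:consistency-po}) to move from the intervention on $V_{\sJ}$ to the intervention on $V_{\sJ \cup \sL \setminus \{k\}}$, then the global Markov property of the SWIG (\Cref{prop:causal-imply-ordinary-markov}) to drop everything outside the district, and finally read off that only $v_{\mbg_{\tilde\gG_L}(k)}$ survives. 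Your route is more structural and arguably cleaner to state, but the lemma you defer to---$\text{E}\Rightarrow\text{GM}$ for \emph{conditional} ADMGs, including the extended independences where $V_{\sL}$ meets $V_{\sJ}$---is not available off the shelf in the paper. Proving it means showing that the kernel's conditional density is constant in certain coordinates of $v_{\sJ}$, which is exactly what \Cref{prop:no-direct-effect} and \Cref{prop:causal-imply-ordinary-markov} supply; so in the end your modular step unpacks to the same ingredients the paper uses directly. Your observation that the extra bidirected edges in $\widetilde{\fix}_{V_{\sJ}}(\gG)$ only weaken separation is correct but handles only the graphical side; the probabilistic side (why m-separation from a \emph{fixed} vertex kills dependence of the kernel on its value) is where the work lies.
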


\subsection{Causal model}
\label{sec:causal-model-1}

Let us first define what we mean by a causal model. We have used
``statistical model'' to refer to a collection of probability
distributions on a set of random variables. Likewise, a causal model
is a collection of probability distribution on all ``potential
outcomes''. Specifically, let the random variable $V_j(v_{\sI})$
denote the \emph{potential outcome} of $V_j$ under an intervention
that sets $V_{\sI}$ to $v_{\sI}$, $j \in [d]$, $\sI \subseteq
[d]$. The \emph{potential outcome schedule} $V(\cdot)$ is the
collection of all potential outcomes:
\[
  V(\cdot) = (V_j(v_{\sI}): j \in [d], \sI \subseteq [d], v_{\sI} \in
  \mathbb{V}_{\sI}).
\]
Let $\mathbb{V}(\cdot) = \mathbb{V}^{\prod_{\sI \subseteq [d]}
  \mathbb{V}_{\sI}}$ denote the range of $V(\cdot)$.
Let $\statmodel(\mathbb{V}(\cdot))$ denote the largest statistical
model on the potential outcomes schedule, so for all $\P \in
\statmodel(\mathbb{V}(\cdot))$ we have $\marg_{V_j(v_{\sI})}(\P)
\in \statmodel(\mathbb{V}_j)$ (i.e.\ $V_j(v_{\sI})$ takes value in
$\mathbb{V}_j$) for all $V_{\sI} \subseteq V, V_j \in V$.

\begin{definition}[Causal model]
  We say $\P \in \statmodel(\mathbb{V}(\cdot))$ is
  \emph{causal} if the following \emph{consistency} property holds for
  all disjoint $V_{\sI}, V_{\sI'} \subset V$ and $v \in \mathbb{V}$
  such that $\p(V_{\sI'}(v_{\sI}) = v_{\sI'}) > 0$:
  \begin{equation}
    \label{eq:consistency}
    \P(V(v_{\sI}, v_{\sI'}) = V(v_{\sI}) \mid V_{\sI'}(v_{\sI}) =
    v_{\sI'}) = 1,
  \end{equation}
  Let $\causalmodel(\mathbb{V})$ denote all such probability
  distributions. We say a subset of $\causalmodel(\mathbb{V})$ is a
  \emph{causal model}.
\end{definition}

Thus, a causal model is a statistical model on the potential outcomes
schedule that satisfies the consistency property
\eqref{eq:consistency}. This property is not new and can be found in
\textcite{malingsky2009potential}. It
generalizes the usual notion of consistency or stable unit treatment
value \parencite{rubin80comment} in causal inference which
says the observed outcome is the same as the potential outcome under
an intervention that ``sets'' a treatment to its observed value. Note
that this definition of causal model does not depend on any graphical
representation.

\subsection{Causal Markov model}
\label{sec:caus-model-assoc}

Consider an ADMG $\gG \in \ADMGc(V)$ and a nonparametric equation
system as given in \Cref{def:np-system}. Roughly speaking, we can
interpret the equations in
\eqref{eq:np-system} causally by requiring that those equations
still hold in an intervention that sets some of the variables to a
fixed value. 

\begin{definition}[Causal Markov model] \label{def:causal-markov}
 We say a distribution $\P \in \statmodel(\mathbb{V}(\cdot))$ is
 \emph{causal Markov} with respect to $\gG \in \ADMGc(V)$ if the
 following are true:
 \begin{enumerate}
  \item The potential outcomes are consistent with respect to $\gG$ in
    the sense that the next event has $\P$-probability $1$:
\begin{equation}
  \label{eq:basic-consistency}
  V_j(v_{\sI}) = V_j(v_{\pa_{\gG}(j) \cap \sI}, V_{\pa_{\gG}(j) \setminus
    \sI}(v_{\sI})),~\text{for all}~j \in [d], \sI \subseteq [d], v \in
  \mathbb{V}.
\end{equation}
 \item The distribution of the basic potential
outcomes is unconditionally Markov with respect to the bidirected component of
$\gG$, that is, for all disjoint $\sJ, \sK \subset [d]$, we
have
\begin{equation}
    \label{eq:independent-basic-po}
     V_{\sJ} \nobdedge V_{\sK} \ingraph{\gG}
    \Longrightarrow V_{\sJ}(v) \independent V_{\sK}(v)
    \underdist{\P}~\text{for all}~v \in \mathbb{V}.\footnote{Note that
  \eqref{eq:independent-error} implies more than
\eqref{eq:independent-basic-po}: the conditional
independence $V_{\sJ}(v) \independent V_{\sK}(v') \mid V_{\sL}(v'')$
is also true for all $v, v', v'' \in \mathbb{V}$ that are not the
same. We choose to not include these ``cross-world'' independences here
because, as argued by \textcite{richardson2013single}, they cannot
possibly be verified by any experiment.}
  \end{equation}
 \end{enumerate}
  The \emph{causal Markov model} associated with $\gG$ is then
  defined as
  \[
    \causalmodel(\gG, \mathbb{V}) = \{\P \in \statmodel(\mathbb{V}(\cdot)):
    \text{$\P$ is causal Markov with respect to $\gG$} \}.
  \]
\end{definition}

We will see in \Cref{prop:consistency-po} below that
$\causalmodel(\gG, \mathbb{V}) \subseteq \causalmodel(\mathbb{V})$, so
it is well justified to call $\causalmodel(\gG, \mathbb{V})$ a causal model.
This definition generalizes the single-world causal model introduced by
\textcite{richardson2013single} in two ways: first, the causal diagram
can be an ADMG instead of just a DAG; second, the primitive objects in
this definition are potential outcomes instead of structural
equations.\footnote{This is already hinted in \textcite[Definition
  1]{richardson2013single}.}

Note that the directed and bidirected edges play different roles in
this definition. The directed edges represent
direct causal effects, and the bidirected edges represent
exogenous correlation. Importantly, this model does not assume that the
exogenous correlations arise from latent common causes. In the
author's opinion, this is more transparent than the approach taken in
\textcite[Section 4.3]{richardsonNestedMarkovProperties2023} and
implicitly taken in Pearl's work that assumes a causal model with
respect to some unspecified DAG expansion of the ADMG. It is difficult
to conceptualize potential outcomes of the latent variables without
knowing what they are. In contrast, the causal Markov model above only
requires potential outcomes of the variables in the ADMG. See
\Cref{sec:discussion} for further discussion.

The equations in the E model (see \Cref{def:np-system}) give a natural
definition of potential outcomes via the following recursion:
\begin{equation}
  \label{eq:npsem}
  V_j(v_{\sI}) = f_j(v_{\pa(j) \cap \sI}, V_{\pa(j) \setminus
    \sI}(v_{\sI}), E_j),~j=1,\dots,d.
\end{equation}
The distribution of the potential outcome schedule is then entirely
determined by the functions $f_1,\dots,f_d$ and the distribution of
the noise variables $E_1,\dots,E_d$. This is often referred to as the
\emph{structural equation model} \parencite{pearlCausality2009} or
\emph{structural causal model}
\parencite{petersElementsCausalInference2017,bareinboimPearlHierarchyFoundations2022},
although the assumption on the distribution of $E$ is not always
clearly stated; see the remarks in \Cref{sec:nonp-syst}. The
distribution of potential outcomes defined via
\eqref{eq:npsem} is causal Markov with respect to $\gG$:
\eqref{eq:basic-consistency} immediately follows from
\eqref{eq:npsem}, and
\eqref{eq:independent-basic-po} immediately follows from
\eqref{eq:independent-error}. We summarize this observation as a Lemma.\footnote{Note that
  our definition of causal Markov model is a collection of
probability distributions on the potential outcomes schedule and does
not require defining potential outcomes via structural equations. It is
natural to ask if this is indeed more general, that is, whether the
reverse of \Cref{lem:npsem-causal} is true. It is observed in \textcite[p.\
    22]{richardson2013single} that one can use the potential outcomes
    to define structural equations as
  \[
    f_j(v_{\pa(j)}, E_j) = V_j(v_{\pa(j)}),~j=1,\dots,d,
  \]
  where $E_j = (V_j(v_{\pa(j)}): v_{\pa(j)} \in \mathbb{V}_{\pa(j)})$
  collects all basic potential outcomes for $V_j$. However, the range
  of $E_j$ is $\mathbb{V}_j^{\mathbb{V}_{\pa(j)}}$, whose cardinality
  is not always the same as that of $[0,1]$ (i.e.\ the
  continuum). Furthermore, independence of the ``noise'' in
  \eqref{eq:independent-error} does not directly follow from
  single-world independence of the potential outcomes in
  \eqref{eq:independent-basic-po}.}

\begin{lemma} \label{lem:npsem-causal}
  For any $\gG \in \ADMGc(V)$ and product space $\mathbb{V}$, we have
  $\statmodelE(\gG, \mathbb{V}) \subseteq \marg_V(\causalmodel(\gG,
  \mathbb{V}))$.
\end{lemma}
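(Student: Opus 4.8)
The plan is to exhibit, for each $\P \in \statmodelE(\gG, \mathbb{V})$, a distribution on the full potential outcome schedule that lies in $\causalmodel(\gG, \mathbb{V})$ and whose factual marginal is exactly $\P$. This is the construction already sketched in the paragraph preceding the Lemma, so the substance is to check that it is well defined and that each clause of \Cref{def:causal-markov} holds.

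First I would unpack the hypothesis: by \Cref{def:np-system}, membership of $\P$ in $\statmodelE(\gG, \mathbb{V})$ supplies witnessing functions $f_1, \dots, f_d$ and a noise vector $E = (E_1, \dots, E_d) \sim \Q$ obeying the independence constraint \eqref{eq:independent-error}, with $\P$-almost surely $V_j = f_j(V_{\pa_{\gG}(j)}, E_j)$ for all $j$. Using these same $f_j$ and $\Q$, I would define the schedule $V(\cdot)$ through the recursion \eqref{eq:npsem}, processing the vertices in a topological order of $\gG$. Acyclicity guarantees that the recursion terminates and determines each $V_j(v_{\sI})$ as a measurable function of $E$ alone; pushing $\Q$ forward along $E \mapsto V(\cdot)$ then yields a distribution $\tilde{\P}$ on $\mathbb{V}(\cdot)$, with every coordinate landing in the correct $\mathbb{V}_j$ because $f_j$ maps into $\mathbb{V}_j$.

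Next I would verify that $\tilde{\P}$ is causal Markov with respect to $\gG$. Clause \eqref{eq:basic-consistency} is immediate, since the right-hand side of the defining recursion \eqref{eq:npsem} is precisely the recursive substitution appearing there (both sides apply $f_j$ to the same parent arguments). For clause \eqref{eq:independent-basic-po}, I would observe that under a full intervention $v = v_{[d]}$ every parent is set externally, so the basic potential outcome collapses to $V_j(v) = f_j(v_{\pa_{\gG}(j)}, E_j)$, a function of $E_j$ alone. Hence for disjoint $\sJ, \sK$ the vectors $V_{\sJ}(v)$ and $V_{\sK}(v)$ are measurable functions of $E_{\sJ}$ and $E_{\sK}$ respectively, and whenever $V_{\sJ}$ and $V_{\sK}$ have no bidirected edge between them the independence $E_{\sJ} \independent E_{\sK}$ provided by \eqref{eq:independent-error} transfers directly to $V_{\sJ}(v) \independent V_{\sK}(v)$. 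Note that only these two clauses are required here; the general consistency property \eqref{eq:consistency} is deferred to \Cref{prop:consistency-po} and is not needed for membership in $\causalmodel(\gG, \mathbb{V})$.

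The step I expect to require the most care is confirming that $\marg_V(\tilde{\P}) = \P$, that is, that the factual coordinates (obtained from \eqref{eq:npsem} with $\sI = \emptyset$) recover the original law rather than merely some law in the same model. Setting $\sI = \emptyset$ shows that the factual vector solves the same system $v_j = f_j(v_{\pa_{\gG}(j)}, e_j)$ that $V$ solves $\P$-almost surely. Because $\gG$ is acyclic, this system has a unique solution for each fixed value of the noise---obtained by solving the equations in topological order---so there is a single measurable solution map $\phi$ with the factual vector equal to $\phi(E)$ by construction and, under $\P$, $V = \phi(E)$ almost surely. Since both $(V, E)$ under $\P$ and $(V(\cdot), E)$ under $\tilde{\P}$ carry the same noise law $\Q$ and agree with $\phi(E)$ almost surely, their $V$-marginals coincide, giving $\marg_V(\tilde{\P}) = \P$ and hence $\P \in \marg_V(\causalmodel(\gG, \mathbb{V}))$.
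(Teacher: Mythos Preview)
Your proposal is correct and follows exactly the approach the paper sketches in the paragraph immediately preceding the Lemma: define the schedule via the recursion \eqref{eq:npsem}, observe that \eqref{eq:basic-consistency} is that recursion rewritten, and derive \eqref{eq:independent-basic-po} from \eqref{eq:independent-error} since $V_j(v)=f_j(v_{\pa(j)},E_j)$ depends only on $E_j$. Your treatment is in fact more thorough than the paper's, which leaves implicit the check that the factual marginal $\marg_V(\tilde\P)$ recovers the original $\P$; your argument via the unique solution map $\phi$ determined by a topological order is the right way to close that gap.
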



\subsection{Properties of the causal Markov model}
\label{sec:caus-ident}

We will next introduce four key properties of the causal Markov model
and use them to prove \Cref{thm:admg-ne-nm}. The first property
justifies calling $\causalmodel(\gG, \mathbb{V})$ a causal model.

\begin{proposition} \label{prop:consistency-po}
  For any $\gG \in \ADMGc(V)$ and product space $\mathbb{V}$, we have
  $\causalmodel(\gG, \mathbb{V}) \subseteq \causalmodel(\mathbb{V})$.
\end{proposition}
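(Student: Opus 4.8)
The plan is to derive the consistency property \eqref{eq:consistency} directly from the basic consistency \eqref{eq:basic-consistency}; the independence condition \eqref{eq:independent-basic-po} plays no role, since consistency is a purely structural (within-world) property. Fix disjoint $V_{\sI}, V_{\sI'} \subset V$ and $v \in \mathbb{V}$ with $\p(V_{\sI'}(v_{\sI}) = v_{\sI'}) > 0$, and abbreviate the conditioning event as $\Omega_0 = \{V_{\sI'}(v_{\sI}) = v_{\sI'}\}$. Since $V(v_{\sI}, v_{\sI'}) = V(v_{\sI})$ is the conjunction over $j \in [d]$ of the scalar identities $V_j(v_{\sI}, v_{\sI'}) = V_j(v_{\sI})$, and a finite intersection of probability-one events is again probability one, it suffices to show that each such scalar identity holds $\P$-almost surely on $\Omega_0$.

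First I would fix a topological order $\prec$ of $\gG$ and induct on it, proving $V_j(v_{\sI}, v_{\sI'}) = V_j(v_{\sI})$ a.s.\ on $\Omega_0$ under the hypothesis that the analogous identity holds for every $V_k \prec V_j$. If $V_j \in \sI$, both sides equal $v_j$; if $V_j \in \sI'$, both sides equal $v_j$ on $\Omega_0$ (the left by the $\sI'$-intervention, the right because $\Omega_0$ forces $V_j(v_{\sI}) = v_j$). The substantive case is $V_j \notin \sI \cup \sI'$, where I apply \eqref{eq:basic-consistency} to each side:
\begin{align*}
  V_j(v_{\sI}, v_{\sI'}) &= V_j\big(v_{\pa_{\gG}(j) \cap (\sI \cup \sI')}, V_{\pa_{\gG}(j) \setminus (\sI \cup \sI')}(v_{\sI}, v_{\sI'})\big), \\
  V_j(v_{\sI}) &= V_j\big(v_{\pa_{\gG}(j) \cap \sI}, V_{\pa_{\gG}(j) \setminus \sI}(v_{\sI})\big).
\end{align*}

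The heart of the argument, and the step I expect to be the main obstacle, is a substitution principle: the right-hand sides above are the \emph{single} basic potential-outcome map $w_{\pa_{\gG}(j)} \mapsto V_j(w_{\pa_{\gG}(j)})$ evaluated at two (partly random) arguments, so it suffices to check that those arguments coincide a.s.\ on $\Omega_0$, coordinate by coordinate over $V_k \in \pa_{\gG}(j)$. For $V_k \in \pa_{\gG}(j) \cap \sI$ both arguments are $v_k$; for $V_k \in \pa_{\gG}(j) \cap \sI'$ the first argument is $v_k$ (intervention) while the second is $V_k(v_{\sI})$, which equals $v_k$ on $\Omega_0$; and for $V_k \in \pa_{\gG}(j) \setminus (\sI \cup \sI')$ the arguments are $V_k(v_{\sI}, v_{\sI'})$ and $V_k(v_{\sI})$, which agree a.s.\ on $\Omega_0$ by the induction hypothesis since $V_k \prec V_j$. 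Hence the arguments coincide and $V_j(v_{\sI}, v_{\sI'}) = V_j(v_{\sI})$ a.s.\ on $\Omega_0$, closing the induction. The delicate point is to justify that the nested notation genuinely denotes composition with one fixed random map of the parents---so that equal realized inputs force equal outputs---rather than a family of unrelated potential outcomes; this is precisely the content of \eqref{eq:basic-consistency}, and making it rigorous requires care with the measure-theoretic meaning of evaluating a random function at a random argument. Finally, since $\P(\Omega_0) > 0$, the almost-sure-on-$\Omega_0$ conclusion yields $\P(V(v_{\sI}, v_{\sI'}) = V(v_{\sI}) \mid V_{\sI'}(v_{\sI}) = v_{\sI'}) = 1$, which is \eqref{eq:consistency}, so every $\P \in \causalmodel(\gG, \mathbb{V})$ lies in $\causalmodel(\mathbb{V})$.
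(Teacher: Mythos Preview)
Your approach is essentially the same as the paper's: apply basic consistency \eqref{eq:basic-consistency} to both $V_j(v_{\sI}, v_{\sI'})$ and $V_j(v_{\sI})$, match the parent arguments coordinatewise on the event $\Omega_0$, and close by induction along a topological order. One minor slip worth flagging: your shortcuts for the cases $V_j \in \sI$ and (the left side of) $V_j \in \sI'$ assume that $V_j(v_{\sI}) = v_j$ whenever $j \in \sI$, but the paper's framework does not posit this---basic consistency \eqref{eq:basic-consistency} only says $V_j(v_{\sI})$ depends on $v_{\sI}$ through the parent coordinates, so in fact $V_j(v_{\sI}) = V_j(v_{\sI \setminus \{j\}})$ rather than $v_j$. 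This is harmless here, since your general-case argument (which the paper applies uniformly to all $V_j \in V$) covers those cases without the extra assumption.
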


The second property allows one to simplify potential outcomes. In
words, it says no directed paths means no causal effect.

\begin{proposition} \label{prop:no-direct-effect}
  Suppose $\P \in \causalmodel(\gG, \mathbb{V})$ for some $\gG \in
  \ADMGc(V)$. For any disjoint $V_{\sJ}, V_{\sK}, V_{\sL} \subseteq
  V$, $V_{\sK} \cap V_{\sL} =
  \emptyset$, we have
  \[
    \textnot V_{\sL} \rdpath V_{\sJ} \mid V_{\sK} \ingraph{\gG}
    \Longrightarrow \P(V_{\sJ}(v_{\sK}, v_{\sL}) =
    V_{\sJ}(v_{\sK})) = 1,~\text{for all}~v_{\sK} \in
    \mathbb{V}_{\sK},v_{\sL} \in \mathbb{V}_{\sL}.
  \]
\end{proposition}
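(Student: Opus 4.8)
The plan is to prove the statement for a single $V_j \in V_{\sJ}$ and then intersect the resulting almost-sure events over the finite set $V_{\sJ}$. The only ingredient needed is the basic consistency property \eqref{eq:basic-consistency}; the independence condition \eqref{eq:independent-basic-po} plays no role, since the claim concerns only the deterministic substitution structure of the potential outcomes. Fix a topological order $\prec$ of $\gG$ and, for brevity, call a vertex $V_m$ \emph{unaffected} if $V_m \notin V_{\sL}$ and $\textnot V_{\sL} \rdpath V_m \mid V_{\sK} \ingraph{\gG}$. I will show by induction along $\prec$ that every unaffected $V_m$ satisfies $V_m(v_{\sK}, v_{\sL}) = V_m(v_{\sK})$ almost surely, for all $v_{\sK} \in \mathbb{V}_{\sK}$ and $v_{\sL} \in \mathbb{V}_{\sL}$.

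The inductive step splits according to whether $V_m$ is intervened. If $V_m \in V_{\sK}$, both potential outcomes are set to the same coordinate of $v_{\sK}$, and equality is immediate. Otherwise $V_m \notin V_{\sK} \cup V_{\sL}$. First observe that $V_m$ can have no parent in $V_{\sL}$: an edge $V_p \rdedge V_m$ with $V_p \in V_{\sL}$ would be a directed path from $V_{\sL}$ to $V_m$ with no non-endpoints, contradicting unaffectedness. Hence $\pa_{\gG}(m) \cap (\sK \cup \sL) = \pa_{\gG}(m) \cap \sK$ and $\pa_{\gG}(m) \setminus (\sK \cup \sL) = \pa_{\gG}(m) \setminus \sK$. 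Applying \eqref{eq:basic-consistency} once with intervention set $\sK \cup \sL$ and once with $\sK$ yields
\begin{align*}
  V_m(v_{\sK}, v_{\sL}) &= V_m\big(v_{\pa_{\gG}(m) \cap \sK},\,
    V_{\pa_{\gG}(m) \setminus \sK}(v_{\sK}, v_{\sL})\big), \\
  V_m(v_{\sK}) &= V_m\big(v_{\pa_{\gG}(m) \cap \sK},\,
    V_{\pa_{\gG}(m) \setminus \sK}(v_{\sK})\big),
\end{align*}
so it suffices to match the non-intervened parent coordinates.

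It remains to verify that each non-intervened parent $V_p \in \pa_{\gG}(m) \setminus \sK$ is itself unaffected and precedes $V_m$ in $\prec$. Precedence holds because $V_p \rdedge V_m \ingraph{\gG}$, and $V_p \notin V_{\sL}$ by the previous paragraph. If there were a directed path from $V_{\sL}$ to $V_p$ with no non-endpoint in $V_{\sK}$, then appending the edge $V_p \rdedge V_m$ and reducing the resulting directed walk to a directed path (acyclicity of $\gG$ guarantees one exists, with internal vertices a subset of the original ones together with $V_p$, all outside $V_{\sK}$) would produce a directed path from $V_{\sL}$ to $V_m$ avoiding $V_{\sK}$, contradicting the unaffectedness of $V_m$. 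Thus $V_p$ is unaffected, and the induction hypothesis gives $V_p(v_{\sK}, v_{\sL}) = V_p(v_{\sK})$ almost surely. Intersecting these events over the finite set $\pa_{\gG}(m) \setminus \sK$ shows that the two parent vectors in the displays agree almost surely, closing the step. Applying the claim to each $V_j \in V_{\sJ}$—unaffected by the set-level hypothesis $\textnot V_{\sL} \rdpath V_{\sJ} \mid V_{\sK} \ingraph{\gG}$ and disjoint from $V_{\sL}$—and intersecting over $V_{\sJ}$ gives the proposition.

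There is no deep obstacle here: the argument is a structural induction whose crux is the observation that unaffectedness is inherited by the non-intervened parents and rules out $V_{\sL}$-parents outright via a length-one path. The only points requiring care are the reduction of a directed walk to a directed path (valid because $\gG$ is acyclic) and the fact that combining finitely many almost-sure coordinatewise equalities is again almost sure, both of which are routine.
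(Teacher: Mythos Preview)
Your proof is correct and follows essentially the same approach as the paper: reduce to a single vertex, observe that the no-directed-path condition implies $\pa(j)\cap\sL=\emptyset$ and is inherited by the non-intervened parents, apply \eqref{eq:basic-consistency} to reduce the claim to the parents, and finish by induction. Your version is more explicit about the induction structure (via a topological order and the ``unaffected'' terminology), but the argument is the same.
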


The third property is that the causal Markov property implies the
global Markov property at different ``levels'' of the potential
outcomes. To formally describe this, let us generalize the definition
of single world intervention graphs (SWIGs) in
\textcite{richardson2013single} from
DAGs to ADMGs. Given $\gG \in \ADMGc(V)$, let $\gG(v_{\sI})$ denote
the graph obtained by removing all outgoing edges from $V_{\sI}$
(i.e.\ edges like $V_{\sI} \rdedge \ast$) and relabeling $V_j$ as
$V_j(v_{\sI})$ for all $V_j \in V$.\footnote{We do not consider the
  ``fixed vertex'' $v_i$ for $i \in \sI$ as in
  \textcite{richardson2013single}, because we
  are only interested in the distribution of $V(v_{\sI})$ here.} Let
$V_{-j}$ denote the complement of $V_j$ in $V$ and $V_{-\sJ}$ denote
the complement of $V_{\sJ}$.

The next Proposition generalizes similar results for DAGs in the
literature, for example, Theorem 1.4.1 in
\textcite{pearlCausality2009} and Proposition 11 in
\textcite{richardson2013single}.

\begin{proposition} \label{prop:causal-imply-ordinary-markov}
  Suppose $\P \in \causalmodel(\gG, \mathbb{V})$ for some $\gG \in
  \ADMGc(V)$. Then $\marg_{V(v_{\sI})}(\P) \in
  \statmodelGM(\gG(v_{\sI}), \mathbb{V})$ for all $V_{\mathcal{I}}
  \subseteq V$ and $v \in \mathbb{V}$.
\end{proposition}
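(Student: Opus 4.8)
The plan is to establish that the marginal law of $V(v_{\sI})$ satisfies the \emph{ordered local Markov property} with respect to $\gG(v_{\sI})$, rather than attacking m-separation directly. By \Cref{thm:relation-statmodel} (the equivalence $\text{LM} \Leftrightarrow \text{GM}$ in \Cref{fig:relation-statmodel-admg}, which holds for \emph{all} ADMGs, and note $\gG(v_{\sI})$ is again an ADMG), this suffices to conclude $\marg_{V(v_{\sI})}(\P) \in \statmodelGM(\gG(v_{\sI}), \mathbb{V})$. The advantage is that the local Markov property is a collection of single-vertex conditional independences indexed by a vertex $V_j$ and an ancestral set $K \subseteq \text{pre}_{\prec}(V_j)$ containing $V_j$ but none of its children, so I can argue one vertex at a time along a topological order of $\gG$ (which, since removing outgoing edges from $V_{\sI}$ and relabeling creates no cycles and no new ancestral relations, is also a topological order of $\gG(v_{\sI})$).

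The first step is to record the recursive structure. Writing $W_j = V_j(v_{\sI})$ and applying the consistency property \eqref{eq:basic-consistency}, each vertex satisfies
\[
  W_j = V_j\bigl(v_{\pa_{\gG}(j)\cap\sI},\, W_{\pa_{\gG}(j)\setminus\sI}\bigr),
\]
so $W_j$ is the basic potential outcome (``mechanism'') $V_j(\cdot)$ evaluated at the parental configuration in which intervened parents carry their fixed values and the rest carry their realized values. \Cref{prop:no-direct-effect} guarantees that this mechanism depends only on the parents, so the directed part of $\gG(v_{\sI})$ faithfully encodes the functional dependencies. For a fixed $V_j$ and an ancestral $K$ of the required form, ancestrality forces the restricted recursion to close within $K$, so $(W_k)_{k \in K}$ is a measurable function of the mechanisms $\{V_k(\cdot) : k \in K\}$ alone, and within the induced subgraph $\gG(v_{\sI})_K$ the vertex $V_j$ is childless.

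The core step is then to verify the single local statement $W_j \independent K \setminus \mb_{\gG(v_{\sI})}(V_j, K) \setminus V_j \mid \mb_{\gG(v_{\sI})}(V_j, K)$. Since $V_j$ is childless in $\gG(v_{\sI})_K$, its Markov boundary consists exactly of the vertices collider-connected to it, namely its district together with the parents of that district. The plan is to invoke the single-world independence \eqref{eq:independent-basic-po}: the mechanism $V_j(\cdot)$ is independent of the mechanisms of every vertex outside the bidirected district of $V_j$, and one argues that all dependence of $W_j$ on the non-boundary vertices of $K$ is routed either through the parents of its district (on which we condition) or through mechanisms shared within the district (absorbed by conditioning on the district-mates), leaving the required residual independence as a single-world statement at the realized configuration.

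The main obstacle is precisely the gap flagged in the footnote to \eqref{eq:independent-basic-po}: that equation supplies only \emph{single-world} independence of the basic potential outcomes, whereas the recursion evaluates each mechanism $V_j(\cdot)$ at the \emph{random} argument $W_{\pa(j)}$, so a naive argument would appear to demand the cross-world independences we have deliberately excluded. The crux is to show these are unnecessary for single-world global Markovness: because every parent of $V_j$ is strictly earlier in the topological order, the realized values $W_{\pa(j)}$ are measurable functions of mechanisms attached to ancestors of $\pa(j)$, and in particular never involve the mechanism $V_j(\cdot)$ itself. Carefully tracking, for each realized value, exactly which mechanisms it is measurable with respect to, and checking the district bookkeeping so that the independence ultimately invoked is only the single-world relation \eqref{eq:independent-basic-po} at the observed configuration, is the technical heart of the proof; the remainder is routine manipulation using \Cref{prop:no-direct-effect} and the topological order.
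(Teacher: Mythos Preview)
Your proposal has a genuine gap at exactly the point you flag as ``the technical heart of the proof.'' In the core-step paragraph you write that ``the mechanism $V_j(\cdot)$ is independent of the mechanisms of every vertex outside the bidirected district of $V_j$,'' but \eqref{eq:independent-basic-po} does \emph{not} give you independence of the random functions $v_{\pa(j)}\mapsto V_j(v_{\pa(j)})$ across $j$; it gives only, for each \emph{fixed} $v$, independence of the scalars $V_{\sJ}(v)$ and $V_{\sK}(v)$. You acknowledge this in the next paragraph, but your proposed fix (``tracking which mechanisms each realized value is measurable with respect to'' so that one can invoke single-world independence ``at the observed configuration'') does not close the gap. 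Conditioning on $W_{\mb}=w_{\mb}$ is conditioning on an event that involves mechanisms of many vertices evaluated at \emph{random} parental values, and the fact that $W_{\pa(j)}$ never involves $V_j(\cdot)$ is irrelevant: the issue is whether, after this conditioning, the distribution of $V_j(w_{\pa(j)})$ is unaffected by the remaining variables, and that is not a statement at any single fixed world.

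The paper's proof resolves this by a device you do not anticipate: it runs a \emph{reverse} induction on $|\sI|$. The base case is the fully-intervened world $\sI=[d]$, where $\gG(v)$ is purely bidirected and \eqref{eq:independent-basic-po} is exactly the unconditional Markov property, hence GM by \Cref{fig:relation-statmodel-bdg}. The inductive step (\Cref{lem:preserve-GM}) removes one intervention $V_j$ with $\de_{\gG}(V_j)\subseteq V_{\sI}$: given an m-separation in $\gG(v_{\sI})$, one uses consistency (\Cref{prop:consistency-po}) to absorb the conditioned value of $V_j(v_{\sI})$ into the intervention, landing in the world $v_{\sI'}=v_{\sI\cup\{j\}}$ where GM is already known, and then transports the resulting conditional independence back. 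A short case analysis on whether $V_j$ lies in $V_{\sK}$, $V_{\sL}$, $V_{\sM}$, or none completes the step. Thus the paper never needs mechanism-level independence; it only ever invokes GM at a strictly larger intervention set together with consistency. Your local-Markov reformulation is a legitimate reorganization, but to make it go through you would still need this consistency-based ``absorb the conditioned value into the intervention'' trick, and once you have that, the argument is essentially the paper's induction rephrased.
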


The fourth property establishes the connection between fixability and
causal identification. Mathematically speaking, causal identification refers to
injectivity of the map $\marg_V: \causalmodel(\gG,\mathbb{V}) \to
\statmodel(\mathbb{V})$, that is, it asks whether we can determine the
distribution of the potential
outcomes schedule from the distribution of the observed outcomes. The
next Proposition shows that if a vertex $V_j$ is fixable in $\gG$, then the
distribution of $V(v_j)$ can be identified. This generalizes
Proposition 5 in \textcite{shpitserMultivariateCounterfactualSystems2022a}
(where ADMGs are interpreted as DAGs with latent variables) to the
causal ADMG model in \Cref{def:causal-markov}.

\begin{proposition} \label{prop:fixing}
  Suppose $\P \in \causalmodel(\gG, \mathbb{V})$ for some $\gG \in
  \ADMGc(V)$. If 
  $V_j
  \in V$ is fixable in $\gG$, then
  \begin{equation*}
    \frac{\p(V_j(v_j) = \tilde{v}_j, V_{-j}(v_j) = v_{-j})}{\p(V_j =
      {v}_j, V_{-j} = v_{-j})} = \frac{\p(V_j =
      \tilde{v}_j \mid V_{\mbg(j)} = v_{\mbg(j)})}{\p(V_j =
      v_j \mid V_{\mbg(j)} = v_{\mbg(j)})},~\text{for all $v \in
      \mathbb{V}$ and $v_j^{*} \in \mathbb{V}_j$},
  \end{equation*}
  whenever $\p(V_j = v_j \mid V_{\mbg(j)} = v_{\mbg(j)}) > 0$.
\end{proposition}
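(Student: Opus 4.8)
The plan is to do the identification inside the single-world intervention graph $\gG(v_j)$, exploiting the fact that the node $V_j(v_j)$ is childless there (its outgoing edges have been deleted). Because of this, $V_j(v_j)$ ought to be separated from every other potential outcome by its Markov background, so the joint density of $V(v_j)$ should factor with $\p(V_j(v_j)\mid V_{\mbg(j)}(v_j))$ as the only factor involving $V_j(v_j)$. I write $V_j(v_j)$ for the ``natural value'' of $V_j$ and $V_{-j}(v_j)$ for the intervened responses of the other vertices, so that the numerator on the left is a SWIG density and the denominator is (to be shown) the observational density. The tools are \Cref{prop:causal-imply-ordinary-markov} (global Markov for $V(v_j)$), \Cref{prop:no-direct-effect} (no directed path means no effect), and \Cref{prop:consistency-po} (the consistency property \eqref{eq:consistency}); positivity is assumed throughout so the displayed conditionals are well defined.

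Two graph-theoretic facts drive everything, and the second is where I expect the real work to be. First, deleting the outgoing edges of $V_j$ does not change its Markov background: in any collider-connected walk ending with an arrowhead into $V_j$, every occurrence of $V_j$ is either the terminal endpoint or an internal collider, and in both cases only incoming edges of $V_j$ are used, so no outgoing edge of $V_j$ is ever traversed. Hence $\mbg_{\gG(v_j)}(V_j(v_j))$ is, up to relabelling, exactly $\mbg_\gG(j)$. Second, fixability forces $\mbg_\gG(j)$ to contain no descendant of $V_j$. To see this, take a walk $V_k \markovblanket V_j \ingraph{\gG}$ witnessing $V_k \in \mbg_\gG(j)$ and read it from the $V_j$ end: an internal collider adjacent to $V_j$ must be joined to $V_j$ by a bidirected edge (both endpoints carry arrowheads), and any two adjacent internal colliders are joined by a bidirected edge as well, so all internal colliders lie in $\dis_\gG(V_j)$. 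Therefore $V_k$ is either itself in $\dis_\gG(V_j)$, or a directed parent of some $w \in \dis_\gG(V_j)$. If $V_k \in \de_\gG(V_j)$, the first case gives a descendant of $V_j$ in its own district, while in the second case $w$ is a descendant of $V_j$ in $\dis_\gG(V_j)$ distinct from $V_j$ (acyclicity rules out $w = V_j$); either way this contradicts fixability. Hence $\mbg_\gG(j) \subseteq \nd_\gG(V_j)$.

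With these in hand, I assemble the proof as follows. By \Cref{prop:causal-imply-ordinary-markov}, $\marg_{V(v_j)}(\P) \in \statmodelGM(\gG(v_j),\mathbb{V}) = \statmodelLM(\gG(v_j),\mathbb{V})$ (global and ordered local Markov coincide for ADMGs). Applying the ordered local Markov property to the childless vertex $V_j(v_j)$, with ancestral set all of $V(v_j)$ and a topological order placing $V_j(v_j)$ last, and using the first fact, gives
\[
  \p(V_j(v_j) = \tilde v_j, V_{-j}(v_j) = v_{-j}) = \p(V_j(v_j) = \tilde v_j \mid V_{\mbg(j)}(v_j) = v_{\mbg(j)})\,\p(V_{-j}(v_j) = v_{-j}),
\]
and likewise with $\tilde v_j$ replaced by $v_j$. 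Next, basic consistency \eqref{eq:basic-consistency} together with \Cref{prop:no-direct-effect} (intervening on $V_j$ does not affect its non-descendant parents) yields $V_j(v_j) = V_j$ almost surely; and by the second fact combined with \Cref{prop:no-direct-effect} again, $V_{\mbg(j)}(v_j) = V_{\mbg(j)}$ almost surely. So $(V_j(v_j), V_{\mbg(j)}(v_j))$ and $(V_j, V_{\mbg(j)})$ have the same law, and the conditional factors above equal $\p(V_j = \tilde v_j \mid V_{\mbg(j)} = v_{\mbg(j)})$ and $\p(V_j = v_j \mid V_{\mbg(j)} = v_{\mbg(j)})$. Finally, the consistency property \eqref{eq:consistency} gives $V(v_j) = V$ on the event $\{V_j = v_j\}$, so the denominator of the claim equals $\p(V_j(v_j) = v_j, V_{-j}(v_j) = v_{-j})$.

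Dividing the two displayed factorizations, the common factor $\p(V_{-j}(v_j) = v_{-j})$ cancels and the asserted ratio emerges. The only genuinely delicate step is the district/descendant analysis of $\mbg_\gG(j)$ in the second graph fact: fixability is used precisely to forbid a descendant of $V_j$ from entering its Markov background, and this is exactly the hypothesis that makes the cancellation legitimate. Everything else is bookkeeping with the two consistency identities and the local Markov factorization.
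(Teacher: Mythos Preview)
Your proof is correct and close in spirit to the paper's, but the decomposition is genuinely different. The paper first isolates the graphical content of fixability as an equivalence: $V_j$ is fixable in $\gG$ if and only if $V_j(v_j)$ is m-separated from $V_{\de_\gG(j)}(v_j)$ given $V_{\nd_\gG(j)}(v_j)$ in $\gG(v_j)$ (a standalone lemma). It then factorizes $\p(V(v_j))$ through the partition $\{V_j\}\cup \de_\gG(j)\cup \nd_\gG(j)$, uses that m-separation to swap $\tilde v_j$ for $v_j$ in the descendant factor, and obtains the ratio identity with $V_{\nd_\gG(j)}$ in place of $V_{\mbg_\gG(j)}$; a final appeal to the global Markov property ($V_j \independent V_{\nd(j)\setminus\mbg(j)}\mid V_{\mbg(j)}$) reduces the conditioning set to $\mbg_\gG(j)$. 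You instead go straight to the Markov background: your two graph facts (invariance of $\mbg_\gG(j)$ under deleting outgoing edges of $V_j$, and $\mbg_\gG(j)\subseteq\nd_\gG(j)$ under fixability) let you apply the local Markov property in $\gG(v_j)$ at the childless vertex $V_j(v_j)$ and factor out $\p(V_j\mid V_{\mbg(j)})$ in one step, so the reduction from $\nd(j)$ to $\mbg(j)$ never appears as a separate move. Your route is slightly more economical; the paper's route has the advantage of packaging fixability as a clean if-and-only-if m-separation statement that can be reused elsewhere.
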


The proof of
\Cref{prop:consistency-po,prop:no-direct-effect,prop:causal-imply-ordinary-markov,prop:fixing}
can be found in the Appendix.

\subsection{Proof sketch of \Cref{thm:admg-ne-nm}}
\label{sec:proof-sketch-crefthm:-ne}

We conclude this Section with a proof sketch for \Cref{thm:admg-ne-nm} by
``lifting'' the statistical model $\statmodelE(\gG, \mathbb{V})$ to
the causal model $\causalmodel(\gG, \mathbb{V})$.
Consider any $\P_V \in \statmodelE
  (\gG, \mathbb{V})$. By
  \Cref{lem:npsem-causal}, there exists $\P \in \causalmodel(\gG,
  \mathbb{V})$ such that $\marg_V(\P) = \P_V$. Consider any fixable
  $V_j \in V$ in $\gG$.
  By rewriting the equation in \Cref{prop:fixing} and 
marginalizing out $\tilde{v}_j$, we find that the fixing operation
$\fix_{V_j = v_j}(\p_V)$ defined in \Cref{sec:nest-mark-prop}
identifies the distribution of $V_{-j}(v_j)$:
\[
\p(V_{-j}(v_j) = v_{-j}) = (\fix_{V_j = v_j}(\p_V))(v_{-j}).
\] 
By repeatedly applying this for any fixable sequence $J = V_{\sJ} \subset
V$ in $\gG$, we obtain
  \begin{equation}
  \label{eq:invariance-fixing}
  \fix_{V_{\sJ} = v_{\sJ}}(\P_V) = \marg_{V_{-\sJ}(v_{\sJ})}(\P).
  \end{equation}
  The (extended) conditional independence in the kernel
  $\fix_{V_{\sJ}}(\P_V)$ can then be established using consistency and
  Markov property of the potential outcomes
  (\Cref{prop:consistency-po,prop:causal-imply-ordinary-markov}). The
  details can be found in \Cref{sec:proof-crefthm:-ne}.

As a final remark, note that the equality in
\eqref{eq:invariance-fixing} immediately implies that the order of
fixing does not matter, that is, when fixing is applied sequentially
for two different fixable permutations of the same subset of
variables (to a distribution in $\statmodelE(\gG, \mathbb{V})$), the
results are the same. Indeed, this is also true for all distributions in
$\statmodelNM(\gG, \mathbb{V})$ \parencite[Theorem
31]{richardsonNestedMarkovProperties2023}. Intuitively, this is true
because the nested Markov model contains and only contains all the
equality constraints in all latent variable DAG models, or in other
words, the NM model is the ``Zariski closure'' of the CE model. This
nontrivial result is first established for discrete $\mathbb{V}$ by
\textcite{evansMarginsDiscreteBayesian2018}.




\section{Discussion}
\label{sec:discussion}

\subsection{ADMGs and causal inference}

We have mainly considered different statistical models (collection of
probability distributions of $V$) associated with ADMGs, but many such
models are closely related to causal models (collection of probability
distributions of $V$ and all its potential outcomes) as shown in
\Cref{sec:causal-model}. In fact, \Cref{sec:causal-model} proves the
highly non-trivial \Cref{thm:admg-ne-nm} about nonparametric equation
models by ``lifting'' them to causal models. This allows us to break
down the proof of $\text{NE} \Rightarrow \text{NM}$ into several
easy-to-understand steps. Intuitively, this strategy is possible
because a nonparametric equation model has at least one causal
explanation.

Of course, it is not new to use ADMGs for causal inference. After all,
\textcite{wright34_method_path_coeff} have used them nearly a
century ago because two types of edges are needed to describe two
different types of dependence (causal and statistical correlation) in
a linear structural equation model, and this tradition is kept in
social science; see
e.g.\ \textcite{bollenStructuralEquationsLatent1989} and the popular
LISREL software \parencite{joreskogLISREL10Windows2018}. Moreover,
ADMGs are used in the
groundbreaking do-calculus \parencite{pearlCausalDiagramsEmpirical1995,pearlCausality2009} and the
ID algorithm for causal identification
\parencite{tianTestableImplicationsCausal2002a,richardsonNestedMarkovProperties2023}.

But here we would like to make a different philosophical point: causal
inference can and should be \emph{entirely based on ADMGs}. More
specifically, we intend to criticize the following ``latent DAG
interpretation'' of ADMGs that is commonly found in written and verbal
communications about causal graphs:
\begin{quote}
ADMG is just a convenient shortcut to represent some unspecified large
causal DAG that generate the data.
\end{quote}
Putting this differently, we argue that the ADMG-based theory of
causality is \emph{a proper generalization rather than a derivative} of the
DAG-based theory. 

On face value, the ``latent DAG interpretation'' makes obscure
ontological assumptions about latent
causes. \Cref{thm:statmodelE-agnostic} further reveals the fundamental
difference between the ``latent DAG interpretation'' (corresponding to
the CE model in the non-causal case) and our preferred interpretation
via noise expansion (the NE model): the CE model uses DAGs as the base
model, while the NM model uses unconfounded ADMGs as the base
model. So they correspond to quite different philosophical stances: the
``latent DAG interpretation'' is essentially statistical
reductionism---every variable is a result of some earlier or
lower-level features and some statistical noice, while our ADMG causal
Markov model focuses on causal relationship between the variables in
the system and does not attempt to explain why exogenous
correlations.



In other words, unlike the ``latent DAG
interpretation'', our causal model does not commit to Reichenbach's
Common Cause Principle, which says if two variables are correlated and
neither is a causal of the other, then they must have a common cause
that renders them conditionally independent.
In consequence, users of our causal model will focus on the variables being
investigated and/or the
variables that can potentially be measured in their study. Moreover,
they do not need to justify why any bidirected edge is assumed in the
graph, because exactly why two variables are exogenously correlated is
not crucial for causal
identification (through the do-calculus or ID algorithm). Rather,
we argue that practitioners should focus
on defending the lack of bidirected or directed edges between some
variables, which is why causal identification is possible. By using
ADMGs and drawing bidirected edges, practitioners are instinctively
encouraged to think about the missing bidirected edges. For example,
this approach is taken in
\textcite{guoConfounderSelectionIterative2023} who develop a new
procedure for confounder selection by iteratively expanding possible
bidirected edges in the graph.

Despite what has been said, the E/NE model and the CE model are not
too different mathematically: it is not hard to show that they are
equivalent when the bidirected edges can be partitioned into multiple
cliques. So if the same causal ADMG is used, we do not expect a
massive difference between the E/NE and CE models. What we are really
arguing is that it is unhelpful and error-prone to think about
``\emph{the} causal DAG'' that generates the data. Instead, it is more modest and
productive to think about a nested sequence of ADMGs with more and
more variables that can explain the data, and acknowledge that there
is perhaps always some confounding relationships (as represented by
the bidirected edges) whose exact nature is unknown and not important
for the question under investigation.

Of course, when there are good reasons to believe two variables have
a common cause, practitioners are still encouraged to include the
common cause in the graph even if it cannot be measured. Latent mixture
models can still be used if they are deemed reasonable for the
specific problem, and alternative identification strategies such as
those using proxies of the unmeasured common causes remain useful
\parencite[see
e.g.][]{tchetgentchetgenIntroductionProximalCausal2024}.

\subsection{Future research}
\label{sec:open-problems}

There are some important open problems to consider in future
work. First, it would be interesting to understand the inequality
constraints implied by the E/NE model, in addition to the equality
constraints in the nested Markov model. Second, ADMGs
can also be used to describe quantum mechanics models, which are also
submodels of the nested Markov model
\parencite{navascuesInflationTechniqueCompletely2020}. A quick
investigation shows that the E/NE model does not contain nor is
contained by the quantum mechanics model: the E/NE model has a more
relaxed interpretation of bidirected graphs but a local interpretation
of directed edges. It would be interesting to study their relations
further and consider super-models that contain both of them. Third,
many modern causal inference methods use graphical diagrams to identify
the causal estimands of interest and then estimate those parameters
using influence-function based methods. These methods typically
require pathwise differentiability of the estimands within the model,
and it would be interesting to study that for the E/NE model defined
here.


\section*{Acknowledgement}
\label{sec:acknowledgement}

This work is in part supported by the Engineering and Physical
Sciences Research Council (grant number EP/V049968/1). The author
thanks Wenjie Hu for discussion on the causal Markov model for ADMGs,
Thomas Richardson for constructive feedback, and Robin Evans for
pointing out a mistake in an earlier draft.

\begin{refcontext}[sorting=nyt]
  \printbibliography
\end{refcontext}

\appendix

\section{Technical proofs}
\label{sec:technical-proofs}

\subsection{Proof of \Cref{thm:relation-statmodel}}
\label{sec:proof-crefthm:r-stat}

As mentioned previously, many implications and equivalences in
\Cref{fig:relation-statmodel} are already proved the literature. We
will identify the new claims and then prove them in a sequence of
Lemmas. 

Relations in \Cref{fig:relation-statmodel-admg}: it follows from the definition that $\text{PE}
\Rightarrow \text{CE}$, $\text{NM} \Rightarrow \text{GM}\Rightarrow
\text{UM}$, and $\text{E} \Rightarrow
\text{NE}$ (the last implication requires $\text{EF}=\text{GM}$ for
confounded graphs; see \Cref{lem:unconfounded-ef-gm} below). It is
shown in \textcite[Theorem
2]{richardsonMarkovPropertiesAcyclic2003} that $\text{LM}
\Leftrightarrow \text{GM} \Leftrightarrow \text{A}$ and essentially in
\textcite[Theorem 46]{richardsonNestedMarkovProperties2023} that
$\text{CE} \Rightarrow \text{NM}$. In \Cref{lem:admg-ce-ne} below, it
is shown that $\text{CE} \Rightarrow \text{NE}$. It
follows from
\Cref{lem:admg-ne-e} below that $\text{NE} \Rightarrow \text{E}$ and from
\Cref{thm:admg-ne-nm} in the main text that $\text{E} \Rightarrow
\text{NM}$.

Relations in \Cref{fig:relation-statmodel-unconfounded}: it follows
from
\Cref{lem:unconfounded-e-ef,lem:unconfounded-ef-gm
} below
that $\text{E} \Leftrightarrow \text{EF} \Leftrightarrow \text{GM}$.
The rest of the
relations follow from \Cref{fig:relation-statmodel-admg}.

Relations in \Cref{fig:relation-statmodel-dag}: it is shown in
\textcite[Theorem 3.27]{lauritzenGraphicalModels1996} that $\text{GM}
\Leftrightarrow \text{F}$ (although there is a gap in the proof of
\textcite[Proposition 3.25]{lauritzenGraphicalModels1996}; see the
remark after \textcite[Corollary
2]{richardsonMarkovPropertiesAcyclic2003}). By definition,
$\expandP(\gG) = \gG$ because a DAG has no bidirected edges (recall
that we do not consider bidirected loops). So, by definition, $\text{PE}
\Leftrightarrow \text{GM}$. The rest of the equivalences and
implications follow from \Cref{fig:relation-statmodel-unconfounded}
(because DAGs are unconfounded).

Relations in \Cref{fig:relation-statmodel-bdg}: it is shown in
\textcite[Theorem 3]{richardsonMarkovPropertiesAcyclic2003} that
$\text{GM} \Leftrightarrow \text{UM}$. The rest of the equivalences and
implications follow from \Cref{fig:relation-statmodel-unconfounded}
(because bidirected graphs are unconfounded).

It remains to show that the relations in \Cref{fig:relation-statmodel}
are ``tight'' in the sense that when the two models are not connected
by $\Leftrightarrow$ in \Cref{fig:relation-statmodel}, there exists
some graph in the corresponding class such that the models are not
equal. It suffices to consider the following cases:
\begin{enumerate}
\item When $\gG$ is a DAG, $\text{UM} \Rightarrow \text{GM}$ is
  not always true. For example, consider the graph $A \ldedge
  B \rdedge C$, for which the GM model contains the additional
  conditional independence $A \independent C \mid B$.
\item When $\gG$ is bidirected, $\text{GM} \Rightarrow \text{CE}$ and
  $\text{CE} \Rightarrow \text{PE}$ are not always true. This is
  closely related to Bell's inequalities in quantum mechanics; see
  \textcite{fritzBellTheoremCorrelation2012a} for some examples.
\item When $\gG$ is an ADMG, $\text{GM} \Rightarrow \text{NM}$ is
  generally not true. A well known example is the ``Verma
  constraint''
  \parencite{vermaEquivalenceSynthesisCausal1990,richardsonNestedMarkovProperties2023}.
\item When $\gG$ is an ADMG, $\text{NM} \Rightarrow \text{NE}$ is
  generally not true. This is because $\text{NM}$ only contains
  equality constraints and latent variable models such as the
  $\text{E}$ model may contain inequality constraints. A well known
  example is the Balke-Pearl bound for the instrumental variable
  graph \parencite{balkeBoundsTreatmentEffects1997}.
\end{enumerate}

\subsubsection{Proof of new claims}
\label{sec:new-claims}

\begin{lemma} \label{lem:unconfounded-e-ef}
  For $\gG \in \UnADMGc(V)$ and any product space $\mathbb{V}$, we have
  $\statmodelE(\gG, \mathbb{V}) = \statmodelEF(\gG,\mathbb{V})$.
\end{lemma}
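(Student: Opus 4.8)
The plan is to establish the two inclusions $\statmodelE(\gG,\mathbb{V}) \subseteq \statmodelEF(\gG,\mathbb{V})$ and $\statmodelEF(\gG,\mathbb{V}) \subseteq \statmodelE(\gG,\mathbb{V})$ separately, exploiting the defining feature of an unconfounded graph: every non-loop bidirected edge joins two exogenous vertices, so the only dependence among the noise variables $E_1,\dots,E_d$ is among those attached to the exogenous set $E$, and each noise attached to an endogenous vertex is independent of all the rest. Throughout I would use that on a bidirected graph the unconditional and global Markov properties coincide (Theorem 3 of \textcite{richardsonMarkovPropertiesAcyclic2003}), which lets me pass freely between the noise condition \eqref{eq:independent-error} and the requirement $\marg_E(\P)\in\statmodelGM(\marg_E(\gG),\cdot)$ appearing in the EF model; here I also use that $E$ is ancestral, so $\marg_E(\gG)$ is the induced bidirected subgraph on $E$.

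For $\statmodelE \subseteq \statmodelEF$ I would start from a representation $V_j=f_j(V_{\pa(j)},E_j)$ with noise law $\Q$ obeying \eqref{eq:independent-error}. Taking $\sJ=\{j\}$ and $\sK=[d]\setminus\{j\}$ for an endogenous $V_j$ (which carries no non-loop bidirected edge) gives $E_j\independent E_{-j}$, so every endogenous noise is independent of all others. I would then fix a topological order listing the vertices of $E$ first (legitimate since exogenous vertices have no parents) and apply the chain rule. For an endogenous $V_j$, its set of predecessors $\mathrm{pre}(V_j)$ is ancestrally closed, hence $V_{\mathrm{pre}(V_j)}$ is a function of $\{E_k:V_k\in\mathrm{pre}(V_j)\}$; since $E_j$ is independent of that block, $E_j\independent V_{\mathrm{pre}(V_j)}$ and therefore $\p(v_j\mid v_{\mathrm{pre}(V_j)})=\p(v_j\mid v_{\pa(j)})$. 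Multiplying the chain-rule factors yields $\p(v)=\p(v_E)\prod_{V_j\in V\setminus E}\p(v_j\mid v_{\pa(j)})$ wherever the conditionals are defined. Finally each exogenous $V_j=f_j(E_j)$ is a coordinatewise function of its own noise, so $V_E$ inherits the independence structure of $(E_j)_{V_j\in E}$ and is unconditionally --- hence globally --- Markov w.r.t.\ $\marg_E(\gG)$; both conditions of the EF model hold.

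For $\statmodelEF \subseteq \statmodelE$ I would, conversely, build noise variables and equations realizing a given EF distribution $\P$. For the exogenous block I would choose Borel isomorphisms $\phi_j:\mathbb{V}_j\to[0,1]$ onto Borel subsets (available because each $\mathbb{V}_j\subseteq\mathbb{R}^{n_j}$ is standard Borel), draw $(V_j')_{V_j\in E}\sim\marg_E(\P)$, and set $E_j:=\phi_j(V_j')$ with $f_j:=\phi_j^{-1}$, so that $V_j=f_j(E_j)=V_j'$ and $(E_j)_{V_j\in E}$ has exactly the same Markov structure as $\marg_E(\P)$. For each endogenous $V_j$ I would invoke noise outsourcing (the conditional distributional transform) to obtain a $U[0,1]$ variable $E_j$ and a measurable $f_j$ such that the recursively generated $V_j=f_j(V_{\pa(j)},E_j)$ has conditional law $\p(\cdot\mid v_{\pa(j)})$, taking all endogenous noises mutually independent and independent of the exogenous block. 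The assembled $E\in[0,1]^d$ then satisfies \eqref{eq:independent-error} (its only couplings lie inside $E$ and match $\marg_E(\gG)$), and ancestral sampling along the chosen order reproduces the density $\p(v_E)\prod_{V_j\in V\setminus E}\p(v_j\mid v_{\pa(j)})=\p(v)$, so the solution has law $\P$.

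The main obstacle is measure-theoretic bookkeeping rather than conceptual: I must verify that the constructed $[0,1]$-valued noise vector satisfies the \emph{full} unconditional Markov property \eqref{eq:independent-error} rather than merely pairwise independences, and that the Borel-isomorphism and noise-outsourcing constructions can be carried out on one probability space so that all $d$ equations hold simultaneously while the endogenous noises stay jointly independent of the exogenous block. Both points reduce to standard tools --- the Borel isomorphism theorem and the noise-outsourcing lemma --- but need to be combined with care.
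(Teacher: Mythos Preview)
Your proposal is correct and follows essentially the same route as the paper: the paper also dispatches $\statmodelE\subseteq\statmodelEF$ quickly (calling it immediate from the definitions) and builds the reverse inclusion via the conditional quantile transform for endogenous vertices while letting the exogenous variables act as their own noises. Your version is more scrupulous about forcing the noises into $[0,1]$ via Borel isomorphisms and about handling possibly non-continuous conditionals via noise outsourcing, but the architecture is the same.
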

\begin{proof}
  Let $E \subseteq V$ denote a set of exogenous vertices in $\gG$.
  It follows from the definition that $\statmodelE(\gG, \mathbb{V})
  \subseteq \statmodelEF(\gG, \mathbb{V})$. For the reverse, consider
  $\P \in \statmodelEF(\gG, \mathbb{V})$, so
  \[
    \p(V = v) = \p(E = e) \prod_{V_j \not\in E} \p(V_j = v_j \mid
    V_{\pa(j)} = v_{\pa(j)}),
  \]
  where $\p$ is the density function of $\P$ and $\pa(j)$ is the
  parent set of $V_j$ in $\gG$. For any $j = 1,\dots,d$, define
  \[
    E_j' =
    \begin{cases}
      \P(V_j \mid V_{\pa(j)}), &\text{if}~V_j \not \in E,\\
      E_j,&\text{if}~V_j \in E,
    \end{cases}
  \]
  where $\P(v_j \mid v_{\pa(j))})$ is the conditional
  cumulative distribution function of $V_j$ at $v_j$ given $V_{\pa(j)}
  = v_{\pa(j)}$. Thus
  \[
    V_j =
    \begin{cases}
      \Q_j(E_j' \mid V_{\pa(j)}),&\text{if}~V_j \not \in E,\\
      E_j',&\text{if}~V_j \in E,
    \end{cases}
  \]
  where $\Q_j(\cdot \mid v_{\pa(j)})$ is the conditional quantile
  function of $V_j$ given $V_{\pa(j)} = v_{\pa(j)}$. Thus, $V$
  satisfies a system of equations with respect to $\gG$. Using the
  equivalence of GM and UM for bidirected graphs, it is easy to
  verify that the distribution of the noise variables $E'$ in the
  system is global Markov with respect to the bidirected component of
  $\gG$ because it factorizes as
  \[
    \p(E = e) \prod_{V_j \not \in E} \p(E_j' = e_j').
  \]
  This shows that $\P \in \statmodelE(\gG, \mathbb{V})$ and hence
  $\statmodelEF(\gG, \mathbb{V}) \subseteq \statmodelE(\gG,
  \mathbb{V})$.
\end{proof}

\begin{lemma} \label{lem:unconfounded-ef-gm}
  For $\gG \in \UnADMGc(V)$ and any product space $\mathbb{V}$, we have
  $\statmodelEF(\gG,\mathbb{V}) = \statmodelGM(\gG,\mathbb{V})$.
\end{lemma}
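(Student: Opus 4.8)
The plan is to show that both $\statmodelEF(\gG,\mathbb{V})$ and $\statmodelGM(\gG,\mathbb{V})$ coincide with the ordered local Markov model $\statmodelLM(\gG,\mathbb{V})$, which is equal to $\statmodelGM(\gG,\mathbb{V})$ for every ADMG by the equivalence of the ordered local and global Markov properties \parencite[Theorem 2]{richardsonMarkovPropertiesAcyclic2003}. Write $E$ for the exogenous vertices, $N=V\setminus E$, and $\gG_E$ for the induced subgraph on $E$. Since $E$ is ancestral ($\an(E)=E$), $\gG_E=\marg_E(\gG)$ is a purely bidirected graph. The engine of the proof is two structural identities for the Markov background in an unconfounded graph: $\mbg_{\gG}(V_j)=\pa_{\gG}(V_j)$ whenever $V_j\in N$, and $\mbg_{\gG}(V_j)=\mbg_{\gG_E}(V_j)$ whenever $V_j\in E$. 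Both follow by inspecting a collider-connected walk ending with an arrowhead into $V_j$: for endogenous $V_j$ the last edge must be $\rdedge V_j$, and the tail at its source forces the penultimate vertex (a parent) to be a non-collider, so the walk has length one; for exogenous $V_j$ the last edge must be $\bdedge V_j$, and unconfoundedness forces every preceding vertex reached across a collider again to carry a bidirected edge, keeping the whole walk inside $E$.

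Next I would fix a topological order that lists all of $E$ before $N$. These identities split the defining constraints of $\statmodelLM(\gG)$ into two families. For $V_j\in E$ one has $\text{pre}_{\prec}(V_j)\subseteq E$, so the admissible ancestral sets $K$ lie in $E$ and $\mb_{\gG}(V_j,K)=\mbg_{\gG_E}(V_j)\cap K$; the corresponding conditional independences involve only $E$ and coincide exactly with the ordered local Markov constraints of the bidirected graph $\gG_E$, which (global and local Markov agreeing for bidirected graphs, again by \parencite{richardsonMarkovPropertiesAcyclic2003}) are equivalent to $\marg_E(\P)\in\statmodelGM(\gG_E,\marg_E(\mathbb{V}))$. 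For $V_j\in N$ and every admissible ancestral $K$ one has $\mb_{\gG}(V_j,K)=\pa_{\gG}(V_j)$, so the constraints reduce to the DAG-type local independences $V_j\independent K\setminus\pa_{\gG}(V_j)\setminus V_j\mid\pa_{\gG}(V_j)$.

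It then remains to identify this second family with the factorization clause of the EF model, which I would do by peeling sinks. For the inclusion $\statmodelGM\subseteq\statmodelEF$, induct on $|N|$: the base case $N=\emptyset$ is $\gG=\gG_E$ and is immediate. When $N\neq\emptyset$, the last vertex $V_s$ is an endogenous sink with only arrowheads into it, so every walk leaving $V_s$ is blocked at a parent, giving the m-separation behind $\p(v_s\mid v_{-s})=\p(v_s\mid v_{\pa_{\gG}(s)})$; since $V\setminus\{V_s\}$ is ancestral and induces an unconfounded graph, a standard soundness property of m-separation transfers the global Markov property to the margin, and induction yields $\p(v)=\p(e)\prod_{V_j\in N}\p(v_j\mid v_{\pa_{\gG}(j)})$, with $\marg_E(\P)\in\statmodelGM(\gG_E)$ coming from the first family. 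For the reverse inclusion $\statmodelEF\subseteq\statmodelGM$, I would check that EF is preserved under marginalization to an ancestral set $K$ (the $E$-block marginal stays bidirected-Markov on $\gG_E[E\cap K]$ and the directed factors telescope), after which $V_j$ is a sink of $\gG[K]$ so its conditional peels off $\p(v_K)$ and delivers precisely the required local independence; the $E$-family follows directly from the EF hypothesis on $\marg_E(\P)$.

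The hard part will be the bookkeeping in the matching step rather than any single deep estimate: verifying that EF restricts to EF on ancestral margins (which in turn needs the analogous marginalization stability of the bidirected global Markov property on $\gG_E$) and that integrating out the non-ancestral vertices removes exactly the corresponding directed conditionals. The two Markov-background identities are the delicate-but-short core; once they are in hand, the argument is essentially the classical DAG factorization theorem carried out on top of an exogenous layer that is handled as a single bidirected block, together with the already-known equivalence $\statmodelLM=\statmodelGM$.
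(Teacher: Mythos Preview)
Your proposal is correct. The two Markov-background identities you isolate are exactly the structural facts that drive the argument, and your verification of them is sound.

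For $\statmodelGM\subseteq\statmodelEF$ your route and the paper's coincide: both pass through $\statmodelGM=\statmodelLM$ and use that the Markov boundary of any endogenous vertex in an admissible ancestral set equals its parent set, which peels off the directed conditionals one sink at a time and leaves the bidirected-Markov clause on $E$.

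For $\statmodelEF\subseteq\statmodelGM$ the routes genuinely differ. The paper goes through the augmentation criterion: it argues that an EF distribution factorizes with respect to the undirected graph $\augg(\gG)$, invokes (the easy direction of) Hammersley--Clifford to land in $\statmodelGM(\augg(\gG))$, checks that EF is stable under ancestral margins (the same bookkeeping you flag), and concludes via $\statmodelA=\statmodelGM$. You instead verify the ordered local Markov property directly: ancestral-margin stability of EF reduces each required independence to either a sink-peeling step (endogenous $V_j$) or a constraint already supplied by $\marg_E(\P)\in\statmodelGM(\gG_E)$ (exogenous $V_j$). Your route is more self-contained and avoids the undirected-factorization detour; the paper's route reuses the established $\text{A}\Leftrightarrow\text{GM}$ equivalence as a black box. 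Both ultimately rest on the same ancestral-margin stability of EF, so neither is deeper than the other.
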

\begin{proof}

  By considering a topological order $\prec$ for $\gG$ with the
  exogenous vertices being the smallest, it is straightforward to show
  that the ordered local Markov property implies the exogenous
  factorization property (because the Markov background of any
  endogenous vertex is its parents). Hence $\statmodelGM(\gG, \mathbb{V}) =
  \statmodelLM(\gG, \mathbb{V}) \subseteq \statmodelEF(\gG,
  \mathbb{V})$.

  We next prove the reverse direction by using the augmentation
  criterion. Let $E \subseteq V$ denote a set of
  exogenous vertices in $\gG$; suppose $E = V_{\mathcal{E}}$ where
  $\mathcal{E} \subseteq [d]$. It is
  easy to see that if $\P \in \statmodelEF(\gG,
  \mathbb{V})$, then $\P$ factorizes according to $\augg(\gG)$ (the
  factorization property with respect to the augmentation graph, which
  is undirected, means that the density function $\p$ can be written
  as a product of terms that depend on the undirected cliques of the
  graph). By the Hammersley-Clifford theorem \parencite[p.\
  36]{lauritzenGraphicalModels1996}, we have $\P \in
  \statmodelGM(\augg(\gG), \mathbb{V})$. Now consider any $\sJ
  \subseteq [d]$ such that $J = V_{\sJ}$ is ancestral. For $\P \in
  \statmodelEF(\gG, \mathbb{V})$, the joint density function can be
  factorized as
  \[
    \p(v) = \p(v_{\mathcal{E} \cap \mathcal{J}}) \p(v_{\mathcal{E}
      \setminus \mathcal{J}} \mid v_{\mathcal{E} \cap \mathcal{J}})
    \prod_{j \in \mathcal{J} \setminus \mathcal{E}} \p(v_j \mid
    v_{\pa(j)}) \prod_{j \not \in \mathcal{J} \cup \mathcal{E}} \p(v_j
    \mid v_{\pa(j)}).
  \]
  By noting that all variables in the third term must belong to the
  ancestral set $V_{\sJ}$, it is easy to see that
  \[
    \p(v_{\sJ}) = \p(v_{\mathcal{E} \cap \mathcal{J}})
    \prod_{j \in \mathcal{J} \setminus \mathcal{E}} \p(v_j \mid
    v_{\pa(j)}).
  \]
  Recall that the ancestral margin of an ADMG is simply its
  corresponding subgraph. This shows that $\marg_J(\P) \in
  \statmodelEF(\marg_J(\gG), \marg_J(\mathbb{V}))$, and by the same
  argument above,
  \[
    \marg_J(\P) \in
    \statmodelGM(\augg\circ\marg_J(\gG), \marg_J(\mathbb{V})).
  \]
  Therefore, $\P \in \statmodelA(\gG, \mathbb{V})$ and hence
  $\statmodelEF(\gG, \mathbf{V}) \subseteq \statmodelA(\gG,
  \mathbb{V}) = \statmodelGM(\gG, \mathbb{V})$.
\end{proof}

\begin{lemma} \label{lem:admg-ne-e}
  For $\gG \in \ADMGc(V)$ and any product space $\mathbb{V}$, we have
  $\statmodelNE(\gG, \mathbb{V}) \subseteq
  \statmodelE(\gG,\mathbb{V})$.
\end{lemma}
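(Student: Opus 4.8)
The plan is to transfer a structural representation from the noise-expanded graph $\expandN(\gG)$ back down to $\gG$. I would start with an arbitrary $\P_V \in \statmodelNE(\gG, \mathbb{V})$; by definition there is a distribution $\tilde{\P}$ on $(V, E) \in \mathbb{V} \times [0,1]^{|V|}$ with $\tilde{\P} \in \statmodelGM(\expandN(\gG), \mathbb{V} \times [0,1]^{|V|})$ and $\marg_V(\tilde{\P}) = \P_V$. The crucial observation is that $\expandN(\gG)$ is unconfounded, so I may invoke the equivalence $\textnormal{GM} = \textnormal{E}$ for unconfounded graphs established in \Cref{lem:unconfounded-e-ef,lem:unconfounded-ef-gm} and conclude $\tilde{\P} \in \statmodelE(\expandN(\gG), \mathbb{V} \times [0,1]^{|V|})$. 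Since the exogenous vertices of $\expandN(\gG)$ are exactly $E_1, \dots, E_d$ and each $V_j$ is endogenous with parents $\{E_j\} \cup \pa_{\gG}(V_j)$, this yields a system of equations $E_j = a_j(N_j)$ and $V_j = b_j(E_j, V_{\pa_{\gG}(j)}, M_j)$ for $j = 1, \dots, d$, where the noise vector $(N, M)$ is unconditionally Markov with respect to the bidirected component of $\expandN(\gG)$.

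Next I would read off the noise structure from that bidirected component. Each endogenous $V_j$ carries only a bidirected self-loop and is bidirected-disconnected from every other vertex of $\expandN(\gG)$, so the unconditional Markov property forces $M_j$ to be independent of all remaining noises; hence $M_1, \dots, M_d$ are mutually independent and jointly independent of $N$. The bidirected edges among the $E_j$ mirror those of $\gG$ exactly, so $N = (N_1, \dots, N_d)$ is unconditionally Markov with respect to the bidirected component of $\gG$ under the identification $E_j \leftrightarrow V_j$. Substituting $E_j = a_j(N_j)$ into the equation for $V_j$ expresses each $V_j$ as a function of $V_{\pa_{\gG}(j)}$ and the single pair $(N_j, M_j)$.

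To land exactly in \Cref{def:np-system}, whose noise variables must be $[0,1]$-valued, I would fold each pair $(N_j, M_j) \in [0,1]^2$ into a scalar $E_j' := \psi(N_j, M_j)$ using a Borel isomorphism $\psi: [0,1]^2 \to [0,1]$ (which exists as both are standard Borel spaces of the same cardinality) and define the $f_j$ accordingly via $\psi^{-1}$. Because the random vector $V$ is never altered, its law remains $\marg_V(\tilde{\P}) = \P_V$, and the equations $V_j = f_j(V_{\pa_{\gG}(j)}, E_j')$ hold almost surely and simultaneously, so $\P_V$ will lie in $\statmodelE(\gG, \mathbb{V})$ once the noise condition is checked.

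The main obstacle, and the only genuinely non-mechanical step, is verifying that the folded noise $E' = (E_1', \dots, E_d')$ is again unconditionally Markov with respect to the bidirected component of $\gG$, i.e.\ that $V_{\sJ} \nobdedge V_{\sK} \ingraph{\gG}$ implies $E_{\sJ}' \independent E_{\sK}'$. This reduces to showing $(N_{\sJ}, M_{\sJ}) \independent (N_{\sK}, M_{\sK})$ whenever $\sJ$ and $\sK$ are bidirected-disconnected in $\gG$, which I would obtain by factoring the joint density: the factor $N_{\sJ} \independent N_{\sK}$ comes from the unconditional Markov property of $N$, while $M \independent N$ together with the mutual independence of the $M_j$ lets the two blocks separate. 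Care is needed here because the E-model condition concerns bidirected walks (districts) rather than single edges, so the argument must track connectivity in the bidirected component and not mere adjacency.
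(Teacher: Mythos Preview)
Your proposal is correct and follows essentially the same approach as the paper: lift to the unconfounded graph $\expandN(\gG)$, use the equivalence of GM and E there, extract structural equations whose noise splits into a bidirected-Markov part and an independent part, and fold the two parts via a bijection $[0,1]^2 \to [0,1]$. The only cosmetic difference is that the paper invokes \Cref{lem:unconfounded-ef-gm} to obtain the exogenous factorization and then rebuilds the conditional-quantile construction by hand (producing fresh independent uniforms $E_j'$ and combining them with the original $E_j$), whereas you invoke the full chain \Cref{lem:unconfounded-e-ef,lem:unconfounded-ef-gm} to get the E-representation abstractly (your $N_j$ playing the role of the paper's $E_j$ and your $M_j$ the role of its $E_j'$); your packaging is slightly more modular but the underlying argument is identical.
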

\begin{proof}
  We first consider the case that all random variables are real-valued
  (so $\mathbb{V}_j \subseteq \mathbb{R}$) and any distribution $\P
  \in \statmodelNE(\gG, \mathbb{V})$ on $V$. By definition, there
  exists a distribution $\P'$ on $(V, E)$
  such that $\P' \in \statmodelGM\big(\gG', \mathbb{V} \times
  [0,1]^{|V|}\big)$ for $\gG' = \expandN(\gG)$ and $\marg_V(\P') =
  \P$. Because $\gG'$ is unconfounded, $\P'$ must satisfy the exogenous
  factorization property (\Cref{lem:unconfounded-ef-gm}):
  \[
    \p'(V = v \mid E = e) = \prod_{j=1}^d \p'(V_j = v_j \mid V_{\pa(j)}
    = v_{\pa(j)}, E_j = e_j),
  \]
  where $\pa(j) = \pa_{\gG}(j)$ contains indices for the parents of
  $V_j$ in $\gG$ and the marginal distribution of $E$ is global Markov
  with respect to the bidirected component of $\gG$. Let $\P'(v_j \mid
  v_{\pa(j)}, e_j)$ denote the conditional cumulative
  distribution function of $V_j$ given $V_{\pa(j)} =
  v_{\pa(j)}$ and $E_j = e_j$, and let $\Q'(\cdot \mid v_{\pa(j)},
  e_j)$ denote the associated conditional quantile function. Let $E'_1,
  \dots, E'_{d}$ be independent uniform random variables over
  $[0,1]$ and let $V' = (V'_1,\dots,V'_{d})$ be defined recursively by
  \[
    V'_j = \Q'(E'_j \mid V'_{\pa(j)}, E_j),~j=1,\dots,d.
  \]
  Using the Galois connections for the distribution and quantile
  functions (i.e.\ $\Q(e) \leq v$ if and only if $e \leq \P(v)$ for
  any pair of distribution and quantile functions $(\P,\Q)$ and $e \in
  [0,1]$, $v \in \mathbb{R}$), it is easy to show that $V'$ has the
  same distribution $\P$ as $V$. Let $h: [0,1] \times [0,1] \to [0,1]$
  be any (measurable) bijection.\footnote{One simple construction is
    to alternate between
    the digits in the binary expansion of the two arguments.}
  It is obvious that
  $V'_j$ is a function of $V'_{\pa(j)}$ and $h(E_j,E_j')$, and the
  distribution of $h(E_1,E_1'), \dots, h(E_d,
  E_d')$ is global Markov with respect to the bidirected
  component of $\gG$. Thus $\P \in
  \mathbb{P}_{\textnormal{E}}(\gG, \mathbb{V})$.

  For general $\mathbb{V}_1,\dots,\mathbb{V}_{d}$, the
  above argument can be easily extended by introducing an order on
  the entries of $V_j \in V$ (if $V_j$ is indeed multivariate) and
  applying the conditional quantile transform recursively according to
  that order.
\end{proof}

\begin{lemma}
  \label{lem:admg-ce-ne}
  For $\gG \in \ADMGc(V)$ and any product space $\mathbb{V}$, we have
  $\statmodelCE(\gG, \mathbb{V}) \subseteq
  \statmodelNE(\gG,\mathbb{V})$.
\end{lemma}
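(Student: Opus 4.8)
The plan is to take a distribution in the clique-expansion model, extract its data-generating process on the clique-expanded DAG, and then re-encode the several latent clique variables feeding into each vertex as a single $[0,1]$-valued noise, thereby exhibiting the same observed law as a member of the noise-expansion model. First I would fix $\P \in \statmodelCE(\gG, \mathbb{V})$, so by definition there is a distribution $\P'$ on $(V, (E_{\sJ})_{\sJ \in \mathcal{C}(\gG)})$ that is global Markov with respect to the DAG $\expandC(\gG)$ and satisfies $\marg_V(\P') = \P$. Since $\expandC(\gG)$ is a DAG, global Markov equals factorization there, and the latent atoms $E_{\sJ}$ are source vertices; hence without loss of generality the $E_{\sJ}$ are mutually independent and the density factorizes as $\p'(v, (e_{\sJ})) = \prod_{\sJ} \p'(e_{\sJ}) \cdot \prod_{j} \p'\big(v_j \mid v_{\pa_{\gG}(j)}, (e_{\sJ})_{\sJ \ni j}\big)$, because the parents of $V_j$ in $\expandC(\gG)$ are exactly $V_{\pa_{\gG}(j)}$ together with the clique atoms $\{E_{\sJ} : j \in \sJ\}$.

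Next I would build a candidate member of the noise-expansion model. For each $j$ let $m_j$ be the number of cliques containing $j$ and fix a measurable bijection $[0,1]^{m_j} \to [0,1]$ (as in the proof of \Cref{lem:admg-ne-e}); define the single noise $E_j \in [0,1]$ to be the encoding of the vector $(E_{\sJ})_{\sJ \ni j}$. Crucially, different bundles share atoms: whenever $j,k \in \sJ$, the coordinate $E_{\sJ}$ is common to $E_j$ and $E_k$, so the joint law of $(E_j)_j$ is the pushforward of the independent atoms and is \emph{not} a product. I would then define $\P''$ on $(V, (E_j)_j)$ through this noise law together with the kernels $\p''(v_j \mid v_{\pa_{\gG}(j)}, e_j) := \p'(v_j \mid v_{\pa_{\gG}(j)}, \mathrm{decode}(e_j))$. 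Marginalizing $\P''$ over the noise reconstructs exactly the CE generating process (the shared atoms are consistent across all $j \in \sJ$), so $\marg_V(\P'') = \P$, and by construction $\p''$ has precisely the exogenous factorization shape required for $\expandN(\gG)$, whose exogenous vertices are the $E_j$.

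The crux is to verify that the noise marginal $\marg_{E}(\P'')$ is global Markov with respect to the bidirected component of $\gG$ (the bidirected graph on $(E_j)_j$ with $E_j \bdedge E_k$ iff $V_j \bdedge V_k$). For this I would introduce the auxiliary DAG $D$ on vertices $\{E_{\sJ}\}_{\sJ} \cup \{E_j\}_j$ with edges $E_{\sJ} \rdedge E_j$ for all $j \in \sJ$, in which each $E_j$ is the deterministic encoding of its parents; then $\marg_{E}(\P'')$ is the $\{E_j\}$-marginal of a (degenerate) DAG law on $D$. A direct computation of the latent projection shows $\marg_{\{E_j\}}(D)$ has no directed edges among the $E_j$ (they are sinks in $D$) and a bidirected edge $E_j \bdedge E_k$ exactly when $j,k$ share a common parent atom, i.e.\ when some clique contains both, i.e.\ when $V_j \bdedge V_k$; thus $\marg_{\{E_j\}}(D)$ is precisely the bidirected component of $\gG$. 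By the standard soundness of latent projection, namely $\marg_{W}\big(\statmodelGM(\gG'')\big) \subseteq \statmodelGM(\marg_{W}(\gG''))$, the noise marginal is global Markov with respect to that bidirected component. Combining this with the factorization of $\P''$ shows $\P'' \in \statmodelEF(\expandN(\gG), \cdot) = \statmodelGM(\expandN(\gG), \cdot)$ by \Cref{lem:unconfounded-ef-gm} (recall $\expandN(\gG)$ is unconfounded), whence $\P = \marg_V(\P'') \in \statmodelNE(\gG, \mathbb{V})$.

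The main obstacle I anticipate is exactly this global-Markov verification of the bundled noise: the bundles are dependent in a structured way, and it is tempting but insufficient to check only pairwise independences. The auxiliary-DAG device is what makes it clean, reducing the claim to the routine soundness of latent projection rather than a hands-on m-separation argument; the only other care needed is the measurable-bijection encoding of $[0,1]^{m_j}$ into $[0,1]$ and the extension to multivariate $\mathbb{V}_j$, both handled as in \Cref{lem:admg-ne-e}.
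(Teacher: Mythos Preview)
Your proposal is correct and follows essentially the same approach as the paper: both take $\P'$ on the clique-expansion DAG, use factorization to identify the latent parents of $V_j$ as the bundle $(E_{\sJ})_{\sJ \ni j}$, encode each bundle into a single $[0,1]$-valued noise via a measurable bijection, and then argue that the resulting noise vector is global Markov with respect to the bidirected component of $\gG$, yielding exogenous factorization for $\expandN(\gG)$. The only notable difference is that the paper dismisses the Markov verification of the bundled noise with ``it is easy to see,'' whereas you supply a clean explicit argument via the auxiliary DAG $D$ and soundness of latent projection; this is a nice addition rather than a departure.
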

\begin{proof}
  Consider $\P \in \statmodelCE(\gG, \mathbb{V})$ and let $\gG' =
  \expandC(\gG)$. By definition,
  there exists a distribution $\P' \in \statmodelGM(\gG', \mathbb{V} \times
  [0,1]^{|\mathcal{C}(\gG)|})$ on $V$ and $E_{\sJ}, \sJ \in
  \mathcal{C}(\gG)$ such that $\P = \marg_V(\P')$. 
  Because $\gG'$ is unconfounded, $\P'$ must satisfy the exogenous
  factorization property (\Cref{lem:unconfounded-ef-gm}):
  \[
    \p'(V = v \mid E = e) = \prod_{j=1}^d \p'(V_j = v_j \mid V_{\pa(j)}
    = v_{\pa(j)}, \tilde{E}_j = \tilde{e}_j),
  \]
  where $\pa(j)$ is the parent set of $V_j$ in $\gG$ and $\tilde{E}_j
  = (E_{\sJ}: \sJ \in \mathcal{C}(\gG), j \in \sJ)$ collects latent
  variables in $\gG'$ with a directed edge to $V_j$. It is easy to see
  that the distribution of $(\tilde{E}_1,\dots,\tilde{E}_d)$ is global
  Markov with respect to the bidirected component of $\gG$. Let $h_j$
  be a (measurable) bijection that maps $[0,1]^{|\tilde{E}_j|}$ to
  $[0,1]$. Thus, the distribution of $(V_1,\dots,V_d,h_1(\tilde{E}_1),
  \dots, h_d(\tilde{E}_d))$ satisfies the exogenous factorization
  property (and thus the global Markov property by
  \Cref{lem:unconfounded-ef-gm}) with respect to the noise expansion
  graph $\expandN(\gG)$. This shows that $\P \in \statmodelNE(\gG,
  \mathbb{V})$.
\end{proof}

\subsection{Proof of \Cref{thm:statmodelE-agnostic}}
\label{sec:proof-crefthm:st-agn}

For $\gG \in \ADMGc(V)$, let the collection of all canonical ADMG
expansions of $\gG$ be denotes as
\[
  \expand(\gG) =  \bigcup_{V' \supset
    V} \{\gG' \in \ADMGc(V'): \marg_V(\gG') = \gG\},
\]
all unconfounded expansions of $\gG$ be denoted as
\[
  \expand_{\textnormal{U}}(\gG) =  \bigcup_{V' \supset
    V} \{\gG' \in \UnADMGc(V'): \marg_V(\gG') = \gG\},
\]
and all DAG expansions of $\gG$ be denoted as
\[
  \expand_{\textnormal{D}}(\gG) =  \bigcup_{V' \supset
    V} \{\gG' \in \DAG(V'): \marg_V(\gG') = \gG\},
\]
Taking $\mathcal{G}_0(V) = \UnADMGc(V)$ for all vertex set $V$,
equation \eqref{eq:agnosticism} can be rewritten as
\[
  \statmodel(\gG) = \bigcup_{\gG' \in \expand_{\textnormal{U}}(\gG)}
  \marg_V \left( \statmodel(\gG', \mathbb{V} \times [0,1]^{|V(\gG')| -
    |V|}) \right),
\]
where $V(\gG')$ is the vertex set of $\gG'$.

\subsubsection{E/NE is complete (for unconfounded expansions)}
\label{sec:noise-expans-agnost}

We first show that the E model (which is equivalent to NE by
\Cref{fig:relation-statmodel-admg}) is complete by proving a stronger
result.

\begin{proposition} \label{prop:admg-model}
  For any $\gG \in \ADMGc(V)$, $|V| = d$, and product space
  $\mathbb{V} = \mathbb{V}_1 \times \dots \times \mathbb{V}_d$, we
  have
  \begin{equation*}
    \begin{split}
      \statmodelE(\gG, \mathbb{V}) &= \marg_V\left(\statmodelE\left(\expandN(\gG),
                                     \mathbb{V} \times
                                     [0,1]^{|V|}\right)\right) \\
                                   &= \bigcup_{\gG' \in \expand_{\textnormal{U}}(\gG)}
                                     \marg_V\left(\statmodelE\left(\gG', \mathbb{V} \times [0,1]^{|V(\gG')| -
                                     |V|}\right)\right) \\
                                   &= \bigcup_{\gG' \in \expand(\gG)}
                                     \marg_V\left(\statmodelE\left(\gG', \mathbb{V} \times [0,1]^{|V(\gG')| -
                                     |V|}\right)\right).
    \end{split}
  \end{equation*}
\end{proposition}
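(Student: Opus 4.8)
The plan is to establish the three displayed equalities as a single cycle of set inclusions,
\begin{align*}
\statmodelE(\gG, \mathbb{V})
&\subseteq \marg_V\big(\statmodelE(\expandN(\gG),\, \mathbb{V}\times[0,1]^{|V|})\big) \\
&\subseteq \bigcup_{\gG'\in\expand_{\textnormal{U}}(\gG)} \marg_V\big(\statmodelE(\gG',\,\cdot)\big) \\
&\subseteq \bigcup_{\gG'\in\expand(\gG)} \marg_V\big(\statmodelE(\gG',\,\cdot)\big)
\subseteq \statmodelE(\gG, \mathbb{V}).
\end{align*}
The middle two inclusions are bookkeeping: the noise expansion $\expandN(\gG)$ is unconfounded by construction and marginalizes to $\gG$ with exactly $|V|$ added latent vertices living in $[0,1]$, so it is itself an element of $\expand_{\textnormal{U}}(\gG)$ and the first model is one of the terms of the second union; and since every unconfounded expansion is in particular an expansion, $\expand_{\textnormal{U}}(\gG)\subseteq\expand(\gG)$, so the second union is over a subset of the index set of the third. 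The two substantive inclusions are therefore the outer two.

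For the first inclusion I would take $\P \in \statmodelE(\gG,\mathbb{V})$ with $V_j = f_j(V_{\pa_{\gG}(j)}, E_j)$ and noise $E$ unconditionally Markov with respect to the bidirected component of $\gG$, and exhibit a distribution on $\expandN(\gG)$ whose $V$-marginal is $\P$ by letting the latent vertex of $\expandN(\gG)$ attached to $V_j$ take the value of the noise variable $E_j$. Concretely the latent vertex's equation is the identity on its own noise, the equation for $V_j$ is $f_j$ applied to $V_{\pa_{\gG}(j)}$ and that latent vertex (ignoring a dummy independent noise at $V_j$), and the noises of the latent vertices are given the law of $E$ while the dummy noises are taken independent of everything. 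This is a legal E-system for $\expandN(\gG)$ because the bidirected component of $\expandN(\gG)$ connects the latent vertices exactly as $\gG$ connects the $V_j$ and leaves every $V_j$ bidirected-isolated, so the required Markov property is precisely \eqref{eq:independent-error} for $\gG$ together with the (forced) independence of the dummy noises.

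The main obstacle is the last inclusion: that marginalizing any E-model over an expansion $\gG'$ returns an E-model over $\gG=\marg_V(\gG')$. I would prove it by reducing to the removal of a single latent vertex and iterating, using that graph marginalization is vertex-wise and order-independent (\Cref{sec:marginalization}). So let $W\in V'\setminus V$ and $\gG''=\marg_{V'\setminus\{W\}}(\gG')$; I substitute $W=f_W(V'_{\pa_{\gG'}(W)},E_W)$ into the equation of every child $c$ of $W$, which replaces $c$'s parent set by $(\pa_{\gG'}(c)\setminus\{W\})\cup\pa_{\gG'}(W)$ --- exactly the parents of $c$ in $\gG''$ produced by latent projection --- and replaces $c$'s noise by the pair $(E_W,E_c)$, compressed back into $[0,1]$ by a measurable bijection as in \Cref{lem:admg-ne-e}. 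The delicate point is that the reduced noise is unconditionally Markov with respect to the bidirected component of $\gG''$. Given disjoint $\sJ,\sK$ with no bidirected edge between them in $\gG''$, I would form lifted index sets $\sJ^{+},\sK^{+}$ that adjoin $W$ whenever the set contains a child of $W$; the premise forces $W$ into at most one of them, since two children of $W$ lying in $\sJ$ and $\sK$ would be joined by a new bidirected edge $\bdedge$ in $\gG''$, so $\sJ^{+}$ and $\sK^{+}$ are disjoint. A short case analysis of the bidirected edges created by removing $W$ (common children of $W$, and children of $W$ versus bidirected partners of $W$) then shows that any bidirected edge between $\sJ^{+}$ and $\sK^{+}$ in $\gG'$ would have produced one between $\sJ$ and $\sK$ in $\gG''$; hence $\sJ^{+}$ and $\sK^{+}$ are bidirected-nonadjacent in $\gG'$, and \eqref{eq:independent-error} for $\gG'$ gives $E_{\sJ^{+}}\independent E_{\sK^{+}}$, so the reduced noises over $\sJ$ and $\sK$ are independent. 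Iterating this single-vertex step over all of $V'\setminus V$ yields $\marg_V(\P')\in\statmodelE(\gG,\mathbb{V})$ and closes the cycle.
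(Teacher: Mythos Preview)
Your proposal is correct and follows essentially the same route as the paper: a cycle of inclusions where the two trivial middle steps come from $\expandN(\gG)\in\expand_{\textnormal{U}}(\gG)\subseteq\expand(\gG)$, and the substantive closing step is a single-vertex elimination lemma (the paper's \Cref{lem:margin-E}) that substitutes the equation for the removed vertex into its children and verifies the bidirected Markov property of the merged noises by a short case analysis. Two minor presentational differences: for the first inclusion the paper invokes the already-established equivalence $\statmodelE=\statmodelNE$ from \Cref{thm:relation-statmodel} rather than building the $\expandN(\gG)$-system by hand as you do, and in the elimination lemma the paper checks the \emph{conditional} form of the Markov property (five cases with a conditioning set $L$) whereas you check only the unconditional form, which suffices because GM and UM coincide for bidirected graphs.
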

\begin{proof}
  We have
  \begin{align*}
    \statmodelE(\gG, \mathbb{V}) =& \statmodelE(\gG, \mathbb{V}) \tag*{(By \Cref{thm:relation-statmodel})}
    \\
    =&
                                            \marg_V(\statmodelGM(\expandN(\gG),
                                            \mathbb{V} \times [0,1]^{|V|}))
                                            \tag*{(By definition)}
    \\
    =&
       \marg_V(\statmodelE(\expandN(\gG), \mathbb{V} \times
       [0,1]^{|V|})) \tag*{(By \Cref{thm:relation-statmodel})} \\
    \subseteq& \bigcup_{\gG' \in \expand_{\textnormal{U}}(\gG)}
               \marg_V(\statmodelE(\gG', \mathbb{V} \times
               [0,1]^{|V(\gG')| - |V|})) \tag*{(By definition)} \\
    \subseteq& \bigcup_{\gG' \in \expand(\gG)}
               \marg_V(\statmodelE(\gG', \mathbb{V} \times
               [0,1]^{|V(\gG')| - |V|})). \tag*{(By definition)}
  \end{align*}
  It remains to prove that $\statmodelE(\gG, \mathbb{V}) \supseteq
  \marg_V(\statmodelE(\gG', \mathbb{V} \times [0,1]^{|V(\gG')| -
    |V|}))$ for all $\gG' \in \expand(\gG)$. This follows from
  \Cref{lem:margin-E} below.
\end{proof}

\begin{lemma} \label{lem:margin-E}
  For all $\gG \in \ADMGc(V)$ and $\tilde{V} \subseteq V$ that takes
  value in the subspace $\tilde{\mathbb{V}} \subseteq \mathbb{V}$, we
  have
  \begin{equation*}
    \marg_{\tilde{V}}(\mathbb{P}_{\textnormal{E}}(\gG, \mathbb{V})) \subseteq
    \mathbb{P}_{\textnormal{E}}(\marg_{\tilde{V}}(\gG), \tilde{\mathbb{V}}).
  \end{equation*}
\end{lemma}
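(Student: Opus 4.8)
The plan is to induct on the number of marginalized vertices. Because both graph marginalization and distribution marginalization compose and are order-independent (\Cref{sec:marginalization}), it suffices to treat the removal of a single vertex $V_m \notin \tilde V$, i.e.\ $\tilde V = V \setminus \{V_m\}$; eliminating the vertices of $V \setminus \tilde V$ one at a time along any order then yields the general statement. So fix $\P \in \statmodelE(\gG, \mathbb{V})$ together with a representing system $V_j = f_j(V_{\pa_{\gG}(j)}, E_j)$ whose noise $E$ is unconditionally Markov with respect to the bidirected component of $\gG$.

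First I would eliminate $V_m$ by recursive substitution (well defined since $\gG$ is acyclic): in the equation of every child $V_k \in \ch_{\gG}(V_m)$ I replace the argument $V_m$ by $f_m(V_{\pa_{\gG}(m)}, E_m)$, leaving all other equations unchanged. This yields, for each $V_j \in \tilde V$, a function $\tilde f_j$ and a noise $\tilde E_j$ equal to $E_j$ when $V_m \nordedge V_j$ and equal to $(E_j, E_m)$ (repackaged into $[0,1]$ by a fixed measurable bijection $h:[0,1]^2 \to [0,1]$, exactly as in the proof of \Cref{lem:admg-ne-e}) when $V_m \rdedge V_j$. Next I would check that $\tilde f_j$ depends only on $V_{\tilde\pa(j)}$, where $\tilde\pa(j)$ denotes the parents of $V_j$ in $\marg_{\tilde V}(\gG)$: since $V_m$ is the unique deleted vertex, a directed path into $V_j$ with no internal vertex in $\tilde V$ has length at most two and is either $V_i \rdedge V_j$ or $V_i \rdedge V_m \rdedge V_j$, so the parents of a child $V_k$ of $V_m$ become $(\pa_{\gG}(k)\setminus\{m\}) \cup \pa_{\gG}(m)$ --- precisely the variables the substitution introduces --- while every other vertex keeps its parents. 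Consequently $\marg_{\tilde V}(\P)$ is represented by the system $\{V_j = \tilde f_j(V_{\tilde\pa(j)}, \tilde E_j)\}_{V_j \in \tilde V}$, and it remains only to control the law $\tilde\Q$ of $(\tilde E_j)_{V_j \in \tilde V}$.

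The heart of the argument is to show that $\tilde\Q$ is unconditionally Markov with respect to the bidirected component of $\marg_{\tilde V}(\gG)$. For an index set $\sI \subseteq \{j: V_j \in \tilde V\}$ put $\phi(\sI) = \sI \cup \{m\}$ if $V_{\sI} \cap \ch_{\gG}(V_m) \neq \emptyset$ and $\phi(\sI) = \sI$ otherwise; then $\tilde E_{\sI}$ is a measurable function of $E_{\phi(\sI)}$. Since measurable functions of independent random vectors are independent, and since \eqref{eq:independent-error} for $\gG$ gives $E_{\phi(\sJ)} \independent E_{\phi(\sK)}$ once $\phi(\sJ)$ and $\phi(\sK)$ are disjoint with no bidirected edge between them in $\gG$, it suffices to prove that if $V_{\sJ} \nobdedge V_{\sK} \ingraph{\marg_{\tilde V}(\gG)}$ then (a) $\phi(\sJ) \cap \phi(\sK) = \emptyset$ and (b) there is no bidirected edge between $V_{\phi(\sJ)}$ and $V_{\phi(\sK)}$ in $\gG$. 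I would prove both by contraposition. For (a): if $m \in \phi(\sJ) \cap \phi(\sK)$ there are children $V_k \in V_{\sJ}$ and $V_{k'} \in V_{\sK}$ of $V_m$, and the common-cause arc $V_k \ldedge V_m \rdedge V_{k'}$ is a confounding arc whose only internal vertex $V_m$ is deleted, so $V_k \bdedge V_{k'} \ingraph{\marg_{\tilde V}(\gG)}$. For (b): a bidirected edge $V_a \bdedge V_b$ in $\gG$ with $a \in \phi(\sJ)$, $b \in \phi(\sK)$ either has $a,b \in \tilde V$, in which case the single edge survives marginalization, or has (say) $a = m$, in which case some child $V_k \in V_{\sJ}$ of $V_m$ gives the confounding arc $V_k \ldedge V_m \bdedge V_b$ and hence $V_k \bdedge V_b \ingraph{\marg_{\tilde V}(\gG)}$. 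In every case a bidirected edge appears between $V_{\sJ}$ and $V_{\sK}$ in $\marg_{\tilde V}(\gG)$, contradicting the hypothesis; this establishes $\marg_{\tilde V}(\P) \in \statmodelE(\marg_{\tilde V}(\gG), \tilde{\mathbb{V}})$ in the single-vertex case.

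I expect the combinatorial matching in (a)--(b) to be the main obstacle: one must pair each bidirected edge that marginalization creates through $V_m$ --- the common-cause pattern $\ldedge V_m \rdedge$ and the mixed patterns $\ldedge V_m \bdedge$ and $\bdedge V_m \rdedge$ --- with the way the shared noise $E_m$ is distributed to the children of $V_m$, while simultaneously confirming that the collider patterns $\bdedge V_m \bdedge$ and $\rdedge V_m \bdedge$ create neither a marginal bidirected edge nor any shared noise (they make $V_m$ a collider, so they do not yield confounding arcs, and $E_m$ never reaches a non-child of $V_m$). Getting the bookkeeping of $\phi$ to line up exactly with this dichotomy --- namely $m \in \phi(\sI)$ precisely when $V_{\sI}$ meets $\ch_{\gG}(V_m)$ --- is what makes the transfer of the Markov property correct. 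Once the single-vertex case is in hand, the induction over $V \setminus \tilde V$ and the $[0,1]$-valued repackaging of the noises are routine.
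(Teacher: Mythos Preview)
Your proof is correct and follows the same overall strategy as the paper: reduce to eliminating a single vertex by associativity of marginalization, substitute the eliminated vertex's equation into its children's equations, repackage the new noises via a measurable bijection $[0,1]^2\to[0,1]$, and then verify that the resulting noise vector is Markov with respect to the bidirected part of the marginal graph.

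The only notable difference is in the last step. The paper verifies the \emph{global} Markov property of $\tilde E$ (with a conditioning set $L$) through a five-way case analysis on which of $\sJ,\sK,\sL$ meet $\ch(j)$, appealing to the chain rule for conditional independence. You instead verify the \emph{unconditional} Markov property directly --- which is all \Cref{def:np-system} requires, and is equivalent to GM for bidirected graphs --- by introducing the map $\phi$ and reducing to a single application of \eqref{eq:independent-error} for $E_{\phi(\sJ)}$ and $E_{\phi(\sK)}$. Your contrapositive checks of (a) and (b) correspond exactly to the paper's cases 5 and 2--4 (with $L=\emptyset$), but packaged more economically. The paper's route has the side benefit of exhibiting the conditional independences explicitly; yours is shorter and avoids the graphoid manipulations.
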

\begin{proof}
  Because marginalization is associative, it suffices to prove this
  for $\tilde{V} = V \setminus \{V_j\}$ for all $V_j \in V$.
  Consider $\P \in \mathbb{P}_{\textnormal{E}}(\gG, \mathbb{V})$,
  so $V$ satisfy the equations in \eqref{eq:np-system} and $E$
  satisfies \eqref{eq:independent-error}. We need to show that
  $\marg_{\tilde{V}}(\P) \in \statmodelE(\marg_{\tilde{V}}(\gG),
  \tilde{\mathbb{V}})$.

  Consider the
  following modifications of the equations:
  \begin{equation}
    \label{eq:proj-np-system-1}
    V_k =
    \begin{cases}
      f_k(V_{\pa(k)}, E_k), & \text{if}~k \not \in \ch(j)~\text{and}~k
                              \neq j, \\
      f_k(V_{\pa(k) \setminus \{j\}}, f_j(V_{\pa(j)}, E_j), E_k),&
                                                                   \text{if}~k \in \ch(j),
    \end{cases}
  \end{equation}
  where $V_{\pa(k)}$ is the set of parents of $V_k$ and $V_{\ch(j)}$ is
  the set of children of $V_j$ in $\gG$.
  In words, we eliminate $V_j$ by plugging $V_j = f_j(V_{\pa(j)},
  E_j)$ in all the equations for the children of $V_j$ in $\gG$. We
  claim that this results in a nonparametric system with respect to
  $\tilde{\gG} = \marg_{\tilde{V}}(\gG)$:
  \begin{equation}
    \label{eq:proj-np-system-2}
    V_k = \tilde{f}_k(V_{\pa_{\tilde{\gG}}(k)}, \tilde{E}_k),~k \neq j,
  \end{equation}
  where $\pa_{\tilde{\gG}}(k)$ is the parent of $k$ in $\tilde{\gG}$,
  \[
    \tilde{E}_k =
    \begin{cases}
      E_k, & \text{if}~k \not \in \ch(j)~\text{and}~k
             \neq j, \\
      g(E_k,E_j),& \text{if}~k \in \ch(j),
    \end{cases}
  \]
  and $g$ is any bi-measurable\footnote{Meaning both $g$ and its
    inverse are measurable.} bijective map from $[0,1]^2$ to $[0,1]$ (for
  example, $g$ can be defined by interlacing the decimal expansions of
  its two arguments).
  To see this, marginalizing out $V_j$ in $\gG$ introduces the
  directed edges $V_{\pa(j)} \rdedge V_{\ch(j)}$, which are respected
  in the modified equations. Thus, the right hand side of
  \eqref{eq:proj-np-system-2} collects all the variables on the right
  hand side of \eqref{eq:proj-np-system-1}. It remains to prove that
  $\tilde{E}$ obeys the global Markov property with respect to the
  bidirected component of $\tilde{\gG}$.

  Consider disjoint $J,K,L \subset \tilde{V}$ such that
  \begin{equation}
    \label{eq:proj-np-system-bd-sep-g-tilde}
    \textnot J \samedist K \mid L \ingraph{\tildegG}.
  \end{equation}
  Because all bidirected edges in $\gG$ between vertices in
  $\tilde{V}$ are contained in $\tilde{\gG}$, it follows that
  \begin{equation}
    \label{eq:proj-np-system-bd-sep-g}
    \textnot J \samedist K \mid L \ingraph{\gG}.
  \end{equation}
  Let $J = V_{\sJ}, K = V_{\sK}, L = V_{\sL}$. It follows from the
  Markov property of $E$ that
  \begin{equation}
    \label{eq:proj-np-system-e-markov}
    E_{\sJ} \independent E_{\sK} \mid E_{\sL}.
  \end{equation}
  By construction,
  \[
    \tilde{E}_{\sJ} =
    \begin{cases}
      E_{\sJ},&\text{if} \sJ \cap \ch(j) = \emptyset, \\
      h(E_{\sJ \cup \{j\}}),&\text{if} \sJ \cap \ch(j) \neq \emptyset, \\
    \end{cases}
  \]
  where $h$ is some bijective map and similarly for $\tilde{E}_{\sK}$
  and $\tilde{E}_{\sL}$. We prove $\tilde{E}_{\sJ} \independent
  \tilde{E}_{\sK} \mid \tilde{E}_{\sL}$
  by considering the following cases:
  \begin{enumerate}
  \item $\sJ \cap \ch(j) = \sK \cap \ch(j) = \sL \cap \ch(j) =
    \emptyset$. The desired conclusion immediately follows from
    \eqref{eq:proj-np-system-e-markov}.
  \item $\sJ \cap \ch(j) \neq \emptyset$, $\sK \cap \ch(j) = \sL \cap
    \ch(j) = \emptyset$. We claim that
    \[
      \textnot V_j \samedist K \mid L, J \ingraph{\gG},
    \]
    otherwise there exists a walk like $J \ldedge V_j \samedist K \mid
    L, J \ingraph{\gG}$ that marginalizes to $J \samedist
    K \mid L, J \ingraph{\tilde{\gG}}$, which contradicts
    \eqref{eq:proj-np-system-bd-sep-g-tilde}. By the Markov property of $E$, we have
    \[
      E_j \independent E_{\sK} \mid E_{\sL}, E_{\sJ}.
    \]
    By \eqref{eq:proj-np-system-e-markov} and the chain rule for
    conditional independence, we obtain $E_{\sJ \cup \{j\}}
    \independent E_{\sK} \mid E_{\sL}$.
  \item $\sJ \cap \ch(j) \neq \emptyset$, $\sK \cap \ch(j) =
    \emptyset$, $\sL \cap \ch(j) \neq \emptyset$. We claim that
    \[
      \textnot J \samedist K \mid L, V_j \ingraph{\gG}.
    \]
    If this not true, there exists a walk like $J \samedist V_j
    \samedist K \mid L \ingraph{\gG}$ because of
    \eqref{eq:proj-np-system-bd-sep-g}. Thus, we have $J
    \ldedge V_j \samedist K \mid L \ingraph{\gG}$, which,
    after marginalization, contradicts
    \eqref{eq:proj-np-system-bd-sep-g-tilde}. It
    follows from the above claim that $E_{\sJ} \independent E_{\sK} \mid
    E_{\sL \cup \{j\}}$ and hence $E_{\sJ \cup \{j\}} \independent
    E_{\sK} \mid E_{\sL \cup \{j\}}$.
  \item $\sJ \cap \ch(j) = \emptyset$, $\sK \cap \ch(j) \neq
    \emptyset$. This is symmetric to the last two cases.
  \item $\sJ \cap \ch(j) \neq \emptyset$, $\sK \cap \ch(j) \neq
    \emptyset$. This is not possible, because the confounding arc
    $J \ldedge V_j \bdedge V_j \rdedge K \ingraph{\gG}$
    implies $J \bdedge K \ingraph{\tilde\gG}$, which
    contradicts \eqref{eq:proj-np-system-bd-sep-g-tilde}.
  \end{enumerate}
  This completes our proof of \Cref{lem:margin-E}. 
\end{proof}

\subsubsection{Clique expansion is complete (for DAG and unconfounded expansions)}
\label{sec:clique-expans-model}

Next, we prove that the CE model for ADMGs is the completion of the CE
model for unconfounded ADMGs.

\begin{proposition} \label{prop:CE-agnostic}
  For any $\gG \in \ADMGc(V)$, $|V| = d$, and product space
  $\mathbb{V} = \mathbb{V}_1 \times \dots \times \mathbb{V}_d$, we
  have
  \begin{equation*}
    \begin{split}
      \statmodelCE(\gG, \mathbb{V}) &= \marg_V\left(\statmodelCE\left(\expandC(\gG),
                                      \mathbb{V} \times
                                      [0,1]^{|V|}\right)\right) \\
                                    &= \bigcup_{\gG' \in \expand_{\textnormal{A}}(\gG)}
                                      \marg_V\left(\statmodelCE\left(\gG', \mathbb{V} \times [0,1]^{|V(\gG')| -
                                      |V|}\right)\right) \\
                                    &= \bigcup_{\gG' \in \expand_{\textnormal{U}}(\gG)}
                                      \marg_V\left(\statmodelCE\left(\gG', \mathbb{V} \times [0,1]^{|V(\gG')| -
                                      |V|}\right)\right) \\
                                    &= \bigcup_{\gG' \in \expand(\gG)}
                                      \marg_V\left(\statmodelCE\left(\gG', \mathbb{V} \times [0,1]^{|V(\gG')| -
                                      |V|}\right)\right).
    \end{split}
  \end{equation*}
\end{proposition}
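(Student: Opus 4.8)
The plan is to mirror the proof of \Cref{prop:admg-model} line for line, with the clique expansion $\expandC$ in place of the noise expansion $\expandN$ and DAG expansions in place of unconfounded expansions. The five expressions in the display are to be sandwiched between two copies of $\statmodelCE(\gG,\mathbb{V})$, so it suffices to establish one chain of inclusions together with a single reverse inclusion that closes the loop.

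First I would prove the base equality $\statmodelCE(\gG,\mathbb{V}) = \marg_V(\statmodelCE(\expandC(\gG),\cdot))$. Since $\expandC(\gG)$ is a DAG, all the models of \Cref{fig:relation-statmodel-dag} coincide on it by \Cref{thm:relation-statmodel}; in particular $\statmodelCE(\expandC(\gG),\cdot) = \statmodelGM(\expandC(\gG),\cdot)$, and taking the $V$-marginal reproduces the very definition of $\statmodelCE(\gG,\mathbb{V})$. Any discrepancy in the number of auxiliary $[0,1]$-coordinates is absorbed by a measurable bijection $[0,1]^n \to [0,1]$, exactly as elsewhere in the paper. Next, because $\DAG(V') \subseteq \UnADMGc(V') \subseteq \ADMGc(V')$ we have $\expand_{\textnormal{D}}(\gG) \subseteq \expand_{\textnormal{U}}(\gG) \subseteq \expand(\gG)$, so the three unions increase down the display; here $\expand_{\textnormal{D}}(\gG)$ denotes the DAG expansions defined at the start of this subsection. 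As $\expandC(\gG)$ is a DAG that marginalizes to $\gG$ (checked in \Cref{sec:marginalization}), it lies in $\expand_{\textnormal{D}}(\gG)$, so the canonical term $\marg_V(\statmodelCE(\expandC(\gG),\cdot))$ is one of the sets appearing in the DAG-expansion union. Combining these observations gives $\statmodelCE(\gG,\mathbb{V}) \subseteq (\text{DAG union}) \subseteq (\text{unconf.\ union}) \subseteq (\text{all-ADMG union})$.

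It then remains to close the loop by showing that the largest union is contained in $\statmodelCE(\gG,\mathbb{V})$; that is, $\marg_V(\statmodelCE(\gG',\cdot)) \subseteq \statmodelCE(\gG,\mathbb{V})$ for every $\gG' \in \expand(\gG)$. As in \Cref{prop:admg-model}, this reduces to the marginalization-stability of the clique-expansion model, $\marg_{\tilde{V}}(\statmodelCE(\gG,\mathbb{V})) \subseteq \statmodelCE(\marg_{\tilde{V}}(\gG),\tilde{\mathbb{V}})$ for $\tilde{V} \subseteq V$, which is the CE-analogue of \Cref{lem:margin-E}, namely \Cref{lem:margin-CE}: applying it with $\tilde{V} = V$ to $\gG'$ and using $\marg_V(\gG') = \gG$ yields the desired inclusion.

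The hard part of the whole argument is therefore \Cref{lem:margin-CE} itself. The clean substitution available for the E model---plug $V_j = f_j(\cdot)$ into the equations of its children and bundle noises (\Cref{lem:margin-E})---has no verbatim counterpart, because eliminating a vertex $V_j$ alters the \emph{latent clique} structure: confounding arcs through $V_j$ (patterns such as $V_a \bdedge V_j \rdedge V_b$ or $V_a \ldedge V_j \rdedge V_b$) create new bidirected edges, so the district of $V_j$ merges with $\ch(j)$ and $\mathcal{C}(\marg_{\tilde{V}}(\gG))$ is not obtained from $\mathcal{C}(\gG)$ in any naive way. The strategy I would use is, by associativity of marginalization, to eliminate one vertex $V_j$ at a time: start from a realization of a distribution in $\statmodelCE(\gG,\mathbb{V})$ by latent variables $(E_{\sJ})_{\sJ \in \mathcal{C}(\gG)}$ together with the conditional mechanisms generating $V$, absorb $V_j$'s mechanism into those of its children, and re-bundle the latents incident to $V_j$ (via measurable bijections $[0,1]^n \to [0,1]$) into latents indexed by $\mathcal{C}(\marg_{\tilde{V}}(\gG))$. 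This is exactly where the convention of using \emph{all} bidirected cliques rather than only the maximal ones pays off, as flagged in the footnote of \Cref{sec:pairw-cliq-noise}: every clique needed after marginalization is already present as a sub-clique---hence already carries its own latent variable---before marginalization, which keeps the reassignment bookkeeping tractable.
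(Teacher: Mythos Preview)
Your proposal is correct and follows essentially the same route as the paper: derive the forward chain of inclusions from $\expandC(\gG)\in\DAG$ together with $\expand_{\textnormal{D}}(\gG)\subseteq\expand_{\textnormal{U}}(\gG)\subseteq\expand(\gG)$, and close the loop via \Cref{lem:margin-CE}, whose proof proceeds by eliminating one vertex at a time, substituting $V_j$'s equation into its children, and re-indexing the clique latents. One small correction to your intuition about the all-cliques convention: it is \emph{not} true that every clique of $\marg_{\tilde V}(\gG)$ is already a (sub-)clique of $\gG$ (in \Cref{ex:dag-agnostic}, the clique $\{2,3,6\}$ appears only after marginalizing out $V_4$); the actual mechanism is that each original clique $\sJ\ni j$ maps to the set $\sJ\setminus\{j\}\cup\ch_{\gG}(j)$, which is always a (possibly non-maximal) clique of the marginal, while any remaining cliques of the marginal are simply assigned a constant latent.
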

\begin{proof}
  The proof is similar to that of \Cref{prop:admg-model}.
  Recall that $\expandC(\gG)$ is always a DAG, so
  \begin{align*}
    \statmodelCE(\gG, \mathbb{V}) =&
                                     \marg_V(\statmodelGM(\expandC(\gG),
                                     \mathbb{V} \times [0,1]^{|V|}))
                                     \tag*{(By definition)}
    \\
    =&
       \marg_V(\statmodelCE(\expandC(\gG), \mathbb{V} \times
       [0,1]^{|V|})) \tag*{(By \Cref{thm:relation-statmodel})} \\
    \subseteq& \bigcup_{\gG' \in \expand_{\textnormal{A}}(\gG)}
               \marg_V(\statmodelCE(\gG', \mathbb{V} \times
               [0,1]^{|V(\gG')| - |V|})) \tag*{(By definition)} \\
    \subseteq& \bigcup_{\gG' \in \expand_{\textnormal{U}}(\gG)}
               \marg_V(\statmodelCE(\gG', \mathbb{V} \times
               [0,1]^{|V(\gG')| - |V|})) \tag*{(By definition)} \\
    \subseteq& \bigcup_{\gG' \in \expand(\gG)}
               \marg_V(\statmodelCE(\gG', \mathbb{V} \times
               [0,1]^{|V(\gG')| - |V|})). \tag*{(By definition)}
  \end{align*}
  The reverse direction follows from \Cref{lem:margin-CE} below.
\end{proof}

\begin{lemma} \label{lem:margin-CE}
  For all $\gG \in \ADMGc(V)$ and $\tilde{V} \subseteq V$ that takes
  value in the subspace $\tilde{\mathbb{V}} \subseteq \mathbb{V}$, we
  have
  \begin{equation*}
    \marg_{\tilde{V}}(\statmodelCE(\gG, \mathbb{V})) \subseteq
    \statmodelCE(\marg_{\tilde{V}}(\gG), \tilde{\mathbb{V}}).
  \end{equation*}
\end{lemma}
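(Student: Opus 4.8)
The plan is to run the clique-expansion analogue of the argument for \Cref{lem:margin-E}, taking advantage of the fact that the latent variables of a clique expansion only need to be \emph{mutually independent} (rather than satisfy a bidirected Markov property), so the error bookkeeping is much lighter. First I would reduce to $\tilde V = V \setminus \{V_i\}$ for a single vertex $V_i$, since marginalization is associative. Writing $\tilde{\gG} = \marg_{\tilde V}(\gG)$, I would take $\P \in \statmodelCE(\gG,\mathbb{V})$ and, via the quantile construction already used in \Cref{lem:admg-ne-e}, represent the witnessing DAG--global Markov distribution on $\expandC(\gG)$ by a structural equation system
\[
  V_k = f_k\bigl(V_{\pa_{\gG}(k)}, (E_{\sJ} : \sJ \in \mathcal{C}(\gG),\, k \in \sJ)\bigr), \quad k = 1,\dots,d,
\]
in which the clique latents $(E_{\sJ})_{\sJ \in \mathcal{C}(\gG)}$ are mutually independent. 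Here the singleton cliques $\{k\} \in \mathcal{C}(\gG)$ supply the private noise of each $V_k$, which is precisely why keeping \emph{all} bidirected cliques (not just the maximal ones) in the definition is convenient.

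Next I would eliminate $V_i$ by substituting its equation into those of its children, exactly as in \eqref{eq:proj-np-system-1}; this leaves the joint law of $\tilde V$ unchanged, and each child $V_k$ (for $k \in \ch_{\gG}(i)$) now depends on $V_{\pa_{\tilde{\gG}}(k)}$ together with the latents $\{E_{\sJ} : k \in \sJ\} \cup \{E_{\sJ} : i \in \sJ\}$. The combinatorial heart is the clique-relabelling map
\[
  \phi(\sJ) =
  \begin{cases}
    \sJ, & i \notin \sJ, \\
    (\sJ \setminus \{i\}) \cup \ch_{\gG}(i), & i \in \sJ,
  \end{cases}
\]
together with two facts to be verified from the marginalization rules of \Cref{sec:marginalization}: (i) $\phi(\sJ) \in \mathcal{C}(\tilde{\gG})$ for every $\sJ \in \mathcal{C}(\gG)$; and (ii) for each $k \in \tilde V$, the latent $E_{\sJ}$ survives in the substituted equation for $V_k$ if and only if $k \in \phi(\sJ)$. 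Fact (i) amounts to checking that any two elements of $(\sJ\setminus\{i\})\cup\ch_{\gG}(i)$ are joined in $\tilde{\gG}$ by a confounding arc through $V_i$ (of the forms $V_j \ldedge V_i \rdedge V_{j'}$, $V_m \bdedge V_i \rdedge V_j$, or a surviving $V_m \bdedge V_{m'}$), so they become bidirected neighbours; fact (ii) is the bookkeeping identity that the arguments of $f_k$ after substitution are exactly the latents that $\phi$ sends to a clique containing $k$.

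Finally I would repackage the latents. Because $\phi$ is a function, the old latents split into disjoint blocks $\{E_{\sJ} : \phi(\sJ) = \sK\}$ indexed by $\sK \in \mathcal{C}(\tilde{\gG})$; defining $E'_{\sK}$ as a bimeasurable bijective encoding of each block into $[0,1]$ (and an independent dummy when a block is empty) produces latents $(E'_{\sK})_{\sK \in \mathcal{C}(\tilde{\gG})}$ that are mutually independent, since the blocks are disjoint and the $E_{\sJ}$ were independent. By fact (ii), each $V_k$ can then be written as $\tilde f_k\bigl(V_{\pa_{\tilde{\gG}}(k)}, (E'_{\sK} : \sK \in \mathcal{C}(\tilde{\gG}),\, k \in \sK)\bigr)$, which is a structural realization over $\expandC(\tilde{\gG})$; hence the joint law of $(\tilde V, (E'_{\sK}))$ factorizes according to that DAG, and its $\tilde V$-marginal---namely $\marg_{\tilde V}(\P)$---lies in $\statmodelCE(\tilde{\gG}, \tilde{\mathbb{V}})$. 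The multivariate case reduces to this one by the componentwise quantile transform, as in \Cref{lem:admg-ne-e}.

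I expect the main obstacle to be fact (i): confirming that $\phi$ always lands in $\mathcal{C}(\tilde{\gG})$ requires the careful enumeration of confounding arcs through $V_i$ and is the point where the argument genuinely uses the bidirected marginalization semantics. By contrast, the independence of the new latents---the analogue of the delicate five-case conditional-independence check in \Cref{lem:margin-E}---comes essentially for free here, since disjointness of the preimage blocks under $\phi$ immediately yields mutual independence.
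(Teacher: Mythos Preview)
Your proposal is correct and takes essentially the same approach as the paper: reduce to removing a single vertex, substitute its equation into those of its children, and relabel each old clique $\sJ$ by $\phi(\sJ) = \sJ$ if $i \notin \sJ$ and $\phi(\sJ) = (\sJ \setminus \{i\}) \cup \ch_{\gG}(i)$ otherwise, then bundle the old latents into new ones along the fibres of $\phi$. Your presentation via the single map $\phi$ and its preimage blocks is in fact slightly cleaner than the paper's case split (which handles the collision $\phi(\sJ) \in \mathcal{C}(\gG)$ separately), since the preimage formulation automatically accommodates all collisions of $\phi$ at once; but the underlying combinatorics---fact (i) that $\phi(\sJ)$ is a bidirected clique of $\tilde{\gG}$ via confounding arcs through $V_i$, and the observation that each $E_{\sJ}$ lands in exactly one new latent---is identical.
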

\begin{proof}
  Similar to the proof of \Cref{lem:margin-E}, it suffices to prove
  this for $\tilde{V} = V \setminus \{V_j\}$ for all $V_j \in
  V$. Let $\tilde{\gG} = \marg_{\tilde{V}}(\gG)$.

  Let $\P \in \statmodelCE(\gG, \mathbb{V})$, so by definition,
  there exists $\P' \in \statmodelGM(\expandC(\gG), \mathbb{V} \times
  [0,1]^{|\mathcal{C}(\gG)|})$ such that $\P = \marg_V(\P')$.
  Because
  $\expandC(\gG)$ is a DAG, this means that $\P'$ is also a
  nonparametric system of equations (by \Cref{thm:relation-statmodel}),
  that is
  \[
    V_k = f_k(V_{\pa_{\gG}(k)}, C_k),~k=1,\dots,d
  \]
  for some functions $f_1,\dots,f_d$, $C_k = (E_{\sJ}: k \in \sJ \in
  \mathcal{C}(\gG))$, and $E_{\sJ}, \sJ \in \mathcal{C}(\gG)$ are
  independent random variables over $[0,1]$ under
  $\P'$. We would like to show that $\marg_{\tilde{V}}(\P') \in
  \statmodelCE(\tilde{\gG}, \tilde{\mathbb{V}})$, which requires us to
  rewrite the equations as
  \begin{equation}
    \label{eq:proj-np-system-4}
    V_k = \tilde{f}_k(V_{\pa_{\tilde{\gG}}(k)}, \tilde{C}_k),~k \neq j,
  \end{equation}
  where $\pa_{\tilde{\gG}}(k)$ is the parent of $k$ in $\tilde{\gG}$,
  $\tilde{C}_k = (\tilde{E}_{\tilde{\mathcal{J}}}: k \in
  \tilde{\mathcal{J}} \in \mathcal{C}(\tilde{\gG}))$, and
  $\tilde{E}_{\tilde{\mathcal{J}}}$, $\tilde{\mathcal{J}} \in
  \mathcal{C}(\tilde{\gG})$ are independent.

  It is not difficult to
  see that
  \begin{enumerate}
  \item Any bidirected clique in $\gG$ that does not contain $V_j$
    remains a bidirected clique in $\tilde{\gG}$. That is, for any
    $\sJ \in \mathcal{C}(\gG)$ such that $j \not \in \sJ$, we have
    $\sJ \in \mathcal{C}(\tilde{\gG})$. In this case, define
    $\tilde{E}_{\sJ} = E_{\sJ}$ (unless it is redefined below).
  \item Any bidirected clique in $\gG$ that contains $V_j$, after
    removing $V_j$ and adding $V_{\ch_{\gG}(j)}$, is a bidirected clique in
    $\tilde{\gG}$. That is, for any  $\sJ \in \mathcal{C}(\gG)$ such
    that $j \in \sJ$, we have
    $\tilde{\mathcal{J}} = \sJ \setminus \{j\} \cup \ch_{\gG}(j) \in
    \mathcal{C}(\tilde{\gG})$. In this case, define
    \[
      \tilde{E}_{\tilde{\mathcal{J}}} =
      \begin{cases}
        E_{\sJ}, & \text{if}~\tilde{\mathcal{J}} \not \in \mathcal{C}(\gG), \\
        g_{\sJ}(E_{\sJ}, E_{\tilde{\mathcal{J}}}),
                 &\text{if}~\tilde{\mathcal{J}} \in
                   \mathcal{C}(\gG)~\text{(this redefines the
                   variable)},
      \end{cases}
    \]
    where $g_{\sJ}$ is an appropriate bijection from its domain to
    $[0,1]$.
  \end{enumerate}
  There may be other cliques in $\tilde{\gG}$, but we do not need to
  consider them and will set the corresponding $\tilde{E}$ variable to
  be $0$. See \Cref{ex:dag-agnostic} below for an illustration of the
  above construction.

  It is easy to see that $\tilde{E}_{\tilde{\mathcal{J}}}$, $\tilde{\sJ} \in
  \mathcal{C}(\tilde{\gG})$ are independent because each variable
  $E_{\mathcal{J}}$ appears in exactly one
  $\tilde{E}_{\tilde{\mathcal{J}}}$. Now we prove that
  \eqref{eq:proj-np-system-4} is true. Similar to the proof of
  \Cref{lem:margin-E}, we eliminate
  $V_j$ by plugging its equation in all the equations for the children
  of $V_j$, so
  \begin{equation*}
    V_k =
    \begin{cases}
      f_k(V_{\pa_{\gG}(k)}, C_k), & \text{if}~k \not \in \ch_{\gG}(j)~\text{and}~k
                                    \neq j, \\
      f_k(V_{\pa_{\gG}(k) \setminus \{j\}}, f_j(V_{\pa(j)}, C_j), C_k),&
                                                                         \text{if}~k
                                                                         \in
                                                                         \ch_{\gG}(j),
    \end{cases}
  \end{equation*}
  Let us first prove \eqref{eq:proj-np-system-4} for $k \not \in
  \ch_{\gG}(j)$, so we know $\pa_{\tilde{\gG}}(k) = \pa_{\gG}(k)$. It suffices to
  show that every term in $E_{\sJ} \in C_k$ (so $k \in \sJ \in \mathcal{C}(\gG)$)
  shows up in $\tilde{C}_k$. This is true because
  \begin{enumerate}
  \item If $j \not \in \sJ$, then $E_{\sJ}$ is contained in
    $\tilde{E}_{\sJ} \in \tilde{C}_k$ by construction;
  \item If $j \in \sJ$, then $E_{\sJ}$ is contained in
    $\tilde{E}_{\tilde{\mathcal{J}}}$ for $\tilde{\mathcal{J}} = \sJ
    \setminus \{j\} \cup \ch_{\gG}(j)$ (it is easy to check that $k
    \in \tilde{\mathcal{J}}$ and $\tilde{\mathcal{J}} \in
    \mathcal{C}(\tilde{\gG})$ so $\tilde{E}_{\tilde{\mathcal{J}}} \in
    \tilde{C}_k$).
  \end{enumerate}
  Next us first prove \eqref{eq:proj-np-system-4} for $k \in
  \ch_{\gG}(j)$, so we know $\pa_{\tilde{\gG}}(k) = \pa_{\gG}(k) \setminus
  \{j\} \cup \pa_{\gG}(j)$ (by the definition of graph
  marginalization). It suffices to show that every term
  $E_{\sJ} \in C_j \cup C_k$ appears on the right hand side of
  \eqref{eq:proj-np-system-4}. If $E_{\sJ} \in C_k$ the same argument
  as above (for $k \not \in \ch_{\gG}(j)$) applies. If $E_{\sJ} \in
  C_j$ (so $j \in \sJ$), we can use the second argument as above (note
  that $k \in \tilde{\mathcal{J}}$ is still true because $k \in
  \ch_{\gG}(j)$).
\end{proof}

  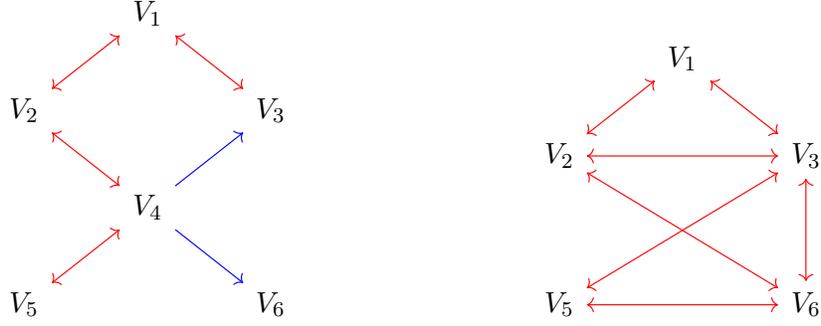
\begin{figure}[t]
    \centering
    \begin{subfigure}[b]{0.4\textwidth}
      \[
        \begin{tikzcd}
          & V_1 \arrow[ld, leftrightarrow, red] \arrow[rd, leftrightarrow, red] & \\
          V_2 \arrow[rd, leftrightarrow, red] & & V_3 \\
          & V_4 \arrow[ld, leftrightarrow, red] \arrow[ur, blue] \arrow[dr,
          blue] & \\
          V_5 & & V_6
        \end{tikzcd}
      \]
      \caption{Cliques: 1, 2, 3, 4, 5, 6, 12, 13, 24, 45.\\ \phantom{}}
      \label{fig:new-clique-a}
    \end{subfigure} \quad
    \begin{subfigure}[b]{0.4\textwidth}
      \[
        \begin{tikzcd}
          & V_1 \arrow[ld, leftrightarrow, red] \arrow[rd, leftrightarrow, red] & \\
          V_2 \arrow[rr, leftrightarrow, red] \arrow[rrdd, leftrightarrow,
          red] & & V_3 \arrow[dd, leftrightarrow, red] \\
          & & \\
          V_5 \arrow[rr, leftrightarrow, red] \arrow[rruu, leftrightarrow, red] & & V_6
        \end{tikzcd}
      \]
      \caption{Cliques: 1, 2, 3, 5, 6, 12, 13, 23, 26, 35, 36, 56,
        123, 236, 356.}
      \label{fig:new-clique-b}
    \end{subfigure}
    \caption{Marginalization can create many new cliques.}
    \label{fig:new-clique}
  \end{figure}

\begin{example} \label{ex:dag-agnostic}
  As an example to illustrate the construction of the proof above, let
  $\gG$ be the graph in \Cref{fig:new-clique-a}, so the nonparametric
  equation system for the clique expansion graph is given by
\begin{alignat*}{2}
 V_1 &= f_1(E_1, E_{12}, E_{13}) &&= \tilde{f}_1(\tilde{E}_1, \tilde{E}_{12},
                                    \tilde{E}_{13}, \tilde{E}_{123}), \\
 V_2 &= f_2(E_2, E_{12}, E_{24}) &&= \tilde{f}_2(\tilde{E}_2, \tilde{E}_{12},
                               \tilde{E}_{23}, \tilde{E}_{24},
                                    \tilde{E}_{123}, \tilde{E}_{236}),
  \\
 V_3 &= f_3(f_4(E_4, E_{24}, E_{45}), E_3, E_{13}) &&= \tilde{f}_3(\tilde{E}_3, \tilde{E}_{13},
                                    \tilde{E}_{23}, \tilde{E}_{35},
                            \tilde{E}_{36}, \tilde{E}_{123},
                                                      \tilde{E}_{236},
                                                      \tilde{E}_{356}), \\
 V_5 &= f_5(E_5, E_{45}) &&= \tilde{f}_5(\tilde{E}_5, \tilde{E}_{35},
                               \tilde{E}_{56},
                                    \tilde{E}_{356}),
  \\
 V_6 &= f_6(f_4(E_4, E_{24}, E_{45}), E_6) &&= \tilde{f}_6(\tilde{E}_6, \tilde{E}_{26},
                                    \tilde{E}_{36}, \tilde{E}_{56},
                                                      \tilde{E}_{236},
                                                      \tilde{E}_{356}),
\end{alignat*}
where $\tilde{E}_{\cdot} = E_{\cdot}$ for $\cdot \in
\{1,2,3,5,6, 12, 13\}$, $\tilde{E}_{36} = E_4$, $\tilde{E}_{236} =
E_{24}$, $\tilde{E}_{356} = E_{45}$, and $\tilde{E}_{\cdot} = 0$ for
$\cdot \in \{23, 24, 26, 35, 36, 123\}$.
\end{example}

\subsubsection{UM is complete (for DAG and unconfounded expansions)}
\label{sec:um-agnostic}

Because the UM model is different from common interpretations of
ADMGs, let us use an example to warm up.
\begin{example}
Consider the instrumental variable graph
\[
  \begin{tikzcd}
    Z \arrow[r, blue] & X \arrow[r, blue] \arrow[r, red,
    leftrightarrow, bend left] & Y
  \end{tikzcd}
\]
and its clique expansion
\[
  \begin{tikzcd}
    & & E_{XY} \arrow[ld, blue] \arrow[rd, blue] & \\
    Z \arrow[r, blue] & X \arrow[rr, blue] & & Y
  \end{tikzcd}
\]
The UM model for the instrumental variable graph contains all
probability distributions of $(Z, X, Y)$ because they are all
connected by arcs. However, it is well known that a latent
variable
interpretation of this graph imposes inequality constraints
\parencite{balkeBoundsTreatmentEffects1997}. The latent variable
interpretation implicitly
assumes the usual interpretation of DAGs (e.g.\ factorization, global
Markov, or any of their equivalences in
\Cref{fig:relation-statmodel-dag}), which contains all distributions
of $(Z, X, Y, E_{XY})$ such that $Z \independent E_{XY}$ and $Y
\independent Z \mid X, E_{XY}$. In contrast, the UM model
contains all distributions of $(Z, X, Y, E_{XY})$ such that $E_{XY}
\independent Z$, which impose no constraint on the marginal
distribution of $(Z, X, Y)$.
\end{example}

We now prove that the UM model is complete with respect to
DAG and unconfounded graph expansions. First, we show
\[
  \statmodelUM(\gG) \subseteq \bigcup_{\gG' \in
    \expand_{\textnormal{D}}(\gG)} \marg_V(\statmodelUM(\gG'))
\]
with an almost trivial construction. Consider any $\P \in
\statmodelUM(\gG)$ with density function $\p(v)$. Consider the clique
expansion of $\gG$ and the density function
\[
  \p'(v, e) = \p(v) \q(e),
\]
where $\q$ is density function of the uniform distribution over
$[0,1]^{|\mathcal{C}(\gG)|}$ (so $\q(e) = 1$ for all $e$). It is
obvious that $\p'$ marginalizes to
$\p$, and $\p'$ satisfies the unconditional Markov property with
respect to the clique expansion graph.

The reverse direction follows from the fact that marginalization
preserves m-connection. That is, for disjoint $J, K \subseteq V
\subseteq V'$ and graphs $\gG \in \ADMGc(V), \gG' \in \ADMGc(V')$, if
$\marg_V(\gG') = \gG$, then $J \mconnarc K \ingraph{\gG}$ if and only
if $J \mconnarc K \ingraph{\gG'}$ \parencite[see, for
example,][Theorem 2]{guoConfounderSelectionIterative2023}.

\subsubsection{Other ADMG models are not complete}
\label{sec:other-admg-models}

Because the E model is equivalent to the NM, LM, GM, and A models when the graph is
unconfounded, by \Cref{prop:admg-model}, the corresponding model for
general ADMGs as defined by \eqref{eq:agnosticism} is also the E
model. By \Cref{thm:relation-statmodel}, the E model is different from
the NM, LM, GM, and A models for general ADMGs. Thus, the NM, LM, GM, and A
models are not complete with respect to unconfounded graph
expansions. Similarly, they are not complete with respect to DAG expansions.

It remains to show that PE is not complete. Consider the
``bidirected 3-cycle'' with edges $A \bdedge B$, $B \bdedge C$, and $C
\bdedge A$. If the PE model is complete with respect to DAG or
unconfounded expansions, it should contain the
$(A,B,C)$-marginal of the DAG with edges $U \rdedge A$, $U \rdedge B$,
$U \rdedge C$, which places no restrictions on the distribution of
$(A,B,C)$. However, the PE model has some inequality constraints; see
\textcite[Example 2.11]{fritzBellTheoremCorrelation2012a}. So the PE
model is ``too small''.


\subsection{Proof of \Cref{prop:consistency-po}}
\label{sec:proof-crefl-po}

  Suppose $V_{\sI'}(v_{\sI}) = v_{\sI'}$. Consider any $V_j \in V$. It
  follows from \eqref{eq:basic-consistency} that
  \begin{align*}
    V_j(v_{\sI}, v_{\sI'}) &= V_j(v_{\pa(j) \cap \sI}, v_{\pa(j) \cap \sI'},
                             V_{\pa(j) \setminus (\sI \cup
                             \sI')}(v_{\sI},v_{\sI'}))
  \end{align*}
  and
  \begin{align*}
    V_j(v_{\sI}) &= V_j(v_{\pa(j) \cap \sI}, V_{\pa(j) \cap \sI'}(v_{\sI}),
                   V_{\pa(j) \setminus (\sI \cup \sI')}(v_{\sI})) \\
    &= V_j(v_{\pa(j) \cap \sI}, v_{\pa(j) \cap \sI'},
    V_{\pa(j) \setminus (\sI \cup \sI')}(v_{\sI})).
  \end{align*}
  Thus, it suffices to show that, if $\pa(j) \setminus (\sI \cup
  \sI')$ is not empty,
  \[
    V_{\pa(j) \setminus (\sI \cup \sI')}(v_{\sI},v_{\sI'}) = V_{\pa(j)
      \setminus (\sI \cup \sI')}(v_{\sI}).
  \]
  The proof can then be completed by an induction argument.

\subsection{Proof of \Cref{prop:no-direct-effect}}
\label{sec:proof-crefpr-direct}

It suffices to prove the claim for $\sJ = \{j\}$. In this case, it
follows from the condition $\textnot V_{\sL} \rdpath V_j \mid
V_{\sK} \ingraph{\gG}$ that $\pa(j) \cap \sL = \emptyset$ and
$\textnot V_{\sL} \rdpath V_{\pa(j) \setminus \sK} \mid V_{\sK}
\ingraph{\gG}$. By applying the recursive substitution in
\eqref{eq:basic-consistency}, we have
\begin{align*}
  \P(V_j(v_{\sK}, v_{\sL}) = V_j(v_{\sK})) &= \P\left(V_j(v_{\pa(j) \cap
    \sK}, V_{\pa(j) \setminus \sK}(v_{\sK}, v_{\sL})) = V_j(v_{\pa(j) \cap
                                             \sK}, V_{\pa(j) \setminus \sK}(v_{\sK}))\right) \\
  & \geq \P(V_{\pa(j) \setminus \sK}(v_{\sK}, v_{\sL}) = V_{\pa(j) \setminus \sK}(v_{\sK})).
\end{align*}
The proof can then be completed by an induction argument.

\subsection{Proof of \Cref{prop:causal-imply-ordinary-markov}}
\label{sec:proof-crefl-imply}

Consider $\gG \in \ADMGc(V)$ and $\P \in \causalmodel(\gG,
\mathbb{V})$. Because $\gG$ is acyclic, for any
$V_{\sI} \subset V$, there always exists $V_j \not \in V_{\sI}$ such
that $\de_{\gG}(V_j) \subseteq V_{\sI}$.
By the definition of causal Markov model and in particular
\eqref{eq:independent-basic-po}, $\marg_{V(v)}(\P) \in
\statmodelGM(\gG(V(v)),
\mathbb{V})$. \Cref{prop:causal-imply-ordinary-markov} then follows
from repeatedly applying the following result.

\begin{lemma}[Recursive substitution preserves global Markov property] \label{lem:preserve-GM}
  Consider any 
  $V_{\mathcal{I}}\subset V$ and $V_j \not \in V_{\sI}$ such that
  $\de_{\gG}(V_j) \subseteq V_{\mathcal{I}}$. Let
  $\sI' = \sI \cup \{j\}$. If $\marg_{V(v_{\mathcal{I}'})}(\P) \in
  \statmodelGM(\gG(v_{\mathcal{I}'}), \mathbb{V})$, then
  $\marg_{V(v_{\mathcal{I}})}(\P) \in
  \statmodelGM(\gG(v_{\mathcal{I}}), \mathbb{V})$
\end{lemma}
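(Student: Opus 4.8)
The plan is to prove this single ``un-fixing'' step by comparing the two single-world graphs and distributions directly. Observe that $\gG(v_{\sI})$ is obtained from $\gG(v_{\sI'})$ by re-inserting exactly the outgoing edges $V_j \rdedge V_c$ for $V_c \in \ch_{\gG}(j)$; since $\de_{\gG}(V_j) \subseteq V_{\sI}$, every such child lies in $V_{\sI}$ and is therefore a sink in $\gG(v_{\sI})$ whose potential outcome $V_c(v_{\sI}) = v_c$ is a constant. Two reductions set up the argument. First, for every $k \neq j$ with $k \notin \sI$ there is no directed path from $V_j$ to $V_k$ avoiding $V_{\sI}$ (any such path would enter a child of $V_j$, which is already in $\sI$), so $\textnot V_j \rdpath V_k \mid V_{\sI} \ingraph{\gG}$ and \Cref{prop:no-direct-effect} gives $V_k(v_{\sI}) = V_k(v_{\sI'})$ almost surely; the cases $k \in \sI$ are trivial since both equal $v_k$. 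Hence the two potential-outcome vectors agree in all coordinates except the $j$-th, where $V_j(v_{\sI'}) = v_j$ is degenerate while $V_j(v_{\sI})$ is random. Second, because every vertex indexed by $\sI$ is constant, all conditional independences involving such vertices are vacuous, so it suffices to verify the required independences for disjoint $A,B,C \subseteq [d] \setminus \sI$.

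Fix such $A,B,C$ with $\textnot A \mconn B \mid C \ingraph{\gG(v_{\sI})}$. Since $\gG(v_{\sI'})$ is a subgraph of $\gG(v_{\sI})$ (fewer edges), every walk in $\gG(v_{\sI'})$ is a walk in $\gG(v_{\sI})$, so m-separation transfers downward: $\textnot A \mconn B \mid C \ingraph{\gG(v_{\sI'})}$. When $j \notin A \cup B \cup C$, the hypothesis $\marg_{V(v_{\sI'})}(\P) \in \statmodelGM(\gG(v_{\sI'}),\mathbb{V})$ yields $A \independent B \mid C \underdist{\marg_{V(v_{\sI'})}(\P)}$, and as all indices involved are $\neq j$ this is exactly the desired independence in the $\sI$-world by the first reduction. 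The two remaining cases, $j$ an endpoint and $j$ in the conditioning set, use two structural facts: by basic consistency \eqref{eq:basic-consistency}, $V_j(v_{\sI})$ is a deterministic measurable function $g$ of the $\sI$-world outcomes of its parents $P := \pa_{\gG}(j) \setminus \sI$; and in $\gG(v_{\sI'})$ the vertex $V_j$ carries only arrowhead edges (incoming directed and bidirected), hence is a collider on every walk passing through it.

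For the endpoint case, say $j \in A$, write $V_j(v_{\sI}) = g(V_P(v_{\sI}))$. Tracing walks shows $V_j$ contributes a tail only along the re-inserted edges into sinks, so $\textnot V_j \mconn B \mid C \ingraph{\gG(v_{\sI})}$ forces the parents in $P$ (together with $A \setminus \{j\}$) to be separated from $B$ given $C$ in $\gG(v_{\sI'})$; applying GM there and appending the function $g$ (the easy direction of determinism) gives the claim. For the case $j \in C$, use that $V_j$ is only ever a collider in $\gG(v_{\sI'})$: deleting it from the conditioning set can only close walks, so $\textnot A \mconn B \mid C \setminus \{j\} \ingraph{\gG(v_{\sI'})}$ and GM yields $A \independent B \mid C \setminus \{j\}$ among the index-$\neq j$ variables in the $\sI$-world. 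It then remains to re-insert the now-random $V_j(v_{\sI})$ into the conditioning set via a graphoid contraction/weak-union step: it suffices to establish an auxiliary separation of $V_j$ from $A$ given $(C \setminus \{j\}) \cup B$, which through $V_j = g(V_P)$ reduces once more to a separation statement about $P$ in $\gG(v_{\sI'})$.

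The hard part will be this last case. Conditioning on additional variables can in general destroy independence, so passing from $A \independent B \mid C \setminus \{j\}$ to $A \independent B \mid C$ is not automatic; it must be driven by the determinism of $V_j(v_{\sI})$ given its parents and by a careful translation of m-separations between $\gG(v_{\sI})$ and $\gG(v_{\sI'})$ in the neighbourhood of $V_j$ and its sink children. The delicate point is that the re-inserted edges $V_j \rdedge V_c$ can, via conditioning on the (constant) children, open collider paths in $\gG(v_{\sI})$; one must verify that these openings are harmless precisely because the children are degenerate, while the genuine separations among the non-degenerate variables are preserved. Assembling the three cases and then running the downward induction on $|\sI|$ described before \Cref{lem:preserve-GM} completes the proof of \Cref{prop:causal-imply-ordinary-markov}.
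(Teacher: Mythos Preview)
Your case split on whether $j$ lies in an endpoint set, in the conditioning set, or in neither mirrors the paper's structure, but two of your core claims are false under this paper's conventions, and the argument collapses without them.

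First, you assert that $V_k(v_{\sI}) = v_k$ is constant for $k \in \sI$, and in particular that $V_j(v_{\sI'}) = v_j$ is degenerate. In this paper the potential outcome $V_k(v_{\sI})$ is defined for \emph{every} $k$ via basic consistency \eqref{eq:basic-consistency}, not by declaring intervened coordinates to take their fixed values; the SWIG $\gG(v_{\sI})$ deliberately omits the ``fixed'' nodes of \textcite{richardson2013single} (see the footnote following its definition) and keeps $V_k(v_{\sI})$ as a random vertex even when $k \in \sI$. Concretely, the proof of \Cref{prop:fixing} uses $V_j(v_j) = V_j$. The correct statement is that $V_k(v_{\sI}) = V_k(v_{\sI'})$ for all $k \notin \ch_{\gG}(j)$ (this \emph{includes} $k = j$), whereas for $k \in \ch_{\gG}(j) \subseteq \sI$ the two potential outcomes are genuinely different random variables. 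Your reduction to $A,B,C \subseteq [d] \setminus \sI$ is therefore illegitimate: conditional independences involving coordinates in $\sI$ are not vacuous and must be checked.

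Second, basic consistency gives $V_j(v_{\sI}) = V_j\big(v_{\pa(j) \cap \sI}, V_P(v_{\sI})\big)$, but the map $w \mapsto V_j(v_{\pa(j) \cap \sI}, w)$ is itself a \emph{random} function (it carries the exogenous noise at vertex $j$; cf.\ \eqref{eq:independent-basic-po}). Hence $V_j(v_{\sI})$ is not a deterministic function $g$ of $V_P(v_{\sI})$, and both your endpoint and conditioning arguments, which lean on determinism of $g$, fail.

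The paper's proof avoids both pitfalls by using the full consistency property (\Cref{prop:consistency-po}) rather than determinism: once one conditions on $V_j(v_{\sI}) = \tilde{v}_j$, the entire vector $V(v_{\sI})$ may be replaced by $V(v_{\sI}, \tilde{v}_j)$, and the $\sI'$-world global Markov hypothesis applies directly. With this device the case $j \in V_{\sM}$ is the \emph{easiest} (a two-line computation), the case $j \in V_{\sK} \cup V_{\sL}$ first strengthens the assumed m-separation so as to move $V_{\sM \cap \ch(j)}$ out of the conditioning set, and the remaining case reduces to the endpoint case by adjoining $V_j$ to whichever of $V_{\sK}, V_{\sL}$ it is m-separated from.
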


We will abbreviate $\ch_{\gG}(V_j)$ as $\ch(V_j)$ below. The following
observations will be useful in our proof of \Cref{lem:preserve-GM}:
  \begin{enumerate}
  \item[(i)] We have $V_k(v_{\sI}) = V_k(v_{\sI'})$ for any $V_k \not \in
    \ch(V_j)$.
  \item[(ii)] $\gG(v_{\sI})$ has all the edges in $\gG(v_{\sI'})$ (after
    relabeling the vertices using $V_k(v_{\sI'}) \mapsto
    V_k(v_{\sI})$) and additionally
    the edges $V_j(v_{\sI}) \rdedge V_{\ch(j)}(v_{\sI})$.
  \item[(iii)] It follows from the previous observation that any m-separation
    in $\gG(v_{\sI})$ also holds $\gG(v_{\sI'})$ (after
    relabeling the vertices using $V_k(v_{\sI'}) \mapsto
    V_k(v_{\sI})$).
  \item[(iv)] There are no edges like $V_{\ch(j)}(v_{\sI}) \rdedge
    \ast$ (as $V_{\ch(j)} \subseteq V_{\sI}$ by assumption).
  \end{enumerate}

To prove \Cref{lem:preserve-GM}, it suffices to show that for all
disjoint $V_{\sK}, V_{\sL}, V_{\sM} \subset V$,
  \begin{equation}
    \label{eq:preserve-GM-1}
    \textnot V_{\sK}(v_{\sI}) \mconn V_{\sL}(v_{\sI}) \mid
    V_{\sM}(v_{\sI}) \ingraph{\gG(v_{\sI})} \Longrightarrow
    V_{\sK}(v_{\sI}) \independent V_{\sL}(v_{\sI}) \mid
    V_{\sM}(v_{\sI}) \underdist{\P}.
  \end{equation}

We will prove \eqref{eq:preserve-GM-1} by considering three separate
cases.

\begin{lemma} \label{lem:preserve-GM-3}
  Under the assumptions in \Cref{lem:preserve-GM},
  the implication in \eqref{eq:preserve-GM-1} is true if $V_j \in
  V_{\sM}$.
\end{lemma}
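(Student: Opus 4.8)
The plan is to exploit that $V_j$ sits in the conditioning set and that, by the hypothesis $\de_{\gG}(V_j) \subseteq V_{\sI}$, all of $V_j$'s children are already clamped in $\gG(v_{\sI})$; this is what lets me move $V_j$ onto the data side and finish with the weak-union graphoid axiom. Write $V_{\sM} = (V_{\sM \setminus \{j\}}, V_j)$. First I would record, using observation (iii), that the hypothesised m-separation transfers from $\gG(v_{\sI})$ to the sparser graph $\gG(v_{\sI'})$: any walk that exploits a deleted edge $V_j \rdedge V_{\ch(j)}$ meets $V_j$ as a non-collider, and since $V_j \in V_{\sM}$ such a walk is already blocked, so deleting these edges creates no new m-connections.

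The core step is to upgrade the target $V_{\sK}(v_{\sI}) \independent V_{\sL}(v_{\sI}) \mid V_{\sM}(v_{\sI})$ to the one-sided statement $V_{\sL}(v_{\sI}) \independent (V_{\sK}(v_{\sI}), V_j(v_{\sI})) \mid V_{\sM \setminus \{j\}}(v_{\sI})$ (or its mirror image, with the roles of $\sK$ and $\sL$ exchanged). Granting either, the weak-union axiom immediately returns the target by folding $V_j$ back into the conditioning set. The one-sided statement is precisely an instance of \eqref{eq:preserve-GM-1} in which $V_j$ has been moved to the data side, so it is covered by the two companion cases of the case analysis (those where $V_j$ is a data vertex or is absent from all three sets); in establishing it one uses observation (i) together with $\de_{\gG}(V_j) \subseteq V_{\sI}$ to identify every vertex other than $V_j$ across the two potential-outcome worlds, consistency (\Cref{prop:consistency-po}) to match the conditional laws on each slice $V_j(v_{\sI}) = v_j$, and the global Markov property of $\gG(v_{\sI'})$ furnished by the hypothesis of \Cref{lem:preserve-GM}.

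The main obstacle is the m-separation bookkeeping around $V_j$'s clamped children. A child $V_c \in V_{\ch(j)} \subseteq V_{\sI}$ that happens to lie in $V_{\sM}$ is, under the SWIG convention, an ordinary conditioned vertex, so the walk $V_{\sK} \cdots \rdedge V_j \rdedge V_c \ldedge \cdots V_{\sL}$ has its collider at $V_c$ opened and the naive one-sided m-separation fails verbatim. The resolution is that $V_c$ is a degenerate (constant) coordinate of $V(v_{\sI})$, so deleting the clamped vertices from the conditioning set is probabilistically vacuous while closing exactly these spurious colliders. The delicate point I would then have to verify is the purely graph-theoretic claim that, once the clamped conditioning vertices are discarded, at least one of the two one-sided m-separations genuinely holds in $\gG(v_{\sI})$; this is where acyclicity and $\de_{\gG}(V_j) \subseteq V_{\sI}$ enter, since they guarantee that $V_j$ can reach the opposite side only through such clamped colliders, leaving the other side truly m-separated from $V_j$.
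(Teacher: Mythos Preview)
Your approach has a genuine gap. The graph-theoretic claim you flag as ``the delicate point''---that at least one of the two one-sided m-separations holds once $V_j$ is moved out of the conditioning set---is simply false. Take $\gG$ with three vertices and edges $V_j \rdedge C$, $V_j \rdedge C'$, set $V_{\sI} = \{C,C'\} = \de_{\gG}(V_j)$, $V_{\sK} = \{C\}$, $V_{\sL} = \{C'\}$, $V_{\sM} = \{V_j\}$. The hypothesised m-separation $\textnot C \mconn C' \mid V_j$ holds in $\gG(v_{\sI})$ (the unique walk $C \ldedge V_j \rdedge C'$ has $V_j$ as a blocked non-collider), yet the single edges $V_j \rdedge C$ and $V_j \rdedge C'$ witness that \emph{both} one-sided m-separations fail. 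Your diagnosis of the obstacle looks only at children of $V_j$ lying in $V_{\sM}$ and misses this more basic failure mode where the children lie in $V_{\sK}$ and $V_{\sL}$.

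Your proposed resolution is also incorrect in this paper's setup: the graph $\gG(v_{\sI})$ here deliberately omits the fixed-vertex copies of the Richardson--Robins SWIG (see the footnote where $\gG(v_{\sI})$ is defined), so $V_c(v_{\sI})$ for $c \in \sI$ is a genuine random variable, not a constant, and dropping it from the conditioning set is not probabilistically vacuous. By contrast, the paper's proof is a three-line direct argument that avoids all of this: conditioning on $V_j(v_{\sI}) = \tilde{v}_j$ and applying \Cref{prop:consistency-po} replaces every $V_k(v_{\sI})$ by $V_k(v_{\sI},\tilde{v}_j)=V_k(v_{\sI'})$; observation (iii) then lets the global Markov hypothesis for $\gG(v_{\sI'})$ drop $V_{\sL}$; a second application of consistency returns to the $v_{\sI}$ world. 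No one-sided m-separation, no weak union, and no appeal to the companion lemmas is needed.
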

\begin{proof}
  For any $\tilde{v} \in \mathbb{V}$, we have
  \begin{align*}
    &\p(V_{\sK}(v_{\sI}) = \tilde{v}_{\sK} \mid V_{\sL}(v_{\sI}) =
      \tilde{v}_{\L}, V_{\sM}(v_{\sI}) = \tilde{v}_{\sM}) \\
    =& \p(V_{\sK}(v_{\sI}, \tilde{v}_j) = \tilde{v}_{\sK} \mid
       V_{\sL}(v_{\sI}, \tilde{v}_j) =
      \tilde{v}_{\L}, V_{\sM}(v_{\sI}, \tilde{v}_j) = \tilde{v}_{\sM}) \\
    =& \p(V_{\sK}(v_{\sI}, \tilde{v}_j) = \tilde{v}_{\sK} \mid
       V_{\sM}(v_{\sI}, \tilde{v}_j) = \tilde{v}_{\sM}) \\
    =& \p(V_{\sK}(v_{\sI}) = \tilde{v}_{\sK} \mid
       V_{\sM}(v_{\sI}) = \tilde{v}_{\sM}),
  \end{align*}
  where the first and third equalities follow from the consistency
  property (\Cref{prop:consistency-po}) and the assumption that
  $V_j \in V_{\sM} $, and the second equality follows from the
  induction hypothesis and observation (iii).
\end{proof}

\begin{lemma} \label{lem:preserve-GM-1}
  Under the assumptions in \Cref{lem:preserve-GM},
  the implication in \eqref{eq:preserve-GM-1} is true if $V_j \in
  V_{\sK} \cup V_{\sL}$.
\end{lemma}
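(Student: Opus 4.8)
The plan is to reduce everything to the already-established \Cref{lem:preserve-GM-3}, supplemented by the induction hypothesis and the single-world Markov property \eqref{eq:independent-basic-po}. Since the m-separation hypothesis and the conclusion of \eqref{eq:preserve-GM-1} are symmetric under exchanging $V_{\sK}$ and $V_{\sL}$, and the case condition $V_j \in V_{\sK}\cup V_{\sL}$ is symmetric too, I may assume $V_j \in V_{\sK}$; write $\sK^{-} = \sK \setminus \{j\}$. First I would record the consequences of $\de_{\gG}(V_j) \subseteq V_{\sI}$: every child of $V_j$ lies in $V_{\sI}$ and is therefore held fixed in $\gG(v_{\sI})$, so combining observation (i) with the fact that a fixed vertex takes a constant value gives $V_k(v_{\sI}) = V_k(v_{\sI'})$ almost surely for every $k \neq j$. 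Moreover, because a lone directed edge is m-connecting regardless of the conditioning set, the hypothesis $\textnot V_{\sK}(v_{\sI}) \mconn V_{\sL}(v_{\sI}) \mid V_{\sM}(v_{\sI}) \ingraph{\gG(v_{\sI})}$ already forces $V_{\sL} \cap \ch(V_j) = \emptyset$.

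Next I would split the target statement through the chain rule for conditional independence into the two claims
\begin{align}
  V_j(v_{\sI}) &\independent V_{\sL}(v_{\sI}) \mid V_{\sM}(v_{\sI}) \underdist{\P}, \label{eq:sketch-gm1-i}\\
  V_{\sK^{-}}(v_{\sI}) &\independent V_{\sL}(v_{\sI}) \mid V_{\sM}(v_{\sI}), V_j(v_{\sI}) \underdist{\P}, \label{eq:sketch-gm1-ii}
\end{align}
which together yield $V_{\sK}(v_{\sI}) \independent V_{\sL}(v_{\sI}) \mid V_{\sM}(v_{\sI})$ by contraction. For \eqref{eq:sketch-gm1-ii} I would first check the separation $\textnot V_{\sK^{-}} \mconn V_{\sL} \mid V_{\sM}, V_j$ in $\gG(v_{\sI})$: a connecting path avoiding $V_j$ would already witness $V_{\sK^{-}} \mconn V_{\sL} \mid V_{\sM}$ and hence $V_{\sK} \mconn V_{\sL} \mid V_{\sM}$, while a connecting path through $V_j$ must have $V_j$ as a collider (it is in the conditioning set), so its sub-path from $V_j$ to $V_{\sL}$ would witness the forbidden $V_j \mconn V_{\sL} \mid V_{\sM}$; either case contradicts the hypothesis. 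With this separation in hand, \eqref{eq:sketch-gm1-ii} is precisely an instance of \Cref{lem:preserve-GM-3} applied with conditioning set $V_{\sM} \cup \{V_j\}$, so that $V_j$ now occupies the $V_{\sM}$-slot.

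The remaining step \eqref{eq:sketch-gm1-i}, with $V_j$ on the independence side, is the genuine difficulty. Here the usual device of passing to $\gG(v_{\sI'})$ is powerless: since $V_j$ is childless in $\gG(v_{\sI})$, intervening on it alters no other potential outcome, so the induction hypothesis on $\gG(v_{\sI'})$ only reproduces statements about $V_{-j}(v_{\sI})$ and says nothing about the random variable $V_j(v_{\sI})$ itself. Instead I would work at the level of the exogenous randomness: using \eqref{eq:basic-consistency}, express $V_j(v_{\sI})$ as a measurable function of its parents $V_{\pa_{\gG}(j)}(v_{\sI})$ and of the basic potential outcome at $V_j$, and then combine (a) the induction hypothesis, which furnishes the global Markov property for $V_{-j}(v_{\sI})$ and thereby controls how the parents of $V_j$ relate to $V_{\sL}(v_{\sI})$ given $V_{\sM}(v_{\sI})$, with (b) the single-world Markov property \eqref{eq:independent-basic-po}, which renders the exogenous information at $V_j$ independent of that generating $V_{\sL}(v_{\sI})$ except along bidirected edges incident to $V_j$. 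The separation $\textnot V_j \mconn V_{\sL} \mid V_{\sM}$ is exactly what ensures that neither the directed route through the parents nor the bidirected route through the district of $V_j$ can carry dependence once $V_{\sM}(v_{\sI})$ is conditioned upon. I expect this bookkeeping---tracking which exogenous quantities each potential outcome depends on and checking that m-separation blocks all of them---to be the main obstacle, and the point where consistency and \eqref{eq:independent-basic-po} must be invoked with the greatest care.
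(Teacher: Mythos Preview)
Your split into \eqref{eq:sketch-gm1-i} and \eqref{eq:sketch-gm1-ii} is sound, and the reduction of \eqref{eq:sketch-gm1-ii} to \Cref{lem:preserve-GM-3} is correct. There is, however, a real gap at \eqref{eq:sketch-gm1-i}. First, the preliminary claim that $V_k(v_{\sI}) = V_k(v_{\sI'})$ for \emph{every} $k \neq j$ is false: in this paper's SWIG the intervened vertices are still random (see the footnote after the definition of $\gG(v_{\sI})$), and observation (i) only gives the equality for $k \notin \ch(j)$. Second, the exogenous-randomness sketch for \eqref{eq:sketch-gm1-i} faces two concrete obstacles that your outline does not address: the induction hypothesis concerns $V(v_{\sI'})$, which disagrees with $V(v_{\sI})$ exactly on $\ch(j)$, and children of $V_j$ may well sit inside $V_{\sM}$, so the hypothesis says nothing directly about the conditioning event $V_{\sM}(v_{\sI}) = \tilde v_{\sM}$; and \eqref{eq:independent-basic-po} is an \emph{unconditional} statement about fully-intervened basic potential outcomes, so turning it into a \emph{conditional} independence under a partial intervention $v_{\sI}$ is essentially the whole lemma over again.

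The paper sidesteps this entirely and never isolates $V_j$. The key move you are missing is to first strengthen the m-separation by pushing $V_{\sM \cap \ch(j)}$ from the conditioning set over to the side that contains $V_j$ (your $V_{\sK}$; the paper's $V_{\sL}$); this is the step \eqref{eq:preserve-GM-2}, proved by the same kind of collider argument you used for \eqref{eq:sketch-gm1-ii}. Once the conditioning set is $V_{\sM \setminus \ch(j)}$, one writes the conditional density of the side \emph{not} containing $V_j$ given everything else; now $V_j(v_{\sI})=\tilde v_j$ appears in the conditioning, so consistency replaces every $V_k(v_{\sI})$ by $V_k(v_{\sI},\tilde v_j)$, the induction hypothesis on $\gG(v_{\sI'})$ applies, and to come back one uses observation (i) only on the non-$\ch(j)$ variables that remain (namely $V_{\sL}$ and $V_{\sM\setminus \ch(j)}$ in your labeling). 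Weak union then recovers the original conditioning set. This single consistency-then-induction pass replaces your incomplete step \eqref{eq:sketch-gm1-i}.
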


\begin{proof}
  By symmetry, it suffices to prove \eqref{eq:preserve-GM-1} when $V_j
  \in V_{\sL}$.
  First, we claim that the m-separation in \eqref{eq:preserve-GM-1} implies
  \begin{equation}
    \label{eq:preserve-GM-2}
     \textnot V_{\sK}(v_{\sI}) \mconn V_{\sL}(v_{\sI}), V_{\sM \cap
       \ch(j)}(v_{\sI}) \mid V_{\sM \setminus \ch(j)}(v_{\sI})
     \ingraph{\gG(v_{\sI})}.
  \end{equation}
  We prove this claim by contradiction.
  Suppose \eqref{eq:preserve-GM-2} is not true, so there exists $V_m \in
  V_{\sL} \cup V_{\mathcal{M} \cap \ch(j)}$ such that
  \[
    V_{\sK}(v_{\sI}) \mconn V_m(v_{\sI}) \mid V_{\sM \setminus
      \ch(j)}(v_{\sI}) \ingraph{\gG(v_{\sI})}.
  \]
  First, note that by observation (iv), if a vertex in
  $V_{\ch(j)}(v_{\sI})$ is a non-endpoint in a walk, it is a
  collider. Thus, the $V_m \in
  V_{\sL}$ case
  gives an immediate contradiction with the m-separation in
  \eqref{eq:preserve-GM-1}, so $V_m \in V_{\mathcal{M} \cap
    \ch(j)}$. Again, by using observation (iv) and the
  fact that $V_{\ch(j)}(v_{\sI})$ can only be colliders, we know
  \[
    V_{\sK}(v_{\sI}) \halfsquigfull \ast \fullsquigfull V_m(v_{\sI})
    \mid V_{\sM \setminus
      \{m\}}(v_{\sI}) \ingraph{\gG(v_{\sI})}.
  \]
  Because $m \in \ch(j)$, this shows
  \[
    V_{\sK}(v_{\sI}) \halfsquigfull \ast \fullsquigfull V_m(v_{\sI})
    \ldedge V_j(v_{\sI}) \mid V_{\sM}(v_{\sI})
    \ingraph{\gG(v_{\sI})}.
  \]
  This again contradicts the m-separation in
  \eqref{eq:preserve-GM-1}.

  Using observation (iii), \eqref{eq:preserve-GM-2} implies that
  \[
     \textnot V_{\sK}(v_{\sI'}) \mconn V_{\sL}(v_{\sI'}), V_{\sM \cap
       \ch(j)}(v_{\sI'}) \mid V_{\sM \setminus \ch(j)}(v_{\sI'})
    \ingraph{\gG(v_{\sI'})}
  \]
  So by the global Markov property of $\marg_{V(v_{\sI'})}(\P)$, we
  have
  \begin{equation}
    \label{eq:preserve-GM-3}
         V_{\sK}(v_{\sI'}) \independent V_{\sL}(v_{\sI'}), V_{\sM \cap
       \ch(j)}(v_{\sI'}) \mid V_{\sM \setminus \ch(j)}(v_{\sI'})
                      \underdist{\P}.
  \end{equation}
  Next we show that the same conditional independence for potential
  outcomes under $v_{\sI}$ is also true. We have, for any $\tilde{v}
  \in \mathbb{V}$,
  \begin{align*}
    &\p(V_{\sK}(v_{\sI}) = \tilde{v}_{\sK} \mid V_{\sL}(v_{\sI}) =\tilde{v}_{\sL}, V_{\sM \cap
       \ch(j)}(v_{\sI}) = \tilde{v}_{\sM \cap \ch(j)}, V_{\sM
       \setminus \ch(j)}(v_{\sI}) = \tilde{v}_{\sM \setminus \ch(j)})
    \\
    =&\p(V_{\sK}(v_{\sI}, \tilde{v}_j) = \tilde{v}_{\sK} \mid
       V_{\sL}(v_{\sI}, \tilde{v}_j) =\tilde{v}_{\sL}, V_{\sM \cap
       \ch(j)}(v_{\sI},\tilde{v}_j) = \tilde{v}_{\sM \cap \ch(j)}, V_{\sM
       \setminus \ch(j)}(v_{\sI},\tilde{v}_j) = \tilde{v}_{\sM
       \setminus \ch(j)}) \\
    =&\p(V_{\sK}(v_{\sI}, \tilde{v}_j) = \tilde{v}_{\sK} \mid
       V_{\sM
       \setminus \ch(j)}(v_{\sI},\tilde{v}_j) = \tilde{v}_{\sM
       \setminus \ch(j)}) \\
    =&\p(V_{\sK}(v_{\sI}) = \tilde{v}_{\sK} \mid
       V_{\sM
       \setminus \ch(j)}(v_{\sI}) = \tilde{v}_{\sM
       \setminus \ch(j)}),
  \end{align*}
the first equality follows from consistency of potential outcomes
(\Cref{prop:consistency-po}) and the assumption that $V_j \in V_{\sL}$,
the second equality follows from \eqref{eq:preserve-GM-3}, and the
last equality follows from observation (i) (the m-separation in
\eqref{eq:preserve-GM-1} implies that $V_j(v_{\sI}) \nordedge
V_{\sK}(v_{\sI})$). This shows that
\[
  V_{\sK}(v_{\sI}) \independent V_{\sL}(v_{\sI}), V_{\sM \cap
       \ch(j)}(v_{\sI}) \mid V_{\sM \setminus \ch(j)}(v_{\sI})
                      \underdist{\P},
\]
which immediately implies the conditional independence in
\eqref{eq:preserve-GM-1} by the weak union property of conditional
independence.
\end{proof}

\begin{lemma} \label{lem:preserve-GM-2}
  Under the assumptions in \Cref{lem:preserve-GM},
  the implication in \eqref{eq:preserve-GM-1} is true if $V_j \not \in
  V_{\sK} \cup V_{\sL} \cup V_{\sM}$.
\end{lemma}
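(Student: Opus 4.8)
The plan is to reduce the case $V_j \notin V_{\sK} \cup V_{\sL} \cup V_{\sM}$ to the already-established \Cref{lem:preserve-GM-1} by ``absorbing'' $V_j$ into one of the outer sets, splitting according to how $V_j(v_{\sI})$ connects to $V_{\sK}(v_{\sI})$ and $V_{\sL}(v_{\sI})$ in $\gG(v_{\sI})$. Concretely, I would distinguish three exhaustive cases: (P1) $\textnot V_{\sK}(v_{\sI}) \mconn V_j(v_{\sI}) \mid V_{\sM}(v_{\sI}) \ingraph{\gG(v_{\sI})}$; (P2) $\textnot V_{\sL}(v_{\sI}) \mconn V_j(v_{\sI}) \mid V_{\sM}(v_{\sI}) \ingraph{\gG(v_{\sI})}$; and (P3) $V_j(v_{\sI})$ is m-connected to both $V_{\sK}(v_{\sI})$ and $V_{\sL}(v_{\sI})$ given $V_{\sM}(v_{\sI})$.

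In case (P1), since set m-separation means that \emph{no} pair of vertices is m-connected, the hypothesis of \eqref{eq:preserve-GM-1} together with the (P1) assumption gives $\textnot V_{\sK}(v_{\sI}) \mconn V_{\sL}(v_{\sI}), V_j(v_{\sI}) \mid V_{\sM}(v_{\sI}) \ingraph{\gG(v_{\sI})}$. Now $V_j$ sits on the right-hand side, so \Cref{lem:preserve-GM-1} applies with $V_{\sL}$ replaced by $V_{\sL} \cup \{V_j\}$ and yields $V_{\sK}(v_{\sI}) \independent V_{\sL}(v_{\sI}), V_j(v_{\sI}) \mid V_{\sM}(v_{\sI}) \underdist{\P}$; the decomposition property of conditional independence then gives exactly \eqref{eq:preserve-GM-1}. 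Case (P2) is symmetric, absorbing $V_j$ into $V_{\sK}$ instead.

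The substantive case is (P3), where neither absorption preserves m-separation. Here I would first show that $\ch(V_j) \cap (V_{\sK} \cup V_{\sL} \cup V_{\sM}) = \emptyset$. Fix unblocked walks $w_K$ from $V_{\sK}$ to $V_j$ and $w_L$ from $V_j$ to $V_{\sL}$, which exist by (P3). If some $V_c \in \ch(V_j)$ were in $V_{\sK}$, then the edge $V_j \rdedge V_c$ (present in $\gG(v_{\sI})$ since $V_j \notin V_{\sI}$), traversed from $V_c$ to $V_j$, leaves $V_j$ with a tail; concatenating it with $w_L$ keeps $V_j$ a non-collider, and since $V_j \notin V_{\sM}$ this is an unblocked walk $V_{\sK} \mconn V_{\sL} \mid V_{\sM}$, contradicting \eqref{eq:preserve-GM-1}; symmetrically no child lies in $V_{\sL}$. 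If some $V_c \in \ch(V_j)$ were in $V_{\sM}$, then the ``bounce'' $V_j \rdedge V_c \ldedge V_j$ forms an open collider at $V_c \in V_{\sM}$ flanked by non-colliders at $V_j$ (whatever the end-marks of $w_K$ and $w_L$ at $V_j$), so splicing $w_K$, the bounce, and $w_L$ produces an unblocked walk from $V_{\sK}$ to $V_{\sL}$ given $V_{\sM}$, again a contradiction. With no child of $V_j$ in any set, observation (i) gives the almost-sure identities $V_{\sK}(v_{\sI}) = V_{\sK}(v_{\sI'})$, $V_{\sL}(v_{\sI}) = V_{\sL}(v_{\sI'})$, and $V_{\sM}(v_{\sI}) = V_{\sM}(v_{\sI'})$; the m-separation transfers to $\gG(v_{\sI'})$ by observation (iii), the induction hypothesis $\marg_{V(v_{\sI'})}(\P) \in \statmodelGM(\gG(v_{\sI'}), \mathbb{V})$ gives $V_{\sK}(v_{\sI'}) \independent V_{\sL}(v_{\sI'}) \mid V_{\sM}(v_{\sI'}) \underdist{\P}$, and substituting the identities yields \eqref{eq:preserve-GM-1}.

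The hard part will be the combinatorial no-child claim in (P3): it is the only step that genuinely exploits the walk-based (rather than path-based) notion of m-separation, the fact that the children of $V_j$ are sinks in $\gG(v_{\sI})$ (observation (iv)), and the bounce construction that reactivates a collider at a conditioned child. I expect the two easy cases to be a few lines each once \Cref{lem:preserve-GM-1} is available, so this is the step I would write out carefully, making sure the spliced walk is unblocked at every non-endpoint regardless of the orientations with which $w_K$ and $w_L$ meet $V_j$.
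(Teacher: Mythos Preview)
Your proposal is correct and follows essentially the same strategy as the paper: reduce to \Cref{lem:preserve-GM-1} by absorbing $V_j$ into whichever of $V_{\sK}$, $V_{\sL}$ is m-separated from it, and handle the remaining situation by showing that no child of $V_j$ lies in $V_{\sK}\cup V_{\sL}\cup V_{\sM}$ so that observation (i) applies directly. The only difference is the order of the case split---the paper first branches on whether $V_j$ has a child in $V_{\sK}\cup V_{\sL}\cup V_{\sM}$ and then argues that a child forces one side to be m-separated from $V_j$, whereas you branch on the m-connection status first and then argue that (P3) forces the absence of such a child; these are contrapositives of one another, and your bounce argument for the $V_c\in V_{\sM}$ case is exactly the construction the paper invokes (somewhat more tersely) when it ``appends the edge $V_j \rdedge V_{\sK}\cup V_{\sL}\cup V_{\sM}$''.
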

\begin{proof}
  If $V_j \nordedge V_{\sK} \cup V_{\sL} \cup V_{\sM}$, then the
  implication in \eqref{eq:preserve-GM-1} immediately follows from the
  global Markov property of $\marg_{V(v_{\mathcal{I}'})}(\P)$ and
  observation (i). We now assume $V_j \rdedge V_{\sK} \cup V_{\sL}
  \cup V_{\sM}$.

  We claim that
  \begin{equation}
    \label{eq:preserve-GM-4}
    \textnot V_{\sK}(v_{\sI}) \mconn V_{\sL}(v_{\sI}) \mid
    V_{\sM}(v_{\sI}), V_j(v_{\sI}) \ingraph{\gG(v_{\sI})},
  \end{equation}
  Otherwise by the m-separation in \eqref{eq:preserve-GM-1}, we have
  \[
    V_{\sK}(v_{\sI}) \halfsquigfull \ast \fullsquigfull V_j(v_{\sI})
    \fullsquigfull \ast
    \fullsquighalf V_{\sL}(v_{\sI}) \mid V_{\sM}(v_{\sI}) \ingraph{\gG(v_{\sI})}.
  \]
  By appending the edge $V_j \rdedge V_{\sK} \cup V_{\sL}
  \cup V_{\sM}$, this leads to a contradiction with the m-separation
  in \eqref{eq:preserve-GM-1}.

  Further, we claim that
  \[
    \textnot V_{\sK}(v_{\sI}) \mconn V_j(v_{\sI}) \mid
    V_{\sM}(v_{\sI})\quad\text{or}\quad\textnot V_{\sL}(v_{\sI}) \mconn
    V_j(v_{\sI}) \mid V_{\sM}(v_{\sI}).
  \]
  Otherwise, we have
  \[
    V_{\sK}(v_{\sI}) \mconn V_j(v_{\sI})
    \mconn V_{\sL}(v_{\sI}) \mid V_{\sM}(v_{\sI}).
  \]
  The case where $V_j(v_{\sI})$ is a collider already shown to be impossible
  above. In the other case, all $V_j(v_{\sI})$ in this walk are not
  colliders and it contradicts the m-separation in
  \eqref{eq:preserve-GM-1}. 

  Without loss of generality, let us assume
  \[
    \textnot V_{\sK}(v_{\sI}) \mconn V_j(v_{\sI}) \mid V_{\sM}(v_{\sI}).
  \]
  By composing this with the m-separation in \eqref{eq:preserve-GM-1},
  we obtain
  \[
    \textnot V_{\sK}(v_{\sI}) \mconn V_{\sL \cup \{j\}}(v_{\sI}) \mid
    V_{\sM}(v_{\sI}).
  \]
  It follows from \Cref{lem:preserve-GM-1} that
  \[
    V_{\sK}(v_{\sI}) \independent V_{\sL \cup \{j\}}(v_{\sI}) \mid V_{\sM}(v_{\sI}),
  \]
  which implies the conditional independence in
  \eqref{eq:preserve-GM-1}.
\end{proof}

\subsection{Proof of \Cref{prop:fixing}}
\label{sec:proof-crefpr}

Let us first prove the following graphical result.

\begin{lemma} \label{lem:fixing-graph}
  A vertex $V_j \in V$ is fixable in $\gG \in \ADMGc(V)$ if and only
  if
  \begin{equation}
    \label{eq:fixing-graph}
    \textnot V_j(v_j) \mconn V_{\de_{\gG}(j)}(v_j) \mid
    V_{\nd_{\gG}(j)}(v_j) \ingraph{\gG(v_j)},
  \end{equation}
  where $\nd_{\gG}(j) = [d] \setminus \{j\} \setminus \de_{\gG}(j)$
  collects the indices of the non-descendants of $V_j$ in $\gG$.
\end{lemma}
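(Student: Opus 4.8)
The plan is to unfold the definition of m-separation in the SWIG $\gG(v_j)$ and reduce the claim to the equivalence between (i) the nonexistence of an unblocked walk from $V_j(v_j)$ to $V_{\de_{\gG}(j)}(v_j)$ given $V_{\nd_{\gG}(j)}(v_j)$, and (ii) the absence of any descendant of $V_j$ lying in its own district, i.e.\ $\de_{\gG}(V_j) \cap \dis_{\gG}(V_j) = \emptyset$, which is exactly fixability by the definition in \Cref{sec:nest-mark-prop}. First I would record three structural facts about the SWIG that drive everything: (a) every edge incident to $V_j(v_j)$ carries an arrowhead at $V_j(v_j)$, since intervening on $V_j$ deletes all of its outgoing directed edges; (b) the bidirected edges of $\gG$ are untouched by the SWIG operation, so the district of $V_j(v_j)$ in $\gG(v_j)$ equals $\dis_{\gG}(V_j)$ after relabelling; and (c) every parent of $V_j$ is a non-descendant, by acyclicity. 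Combining (a)--(c) with the walk-based notion of blocking from \Cref{sec:walk-algebra}, on any unblocked walk given $L = V_{\nd_{\gG}(j)}(v_j)$ each non-endpoint non-collider must lie in $\de_{\gG}(j)$ (otherwise it is a non-collider in $L$) and each collider must lie in $\nd_{\gG}(j)$ (otherwise it is a collider outside $L$).

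For the direction ``not fixable $\Rightarrow$ m-connected'' I would exhibit an explicit unblocked walk. Pick $V_k \in \de_{\gG}(V_j) \cap \dis_{\gG}(V_j)$ minimising the length of a bidirected path $V_j \bdedge \cdots \bdedge V_k$; by minimality every intermediate vertex on this path lies in $\dis_{\gG}(V_j)$ but not in $\de_{\gG}(V_j)$, hence in $\nd_{\gG}(j)$. Read in $\gG(v_j)$, this path is a walk from $V_j(v_j)$ to $V_k(v_j) \in V_{\de_{\gG}(j)}(v_j)$ whose only non-endpoints are bidirected colliders, all of which sit in $L$; hence it is unblocked and $V_j(v_j)$ is m-connected to $V_{\de_{\gG}(j)}(v_j)$.

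The converse, ``m-connected $\Rightarrow$ not fixable'', is where the real work lies and is the step I expect to be the main obstacle. Given any unblocked walk $w$ from $V_j(v_j)$, fact (a) forces its first edge to be bidirected: a walk beginning $V_j(v_j) \ldedge V_p(v_j)$ would meet a parent $V_p \in \nd_{\gG}(j)$ which, carrying a tail at $V_p$, is necessarily a non-collider in $L$ and hence blocked. I would then examine the maximal initial bidirected segment $V_j(v_j) = u_0 \bdedge u_1 \bdedge \cdots \bdedge u_t$, noting that each $u_i$ lies in $\dis_{\gG}(V_j)$. Inspecting the terminal vertex $u_t$ in three cases finishes the argument: if $u_t$ is the endpoint of $w$ then $u_t \in \de_{\gG}(V_j) \cap \dis_{\gG}(V_j)$; if $u_t$ is a non-collider then, its incoming edge being bidirected, its outgoing edge must be directed, $u_t \rdedge u_{t+1}$, so $u_t$ is a non-endpoint non-collider lying in $\de_{\gG}(V_j) \cap \dis_{\gG}(V_j)$; and the collider case is shown to be impossible. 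In every admissible case $\de_{\gG}(V_j) \cap \dis_{\gG}(V_j) \neq \emptyset$, so $V_j$ is not fixable.

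The delicate bookkeeping is precisely this collider contradiction, where one must combine the walk-blocking rules with acyclicity. If $u_t$ were a collider, maximality of the bidirected segment forces the next edge to be directed into $u_t$, i.e.\ $u_{t+1} \rdedge u_t$; then $u_{t+1}$ carries a tail at $u_{t+1}$ and is a non-collider, so whether it is an interior vertex or the endpoint of $w$ it must lie in $\de_{\gG}(V_j)$. But $u_{t+1} \rdedge u_t$ together with $u_{t+1} \in \de_{\gG}(V_j)$ yields $V_j \rdpath u_t$, placing the collider $u_t$ in $\de_{\gG}(V_j)$ and contradicting $u_t \in \nd_{\gG}(j)$. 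This is exactly the step that rules out the walk ``escaping upward'' through a parent before it can return to a descendant, and getting that interaction right is the crux of the proof.
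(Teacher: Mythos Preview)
Your proof is correct but proceeds quite differently from the paper's. The paper argues via a short chain of equivalences: because $\{V_j(v_j)\}$, $V_{\de(j)}(v_j)$, and $V_{\nd(j)}(v_j)$ partition the vertex set of $\gG(v_j)$, m-separation given $V_{\nd(j)}(v_j)$ is equivalent to the nonexistence of a collider-connected walk ($\colliderconn$) from $V_j(v_j)$ to $V_{\de(j)}(v_j)$; since $V_j(v_j)$ is childless in $\gG(v_j)$ and, by acyclicity, $V_{\de(j)}$ has no directed edges into $V_j$ or $V_{\nd(j)}$, any such collider-connected walk must in fact be purely bidirected ($\samedist$); the bidirected edges are preserved by the SWIG operation, so this transfers to $\gG$, and one more use of the partition collapses the conditioning to give exactly $\textnot V_j \samedist V_{\de(j)} \ingraph{\gG}$, i.e.\ fixability. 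Your route instead exhibits a witnessing bidirected path in one direction and, in the other, dissects an arbitrary unblocked walk by analysing the terminus of its maximal initial bidirected segment in three cases. Your argument is more self-contained---it never appeals to the general $\mconn \Leftrightarrow \colliderconn$ reduction when conditioning on the complement---at the price of more bookkeeping; the paper's buys brevity by leaning on the walk-algebra machinery of \Cref{sec:walk-algebra}. One small point worth making explicit in your Case~2: you should note that $u_t \neq V_j(v_j)$ (since $V_j(v_j)$ has no outgoing directed edges in the SWIG), which is what forces $u_t \in \de_{\gG}(j)$ rather than merely $u_t \notin \nd_{\gG}(j)$.
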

\begin{proof}
  Because $V_j(v_j)$, $V_{\de(j)}(v_j)$, and
  $V_{\nd(j)}(v_j)$ gives a partition of the vertex set of
  $\gG(v_j)$, the m-separation in \eqref{eq:fixing-graph} is
  equivalent to
  \[
    \textnot V_j(v_j) \colliderconn V_{\de(j)}(v_j) \mid
    V_{\nd(j)}(v_j) \ingraph{\gG(v_j)},
  \]
  which is further equivalent to
  \[
    \textnot V_j(v_j) \samedist V_{\de(j)}(v_j) \mid
    V_{\nd(j)}(v_j) \ingraph{\gG(v_j)}
  \]
  because $V_j(v_j)$ has no children and acyclicity of $\gG$ (so
  $V_{\de(j)}(v_j) \nordedge V_j(v_j), V_{\nd(j)}(v_j)$). By the
  definition of $\gG(v_j)$, the last condition is equivalent to
  \[
    \textnot V_j \samedist V_{\de(j)} \mid V_{\nd(j)} \ingraph{\gG},
  \]
  Again, because $V_j$, $V_{\de(j)}$, and $V_{\nd(j)}$ partition the
  vertex set of $\gG$, this is equivalent to
  \[
    \textnot V_j \samedist V_{\de(j)} \ingraph{\gG},
  \]
  which is exactly what fixability of $V_j$ means.
\end{proof}

We now turn to prove \Cref{prop:fixing}. The consistency property
\eqref{eq:basic-consistency} implies that $V_{\nd(j)}(v_j) =
V_{\nd(j)}$ and $V_j(v_j) = V_j$. So by factorizing the joint density
of $V(v_j)$, we have
\begin{align*}
  &\p(V_j(v_j) = \tilde{v}_j, V_{-j}(v_j) = v_{-j}) \\
  =& \p(V_{\nd(j)} = v_{\nd(j)}) \p(V_j = \tilde{v}_j \mid
     V_{\nd(j)} = v_{\nd(j)}) \p(V_{\de(j)}(v_j) = v_j \mid
     V_{\nd(j)} = v_{\nd(j)}, V_j = \tilde{v}_j) \\
  =& \p(V_{\nd(j)} = v_{\nd(j)}) \p(V_j = \tilde{v}_j \mid
     V_{\nd(j)} = v_{\nd(j)}) \p(V_{\de(j)}(v_j) = v_j \mid
     V_{\nd(j)} = v_{\nd(j)}, V_j = v_j) \\
  =& \p(V_{\nd(j)} = v_{\nd(j)}) \p(V_j = \tilde{v}_j \mid
     V_{\nd(j)} = v_{\nd(j)}) \p(V_{\de(j)} = v_j \mid
     V_{\nd(j)} = v_{\nd(j)}, V_j = v_j),
\end{align*}
where the second equality follows from fixability of $V_j$ and
\Cref{lem:fixing-graph}, and the last equality follows from the
consistency of potential outcomes (\Cref{prop:consistency-po}). By
factorizing $\p(V = v)$ in a similar way and rearranging the terms, we
obtain
\begin{equation}
  \label{eq:fixing-proof}
    \frac{\p(V_j(v_j) = \tilde{v}_j, V_{-j}(v_j) = v_{-j})}{\p(V_j =
      {v}_j, V_{-j} = v_{-j})} = \frac{\p(V_j =
      \tilde{v}_j \mid V_{\nd(j)} = v_{\nd(j)})}{\p(V_j =
      v_j \mid V_{\nd(j)} = v_{\nd(j)})}.
\end{equation}
It is easy to see that
\[
  \textnot V_j \mconn V_{\nd(j) \setminus \mbg(j)} \mid V_{\mbg(j)}
  \ingraph{\gG}.
\]
By \Cref{prop:causal-imply-ordinary-markov}, we have
\[
  V_j \independent V_{\nd(j) \setminus \mbg(j)} \mid V_{\mbg(j)}
  \underdist{\P}.
\]
The conclusion in \Cref{prop:fixing} then immediately follows from
\eqref{eq:fixing-proof}.

\subsection{Proof of \Cref{thm:admg-ne-nm}}
\label{sec:proof-crefthm:-ne}

  It is shown in \textcite[Theorem
  16]{richardsonNestedMarkovProperties2023} that
  $\fix_{V_{\sJ}}(\P_V)$ satisfies the (extended) global Markov
  property in \eqref{eq:cadmg-global-markov} with respect to
  $\tilde{\gG} = \widetilde{\fix}_{V_{\sJ}}(\gG)$ 
  if and only if the following is true: for any topological order
  $\prec$ of $\tilde{\gG}$, $V_k \in V \setminus
  V_{\sJ}$ and ancestral set $L = V_{\sL}$ in $\tilde{\gG}$ such that $V_k
  \in V_{\sL} \subseteq \text{pre}_{\prec}(V_j)$, we have
  \begin{equation}
    \label{eq:local-nested-markov}
    V_k \independent V_{\sL \cup \sJ \setminus (\mbg_{\tilde{\gG}_L}(k)
      \cup \{k\})} \mid
    V_{\mbg_{\tilde{\gG}_L}(k)}~\underdist{\fix_{V_{\sJ}}(\P_V)},
  \end{equation}
  where $\tilde{\gG}_L \in \ADMGc(L, J)$ is the subgraph of
  $\tilde{\gG} \in \ADMGc(V \setminus J, J)$ restricted to the random
  vertex set $L$ and fixed vertex set $J$. This can be viewed as an
  extension of the local Markov property in
  \Cref{sec:ordered-local-markov} that allows fixed vertices
  and extended conditional independence.

  To verify this property, we have
  \begin{align*}
    &\fix_{V_{\sJ} = v_{\sJ}}(\p_V)(v_k \mid v_{\sL \setminus (\sJ \cup
    \{k\})})  \\
    =& \p\left(V_k(v_{\sJ}) = v_k \mid
    V_{\sL \setminus (\sJ \cup \{k\})}(v_{\sJ}) = v_{\sL \setminus
               (\sJ \cup \{k\})}\right) \\
    =& \p\left(V_k(v_{\sJ \cup \sL \setminus \{k\}}) = v_k \mid
    V_{\sL \setminus (\sJ \cup \{k\})}(v_{\sJ \cup \sL \setminus
      \{k\}}) = v_{\sL \setminus (\sJ \cup \{k\})}\right) \\
    =& \p\left(V_k(v_{\sJ \cup \sL \setminus \{k\}}) = v_k \mid
    V_{\dis_{\tilde{\gG}_{L}}(k) \setminus \sJ}(v_{\sJ \cup \sL \setminus
      \{k\}}) = v_{\dis_{\tilde{\gG}_L}(k) \setminus \sJ}\right) \\
    =& \p\left(V_k(v_{\pa(k)}) = v_k \mid
    V_{\dis_{\tilde{\gG}_L}(k) \setminus
      \sJ}(v_{\pa(\dis_{\tilde{\gG}_L}(k) \setminus \sJ)}) =
      v_{\dis_{\tilde{\gG}_L}(k) \setminus \sJ}\right),
  \end{align*}
  where the first equality follows from \eqref{eq:invariance-fixing},
  the second equality follows from the consistency property
  \eqref{eq:consistency}, the third follows from the conditional independence
  \[
    V_k(v_{\sJ \cup \sL \setminus \{k\}}) \independent V_{\sL
      \setminus (\dis_{\tilde{\gG}}(k) \cup \{k\})}(v_{\sJ \cup \sL
      \setminus \{k\}}) \mid V_{\dis_{\tilde{\gG}}(k) \setminus
      \sJ}(v_{\sJ \cup \sL \setminus \{k\}}),
  \]
  which is a consequence of the corresponding m-separation (because
  all parents of $V_k$ and $V_{\dis_{\tilde{\gG}}(k)}$ in $\gG$ belong
  to $\sJ \cup \sL \setminus \{k\}$ as $\sL$ is ancestral) and
  \Cref{prop:causal-imply-ordinary-markov}, and the last equality
  follows from the assumption that $V_{\sL}$ is ancestral. It is not
  hard to see that the right hand side of the last display equation is
  a function of $v_k$ and $v_{\mbg_{\tilde{\gG}_L}(k)}$, so by
  definition the extended conditional independence
  \eqref{eq:local-nested-markov} is true.

\end{document}